\newtheorem{theorem}{Theorem}[section]
\newtheorem{proposition}[theorem]{Proposition}
\newtheorem{lemma}[theorem]{Lemma}
\newtheorem{corollary}[theorem]{Corollary}
\newtheorem{claim}[theorem]{Claim}
\newtheorem{proof}{\textmd{\textit{Proof.}}}
\newtheorem{remark}[theorem]{Remark}
\newtheorem{example}[theorem]{Example}
\newtheorem{definition}[theorem]{Definition}
\newcommand{\qedd}{\hfill \Box}
\newcommand{\ve}{\varepsilon}
\newcommand{\del}{\partial}
\newcommand{\lra}{\longrightarrow}
\renewcommand{\det}{\ensuremath{\mathrm{det}}}
\newcommand{\grad}{\nabla\!\!_-}
\newcommand{\N}{\ensuremath{\mathbb{N}}}
\newcommand{\R}{\ensuremath{\mathbb{R}}}
\newcommand{\bv}{\ensuremath{\mathbf{v}}}
\newcommand{\cF}{\ensuremath{\mathcal{F}}}
\newcommand{\cL}{\ensuremath{\mathcal{L}}}
\newcommand{\cN}{\ensuremath{\mathcal{N}}}
\newcommand{\cP}{\ensuremath{\mathcal{P}}}
\newcommand{\cT}{\ensuremath{\mathcal{T}}}
\newcommand{\bJ}{\ensuremath{\mathbf{J}}}
\newcommand{\CD}{\ensuremath{\mathsf{CD}}}
\def\diam{\mathop{\mathrm{diam}}\nolimits}
\def\vol{\mathop{\mathrm{vol}}\nolimits}
\def\area{\mathop{\mathrm{area}}\nolimits}
\def\div{\mathop{\mathrm{div}}\nolimits}
\def\Hess{\mathop{\mathrm{Hess}}\nolimits}
\def\supp{\mathop{\mathrm{supp}}\nolimits}
\def\Ent{\mathop{\mathrm{Ent}}\nolimits}
\def\Ric{\mathop{\mathrm{Ric}}\nolimits}
\def\Sym{\mathop{\mathrm{Sym}}\nolimits}
\def\loc{\mathop{\mathrm{loc}}\nolimits}
\def\ac{\mathop{\mathrm{ac}}\nolimits}
\def\trace{\mathop{\mathrm{trace}}\nolimits}
\title{Displacement convexity\\ of generalized relative entropies}
\author{Shin-ichi Ohta\thanks{Department of Mathematics, Kyoto University,
Kyoto 606-8502, Japan ({\sf sohta@math.kyoto-u.ac.jp}) \&
Max-Planck-Intitut f\"ur Mathematik, Vivatsgasse 7, 53111 Bonn, Germany;
Supported in part by the Grant-in-Aid for Young Scientists (B) 20740036.}
and Asuka Takatsu\thanks{Graduate School of Mathematics, Nagoya University,
Nagoya 464-8602, Japan ({\sf takatsu@math.nagoya-u.ac.jp}) \&
Institut des Hautes \'Etudes Scientifiques, Bures-sur-Yvette 91440, France;
Supported in part by Research Fellowships of the JSPS for Young Scientists
\& by JSPS-IH\'ES (EPDI) Fellowship.}}
\date{\today}
\begin{document}

\maketitle

\begin{abstract}
We investigate the $m$-relative entropy, which stems from the Bregman divergence,
on weighted Riemannian and Finsler manifolds.
We prove that the displacement $K$-convexity of the $m$-relative entropy
is equivalent to the combination of the nonnegativity of the weighted Ricci curvature
and the $K$-convexity of the weight function.
We use this to show appropriate variants of the Talagrand, HWI and
the logarithmic Sobolev inequalities, as well as the concentration of measures.
We also prove that the gradient flow of the $m$-relative entropy produces
a solution to the porous medium equation or the fast diffusion equation.
\end{abstract}

\tableofcontents

\section{Introduction}

The {\it displacement convexity} of a functional on the space of probability measures
was introduced in McCann's influential paper~\cite{Mc1} as the covexity along geodesics
with respect to the $L^2$-Wasserstein distance.
Recent astonishing development of optimal transport theory reveals that
the displacement convexity of entropy-type functionals plays important roles
in the theory of partial differential equations, probability theory
and differential geometry (see \cite{AGS}, \cite{Vi1}, \cite{Vi2} and the references therein).
For instance, on a compact Riemannian manifold $(M,g)$ equipped with
the Riemannian volume measure $\vol_g$, the gradient flow of the relative entropy
$\Ent_{\vol_g}$ (see $(\ref{eq:rent})$ for definition) in the $L^2$-Wasserstein space
$(\cP(M),W_2)$ produces a weak solution to the heat equation
(\cite{Ogra}, \cite{GO}, \cite[Chapter~23]{Vi2}).
Then the displacement $K$-convexity of $\Ent_{\vol_g}$ for some $K \in \R$
(denoted by $\Hess\Ent_{\vol_g} \ge K$ for short) implies the $K$-contraction property
\[ W_2\big( p(t,x,\cdot)\vol_g,p(t,y,\cdot)\vol_g \big) \le e^{-Kt}d(x,y),
 \qquad x,y \in M, \]
of the heat kernel $p:(0,\infty) \times M \times M \lra (0,\infty)$
(and vice versa, \cite{vRS}), where $d$ is the Riemannian distance.
The condition $\Hess\Ent_{\vol_g} \ge K$ is called the
{\it curvature-dimension condition} $\CD(K,\infty)$ and known to be equivalent
to the lower Ricci curvature bound $\Ric \ge K$ (\cite{vRS}).
There is a rich theory on general metric measure spaces satisfying $\CD(K,\infty)$
(\cite{StI}, \cite{LV2}, \cite[Part~III]{Vi2}).
Especially, $\CD(K,\infty)$ with $K>0$ is an important condition which yields,
among others, the logarithmic Sobolev inequality and the normal concentration
of measures (a kind of large deviation principle).

The curvature-dimension condition is generalized to $\CD(K,N)$
for each $K \in \R$ and $N \in (1,\infty]$, and then $\CD(K,N)$
is equivalent to the lower bound of the weighted Ricci curvature
$\Ric_N \ge K$ of a weighted Riemannian manifold $(M,\omega)$,
where $\omega$ is a conformal deformation of $\vol_g$ (\cite{StII}, \cite{LV1},
see \eqref{eq:RicN} for the definition of $\Ric_N$).
However, $\CD(K,N)$ with $N<\infty$ is written as a simple convexity condition
only when $K=0$ (and it causes some difficulties when $K \neq 0$, see \cite{BS}).
Precisely, $\CD(0,N)$ is defined as the convexity of the R\'enyi entropy
$S_N$ (see $(\ref{eq:Ren})$ for definition),
while $\CD(K,N)$ with $K \neq 0$ is a more subtle inequality
involving the integrand of $S_N$.
Sturm has shown in \cite[Theorem~1.7]{Stcon} that there are (by no means unique)
functionals whose displacement $K$-convexity is equivalent to the combination of
$\Ric \ge K$ and $\dim \le N$ for unweighted Riemannian manifolds,
but it is unclear how this observation relates to $\CD(K,N)$.

In this article, we introduce and consider a different kind of relative entropy
$H_m(\cdot|\nu)$ for $m \in [(n-1)/n,1) \cup (1,\infty)$
---we call this the {\it $m$-relative entropy}---
which is related to, but different from $S_N$.
Here $\nu=\exp_m(-\Psi) \omega$ is a fixed conformal deformation
of $\omega$, and $\exp_m$ is the $m$-exponential function
(see Subsection~\ref{ssc:Nm}).
Our definition of $H_m(\cdot|\nu)$ stems from the Bregman divergence
in information theory/geometry which is closely related to
the Tsallis and R\'enyi entropies (see Subsection~\ref{ssc:ent}).
Roughly speaking, $H_m(\mu|\nu)$ is defined as
\[ H_m(\mu|\nu)=\frac{1}{m(m-1)} \int_M
 \{ \rho^m-m\rho\sigma^{m-1}+(m-1)\sigma^m \} \,d\omega, \]
for $\mu=\rho\omega$ and $\nu=\sigma\omega$
(see Definition~\ref{df:Hm} for the precise definition).
We can regard $H_m(\mu|\nu)$ as representing the difference
between $\mu$ and $\nu$.
Taking the limit as $m$ tends to $1$ recovers the usual relative entropy $\Ent_{\nu}$
(or the Kullback-Leibler divergence $H(\cdot|\nu)$).
Our results will guarantee that $H_m(\cdot|\nu)$ is a natural and important object.

Our first main theorem asserts that $\Hess H_m(\cdot|\nu) \ge K$
in $(\cP^2(M),W_2)$ is equivalent to the combination of $\Ric_N \ge 0$ with
$N=1/(1-m)$ and $\Hess\Psi \ge K$, where $\Ric_N$ is of $(M,\omega)$
(Theorem~\ref{th:mCD}).
We remark that $N$ can be negative, such $\Ric_N$ is not previously studied
and would be of independent interest.
It is also interesting to obtain split curvature bound/convexity conditions
from a single convexity condition of the entropy.
Then, according to the technique similar to the curvature-dimension condition,
we show that $\Ric_N \ge 0$ and $\Hess\Psi \ge K$ imply appropriate variants of
the Talagrand, HWI, logarithmic Sobolev and the global Poincar\'e inequalities
(Propositions~\ref{pr:Ta}, \ref{pr:Poin}, Theorem~\ref{th:LS}),
and also the concentration of measures (Theorem~\ref{th:conc}, Proposition~\ref{pr:conc}).
Furthermore, the gradient flow of $H_m(\cdot|\nu)$ produces a weak solution
to the porous medium equation (for $m>1$)
or the fast diffusion equation (for $m<1$) of the form
\[ \frac{\del \rho}{\del t}
 =\frac{1}{m}\Delta^{\omega}(\rho^m)+\div_{\omega}(\rho\nabla\Psi), \]
where $\Delta^{\omega}$ and $\div_{\omega}$ are the Laplacian and
the divergence associated with the measure $\omega$ (Theorem~\ref{th:gf}).
Among others, we shall follow the metric geometric way of interpreting
this coincidence as in \cite{Ogra}, \cite{GO}.
Most results hold true also for Finsler manifolds thanks to the theory
developed in \cite{Oint} and \cite{OS} (see Section~\ref{sc:Fins}).

We comment on former related work on this kind of entropy.
On unweighted Riemannian manifolds, Sturm showed a similar
characterization of the displacement $K$-convexity of a class of entropies
(or free energies) including $H_m$ (\cite[Theorem~1.3]{Stcon}).
We generalize this to weighted Riemannian (and even Finsler) manifolds,
and then $\Ric$ is replaced with $\Ric_N$ (this is natural but nonobvious).
Also our treatment of singular measures is more precise than \cite{Stcon}.
Gradient flow from the view of Wasserstein geometry has been investigated
by Otto~\cite{Ot} in the Euclidean case, and by Villani~\cite[Chapters~23, 24]{Vi2}
in the weighted Riemannian case in a different manner from ours.
Functional inequalities related to the convexity of the weight $\Psi$
were studied in \cite{AGK}, \cite{CGH} and \cite{Ta2} in Euclidean spaces
(see also \cite[Remark~1.1]{Stcon} and \cite[Chapters~24, 25]{Vi2}).
The concentration of measures seems new even in the Euclidean setting.

The organization of the article is as follows.
After preliminaries, we introduce the $m$-relative entropy $H_m(\cdot|\nu)$
in Section~\ref{sc:Hm}, and show that $\Hess H_m(\cdot|\nu) \ge K$
is equivalent to $\Hess\Psi \ge K$ with $\Ric_N \ge 0$ in Section~\ref{sc:dc}.
Using this equivalence, we obtain several functional inequalities in
Section~\ref{sc:func}, and the concentration of measures in Section~\ref{sc:conc}.
Section~\ref{sc:gf} is devoted to the study of the gradient flow of $H_m(\cdot|\nu)$.
Finally, we treat the Finsler case in Section~\ref{sc:Fins}.

\section{Preliminaries}\label{sc:pre}

Throughout the article except the last section,
$(M,g)$ will be a complete, connected $n$-dimensional $C^{\infty}$-Riemannian manifold
and $d$ stands for the Riemannian distance of $M$.
For simplicity and since we are interested in the role of curvature bounds,
we will always assume $n \ge 2$.
Denote by $B(x,r)$ the open ball of center $x \in M$ and radius $r>0$,
i.e., $B(x,r)=\{ y \in M \,|\, d(x,y)<r \}$.
See, e.g., \cite{Ch} for the basics of Riemannian geometry.

\subsection{Weighted Ricci curvature}

We fix a conformal change $\omega=e^{-\psi}\vol_g$, with $\psi \in C^{\infty}(M)$,
of the Riemannian volume measure $\vol_g$ as our base measure.
Given a unit vector $v \in T_xM$ and $N \in (-\infty,0) \cup (n,\infty)$,
we define the {\it weighted Ricci curvature} by
\begin{equation}\label{eq:RicN}
\Ric_N(v) :=\Ric(v)+\Hess\psi(v,v) -\frac{\langle \nabla\psi,v\rangle^2}{N-n}.
\end{equation}
We also set
\[ \Ric_n(v):= \left\{ \begin{array}{cl}
 \Ric(v)+\Hess\psi(v,v) & {\rm if}\ \langle \nabla\psi,v\rangle=0, \\
 -\infty & {\rm otherwise}.
\end{array} \right. \]
Observe that, if $\psi$ is constant, then $\Ric_N(v)$ coincides with $\Ric(v)$ for all $N$.

\begin{remark}\label{rm:N<0}
We usually consider $\Ric_N$ only for $N \in [n,\infty]$
(where $\Ric_{\infty}(v)=\Ric(v)+\Hess\psi(v,v)$ is the {\it Bakry-\'Emery tensor},
see \cite{BE}, \cite{Qi}, \cite{Lo}),
and then it enjoys the monotonicity: $\Ric_N(v) \le \Ric_{N'}(v)$ for $N<N'$.
Admitting $N<0$ violates this monotonicity,
but we abuse this notation for brevity.
The reason why we consider this range of $N$ will be seen in $(\ref{eq:N})$.
\end{remark}

As we mentioned in the introduction, $\Ric_N \ge K$ for $K \in \R$ and
$N \ge n$ is equivalent to Sturm's {\it curvature-dimension condition} $\CD(K,N)$.
Spaces satisfying $\CD(K,N)$ behave like a space with ``dimension $\le N$
as well as Ricci curvature $\ge K$'' (see \cite{StII}, \cite{LV1}, \cite[Part~III]{Vi2}).

\subsection{Generalized exponential functions and Gaussian measures}\label{ssc:Nm}

We briefly recall the {\it $m$-calculus}, see \cite{Ts2} for further discussion.
We introduce a parameter $m$ such that
\[ m \in [(n-1)/n,1) \cup (1,\infty). \]
We sometimes eliminate the special case $m=1/2$ with $n=2$ (Section~\ref{sc:func})
or restrict ourselves to $m \le 2$ (Sections~\ref{sc:conc}, \ref{sc:gf}).
We set
\begin{equation}\label{eq:N}
N=N(m):=1/(1-m) \in (-\infty,0) \cup [n,\infty).
\end{equation}
Define the {\it $m$-logarithmic function} by
\[ \ln_m(t):=\frac{t^{m-1}-1}{m-1}\quad {\rm for}\
 \left\{ \begin{array}{cl}
 t>0\ & {\rm if}\ m<1, \\ t \ge 0\ & {\rm if}\ m>1.
 \end{array} \right. \]
Note that $\ln_m$ is monotone increasing and that the image of
$\ln_m$ is $(-\infty,1/(1-m))$ if $m<1$; $[-1/(m-1),\infty)$ if $m>1$.
We define the {\it $m$-exponential function} $\exp_m$ as the inverse of $\ln_m$,
namely
\[ \exp_m(t):=\{ 1+(m-1)t \}^{1/(m-1)}\quad {\rm for}\
 \left\{ \begin{array}{cl}
 t \in (-\infty,1/(1-m)) & {\rm if}\ m<1, \\
 t \in [-1/(m-1),\infty) & {\rm if}\ m>1.
\end{array} \right. \]
For the sake of simplicity, we set $\exp_m(t):=0$ for $m>1$ and $t<-1/(m-1)$.
We also define
\[ e_m(t):=t\ln_m(t)=\frac{t^m-t}{m-1}\quad {\rm for}\ t>0, \qquad e_m(0):=0. \]
Observe that
\[ \lim_{m \to 1} \ln_m(t)=\ln(t), \qquad \lim_{m \to 1} \exp_m(t)=e^t,
 \qquad \lim_{m \to 1} e_m(t)=t\ln(t). \]

\begin{remark}\label{rm:q}
(1) Taking $m<1$ and $m>1$ gives rise to qualitatively different phenomena
(see Lemma~\ref{lm:K>0}, Example~\ref{ex:Nm} for instances).
Nonetheless, most of our results will cover both cases.

(2) In some notations, it is common to use the parameter $q=2-m$
instead of $m$ (e.g., $\exp_q$ and $q$-Gaussian measures).
In the present paper, however, we shall use only $m$ for brevity.
\end{remark}

Using $\exp_m$ and the base measure $\omega=e^{-\psi}\vol_g$,
we will fix another measure
\[ \nu=\sigma\omega:= \exp_m(-\Psi)\omega \]
as our reference measure, where $\Psi \in C(M)$ such that
$\Psi>-1/(1-m)$ if $m<1$.
Note that the two weights $e^{-\psi}$ and $\exp_m(-\Psi)$
involve different kinds of exponential function, so that they can not be combined.
For later convenience, we set
\begin{equation}\label{eq:M0}
M_0:= \left\{ \begin{array}{cl}
 M & {\rm for}\ m<1, \\
 \Psi^{-1}\big( (-\infty,1/(m-1)) \big) & {\rm for}\ m>1,
\end{array} \right.
\end{equation}
and assume that $M_0$ is nonempty.
Note that $\supp\nu=\overline{M_0}$ holds in both cases.
We shall study how the convexity of $\Psi$ has an effect on
the geometric and analytic structures of $(M,\nu)$.

\begin{definition}[$K$-convexity]\label{df:Psi}
Given $K \in \R$, we say that $\Psi$ is {\it $K$-convex in the weak sense},
denoted by $\Hess\Psi \ge K$ for short, if any two points $x,y \in M$ admit
a minimal geodesic $\gamma:[0,1] \lra M$ from $x$ to $y$ along which
\begin{equation}\label{eq:Psi}
\Psi\big( \gamma(t) \big) \le (1-t)\Psi(x) +t\Psi(y) -\frac{K}{2}(1-t)td(x,y)^2
\end{equation}
holds for all $t \in [0,1]$.
\end{definition}

Note that this is equivalent to saying that $(\ref{eq:Psi})$ holds
along any minimal geodesic $\gamma$ between $x$ and $y$,
for $\gamma|_{[\ve,1-\ve]}$ is a unique minimal geodesic for all $\ve>0$
and $\Psi$ is continuous.

\begin{remark}\label{rm:multi}
Consider a different presentation
$\nu=(c\sigma)(c^{-1}\omega)=:\tilde{\sigma}\tilde{\omega}$ of $\nu$
for some constant $c>0$.
Then the weighted Ricci curvature $\Ric_N$ is unchanged, while
\begin{align*}
\tilde{\sigma}
&=c\exp_m(-\Psi) =\{ c^{m-1}-(m-1)c^{m-1}\Psi \}^{1/(m-1)} \\
&= \bigg\{ 1-(m-1)\bigg( c^{m-1}\Psi-\frac{c^{m-1}-1}{m-1} \bigg) \bigg\}^{1/(m-1)}
 =: \exp_m(-\widetilde{\Psi})
\end{align*}
and hence $\Hess\widetilde{\Psi}=c^{m-1}\Hess\Psi$.
\end{remark}

Sections~\ref{sc:func}, \ref{sc:conc} will be concerned with the case where
$\Hess\Psi \ge K>0$ as well as $\Ric_N \ge 0$.
In such a situation, it turns out that $\nu$ has finite total mass.
Here we give explicit estimates for later use (in Section~\ref{sc:conc}).

\begin{lemma}\label{lm:K>0}
Assume that $\Hess\Psi \ge K$ holds for some $K>0$,
and take a unique minimizer $x_0 \in M$ of $\Psi$.
\begin{enumerate}[{\rm (i)}]
\item
If $m<1$ and $\Ric_N \ge 0$, then $\sigma \in L^c(M,\omega)$
for all $c \in (1/2,1]$, in particular, $\nu(M)<\infty$.
Moreover, we have
\[ \int_M \sigma^c \,d\omega \le C_1^{1-c}\nu(M)^c
 +C_2K^{c/(m-1)} \]
for some $C_1=C_1(\omega)>0$ and $C_2=C_2(m,c,\omega)>0$.

\item
If $m<1$ and $\Ric_N \ge 0$, then $\int_M d(x_0,x)^p \,d\nu<\infty$
for all $p \in [1,1/(1-m))$.

\item
If $m>1$, then $M_0$ and $\supp\nu$ are convex in the sense
that any pair of points in $M_0$ or $\supp\nu$ is connected by a minimal geodesic
contained in $M_0$ or $\supp\nu$, respectively.
In addition, we have
\[ \supp\nu \subset \overline{B\bigg( x_0,\bigg\{ \frac{2}{K}
 \bigg( \frac{1}{m-1}-\Psi(x_0) \bigg) \bigg\}^{1/2} \bigg)}. \]
\end{enumerate}
\end{lemma}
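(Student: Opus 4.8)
The plan is to derive all three parts from one pointwise estimate on the density $\sigma=\exp_m(-\Psi)$ combined with volume comparison. First, $\Hess\Psi\ge K$ with $K>0$ forces $\Psi$ to attain its infimum: along a minimal geodesic from a fixed $p$ to an endpoint $x_R$ at distance $R$, evaluating $(\ref{eq:Psi})$ at parameter $1/R$ bounds $\Psi$ on $\partial B(p,1)$ by roughly $\Psi(p)+\Psi(x_R)/R-\tfrac K2(1-1/R)R$, which tends to $-\infty$ along a minimizing sequence (as $\Psi(x_R)/R\to0$), contradicting the continuity of $\Psi$ on the compact ball $\overline{B(p,1)}$; hence a minimizing sequence stays bounded and subconverges to $x_0$, unique by strict convexity. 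Letting $t\to0$ in $(\ref{eq:Psi})$ with $x$ replaced by $x_0$ gives $\Psi(x)\ge\Psi(x_0)+\frac{K}{2}d(x_0,x)^2$ for all $x$. Since $\exp_m(-\Psi)=\{1+(1-m)\Psi\}^{1/(m-1)}$ with $1/(m-1)=-N$, this yields, for $m<1$ (so $N\in[n,\infty)$),
\[ \sigma(x)\le\big(A+B\,d(x_0,x)^2\big)^{-N},\qquad A:=1+(1-m)\Psi(x_0)>0,\quad B:=\frac{(1-m)K}{2}>0, \]
while for $m>1$ (so $N<0$, $B<0$) the same manipulation shows $\sigma(x)=0$ as soon as $A+B\,d(x_0,x)^2\le0$.

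For (i) and (ii) take $m<1$, so $\Ric_N\ge0$ is $\CD(0,N)$ with $N\ge n$ and the weighted Bishop--Gromov inequality makes $r\mapsto\omega(B(x_0,r))/r^N$ non-increasing; in particular $\omega(B(x_0,r))\le\omega(B(x_0,1))r^N$ for $r\ge1$. Decomposing $M$ into the dyadic shells $B(x_0,2^{k+1})\setminus B(x_0,2^k)$ and inserting the pointwise bound gives $\nu(M)\le\int_M(A+B\,d(x_0,\cdot)^2)^{-N}\,d\omega<\infty$, the integrand decaying like $d(x_0,\cdot)^{-2N}$ against volume growth of order $N$; this already covers $c=1$. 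For $c\in(1/2,1]$ I split $M=B(x_0,1)\cup(M\setminus B(x_0,1))$: on the ball, H\"older's inequality with exponents $1/c,1/(1-c)$ gives $\int_{B(x_0,1)}\sigma^c\,d\omega\le\nu(M)^c\,\omega(B(x_0,1))^{1-c}$, while on the complement $\sigma^c\le B^{-Nc}d(x_0,\cdot)^{-2Nc}$, whose shell sum $\sum_k2^{-2kNc}\omega(B(x_0,2^{k+1}))\le2^N\omega(B(x_0,1))\sum_k2^{kN(1-2c)}$ converges precisely because $c>1/2$ and is bounded by a multiple of $B^{-Nc}\omega(B(x_0,1))=(2/(1-m))^{Nc}\,\omega(B(x_0,1))\,K^{c/(m-1)}$; assembling the two pieces yields the stated inequality, with $C_1,C_2$ depending on $\omega$ only through $\omega(B(x_0,1))$. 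Part (ii) is the same shell computation with an extra factor $d(x_0,\cdot)^p$: the integrand of $\int_Md(x_0,\cdot)^p\,d\nu\le\int_Md(x_0,\cdot)^p(A+B\,d(x_0,\cdot)^2)^{-N}\,d\omega$ is of order $d(x_0,\cdot)^{p-2N}$, so the shell sum converges exactly when $p-2N+N<0$, i.e. $p<N=1/(1-m)$.

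For (iii), let $m>1$, so $M_0=\{\Psi<1/(m-1)\}$ is a sublevel set of the (in particular) convex function $\Psi$. If $x,y\in M_0$ and $\gamma$ realizes $(\ref{eq:Psi})$, then $\Psi(\gamma(t))\le(1-t)\Psi(x)+t\Psi(y)<1/(m-1)$, so $\gamma\subset M_0$; thus $M_0$ is convex in the stated sense, and $\supp\nu=\overline{M_0}$ inherits this by approximating the endpoints from inside $M_0$ and extracting a limiting minimal geodesic (Arzel\`a--Ascoli). Since $M_0\ne\emptyset$ and $x_0$ minimizes $\Psi$ we have $\Psi(x_0)<1/(m-1)$, and for $x\in M_0$ the quadratic bound gives $\Psi(x_0)+\frac{K}{2}d(x_0,x)^2\le\Psi(x)<1/(m-1)$, i.e. $d(x_0,x)<\{\frac{2}{K}(\frac{1}{m-1}-\Psi(x_0))\}^{1/2}$; taking closures gives the asserted inclusion of $\supp\nu$ in the stated ball.

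The geometric input (weighted Bishop--Gromov) and part (iii) are routine. The one delicate point is the accounting in (i): the H\"older split must be arranged so that the residual integral over $M\setminus B(x_0,1)$ is finite --- this is exactly where the threshold $c>1/2$ is forced --- and so that its dependence on $K$ comes out as the announced power $K^{c/(m-1)}=K^{-Nc}$, together with the minor subtlety that the ``$\omega$-constant'' is really $\omega(B(x_0,1))$ at the minimizer $x_0$.
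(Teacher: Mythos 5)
Your proposal is correct and follows essentially the same route as the paper: the quadratic lower bound $\Psi(x)\ge\Psi(x_0)+\frac K2 d(x_0,x)^2$ giving the pointwise decay of $\sigma=\exp_m(-\Psi)$, Bishop--Gromov under $\Ric_N\ge0$ for the tail (where $c>1/2$ enters), the H\"older inequality on $B(x_0,1)$, and sublevel-set convexity of $\Psi$ for (iii). The only deviation is cosmetic: you use the ball-volume comparison with dyadic shells where the paper integrates the sphere-area comparison $\area_\omega(S(x_0,r))\le r^{N-1}\area_\omega(S(x_0,1))$ in the radial variable, and your explicit existence/coercivity argument for $x_0$ is extra detail the paper leaves implicit.
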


\begin{proof}
By our assumption $\Hess\Psi \ge K>0$, we find a unique point $x_0 \in M_0$
such that $\Psi(x_0)=\inf_M \Psi$.
Then we deduce from $(\ref{eq:Psi})$ that
\[ \Psi\big( \gamma(1) \big)
 \ge \Psi(x_0)+\frac{K}{2}d\big( x_0,\gamma(1) \big)^2 \]
holds for all minimal geodesics $\gamma$ with $\gamma(0)=x_0$.
Thus we have
\begin{equation}\label{eq:K>0}
\sigma(x)=\exp_m\big( -\Psi(x) \big)
 \le \exp_m\bigg( -\Psi(x_0)-\frac{K}{2}d(x_0,x)^2 \bigg)
\end{equation}
for all $x \in M_0$.

(i) Denote by $\area_{\omega}(S(x_0,r))$ the area of the sphere
$S(x_0,r):=\{ x \in M \,|\, d(x_0,x)=r \}$ with respect to $\omega$.
Then $(\ref{eq:K>0})$ implies
\[ \int_M \sigma^c \,d\omega \le \int_{B(x_0,1)} \sigma^c \,d\omega
 +\int_1^{\infty} \exp_m\bigg( -\Psi(x_0)-\frac{K}{2}r^2 \bigg)^c
 \area_{\omega}\big( S(x_0,r) \big) \,dr. \]
On the one hand, it follows from $\Ric_N \ge 0$ that, for $r \ge 1$,
\[ \area_{\omega}\big( S(x_0,r) \big)
 \le r^{N-1} \area_{\omega}\big( S(x_0,1) \big)
 =r^{m/(1-m)} \area_{\omega}\big( S(x_0,1) \big) \]
(cf.\ \cite[Theorem~2.3]{StII}).
Therefore we obtain, putting $a:=\exp_m(-\Psi(x_0))^{m-1}>0$,
\begin{align*}
&\int_1^{\infty} \exp_m\bigg( -\Psi(x_0)-\frac{K}{2}r^2 \bigg)^c
 \area_{\omega}\big( S(x_0,r) \big) \,dr \\
&\le \area_{\omega}\big( S(x_0,1) \big) \int_1^{\infty}
 \bigg\{ a+(1-m)\frac{K}{2}r^2 \bigg\}^{c/(m-1)} r^{m/(1-m)} \,dr \\
&= \area_{\omega}\big( S(x_0,1) \big) \int_1^{\infty}
 \bigg\{ ar^{-2}+(1-m)\frac{K}{2} \bigg\}^{c/(m-1)} r^{(m-2c)/(1-m)} \,dr \\
&\le \area_{\omega}\big( S(x_0,1) \big) \bigg\{ (1-m)\frac{K}{2} \bigg\}^{c/(m-1)}
 \int_1^{\infty} r^{(m-2c)/(1-m)} \,dr.
\end{align*}
As $c>1/2$, the most right-hand side coincides with
\[ \area_{\omega}\big( S(x_0,1) \big) \frac{(1-m)^{c/(m-1)+1}}{2c-1}
 \bigg( \frac{K}{2} \bigg)^{c/(m-1)}
 =:C_2(m,c,\omega) K^{c/(m-1)} <\infty. \]
On the other hand, as $\nu(M)<\infty$ is already observed,
the H\"older inequality and $c \le 1$ yield
\[ \int_{B(x_0,1)} \sigma^c \,d\omega
 \le \bigg( \int_{B(x_0,1)} \sigma \,d\omega \bigg)^c \omega\big( B(x_0,1) \big)^{1-c}
 \le \nu(M)^c \omega\big( B(x_0,1) \big)^{1-c}. \]
We set $C_1(\omega)=\omega(B(x_0,1))$ and complete the proof.

(ii) We similarly deduce from $\eqref{eq:K>0}$ and $\Ric_N \ge 0$ that
\begin{align*}
&\int_{M \setminus B(x_0,1)} d(x_0,x)^p \,d\nu(x) \\
&\le \int_1^{\infty} r^p \exp_m\bigg( -\Psi(x_0)-\frac{K}{2}r^2 \bigg)
 \area_{\omega}\big( S(x_0,r) \big) \,dr \\
&\le \area_{\omega}\big( S(x_0,1) \big) \bigg\{ (1-m)\frac{K}{2} \bigg\}^{1/(m-1)}
 \int_1^{\infty} r^{p+(m-2)/(1-m)} \,dr \\
&= \area_{\omega}\big( S(x_0,1) \big)
 \frac{(1-m)^{m/(m-1)}}{1-(1-m)p} \bigg( \frac{K}{2} \bigg)^{1/(m-1)}
 <\infty.
\end{align*}
We used $p<1/(1-m)$ to see $p+(m-2)/(1-m)<-1$.

(iii) Recall that $M_0=\Psi^{-1}((-\infty,1/(m-1)))$ and $\supp\nu=\overline{M_0}$.
Therefore $M_0$ and $\supp\nu$ are convex and
$(\ref{eq:K>0})$ shows the desired estimate.
$\qedd$
\end{proof}

Observe that the convexity of $M_0$ and $\supp\nu$ in Lemma~\ref{lm:K>0}(iii)
holds true also for $K=0$.

\begin{example}[$m$-Gaussian measures]\label{ex:Nm}
One fundamental and important example to which Lemma~\ref{lm:K>0} applies
is the {\it $m$-Gaussian measure} on $\R^n$ defined by
\begin{equation}\label{eq:Nm}
N_m(v,V)=\sigma dx := \frac{C_0}{(\det V)^{1/2}}
 \exp_m \bigg[ -\frac{C_1}{2}\langle x-v,V^{-1}(x-v) \rangle \bigg] dx,
\end{equation}
where $dx$ is the Lebesgue measure, a vector $v \in \R^n$ is the mean,
a positive-definite symmetric matrix $V \in \Sym^+(n,\R)$ is the covariance matrix,
and $C_0,C_1$ are positive constants depending only on $n$ and $m$ (see \cite{Ta2}).
Then clearly $\Hess\Psi =C_0^{m-1}(\det V)^{(1-m)/2} \cdot C_1V^{-1}$
(by taking Remark~\ref{rm:multi} into account) and hence
\[ \Hess\Psi \ge C_0^{m-1}C_1(\det V)^{(1-m)/2}\Lambda^{-1}>0, \]
where $\Lambda$ denotes the largest eigenvalue of $V$.
Note that $N_m(v,V)$ has unbounded and bounded support for
$m<1$ and $m>1$, respectively.
The family of $m$-Gaussian measures will play interesting roles in
Sections~\ref{sc:Hm}, \ref{sc:func}, \ref{sc:gf}.
\end{example}

\subsection{Wasserstein geometry}

We very briefly recall some fundamental facts in optimal transport theory and Wasserstein geometry.
We refer to \cite{Vi1}, \cite{Vi2} for basics as well as recent diverse development
of them.

Let $(X,d)$ be a complete, separable metric space.
A rectifiable curve $\gamma:[0,1] \lra X$ is called a {\it geodesic}
if it is locally minimizing and has a constant speed,
we say that $\gamma$ is {\it minimal} if it is globally minimizing
(i.e., $d(\gamma(s),\gamma(t))=|s-t|d(\gamma(0),\gamma(1))$
for all $s,t \in [0,1]$).
If any two points in $X$ is connected by a minimal geodesic,
then $(X,d)$ is called a {\it geodesic space}.

We denote by $\cP(X)$ the set of all Borel probability measures on $X$,
and by $\cP^p(X) \subset \cP(X)$ with $p \ge 1$ the subset consisting of
measures $\mu$ of finite $p$-th moment, that is,
$\int_X d(x,y)^p \,d\mu(y) <\infty$ for some (and hence all) $x \in X$.
Clearly $\cP^p(X)=\cP(X)$ if $X$ is bounded.
Given $\mu,\nu \in \cP(X)$, a probability measure $\pi \in \cP(X \times X)$
is called a {\it coupling} of $\mu$ and $\nu$ if its projections coincides with $\mu$ and $\nu$,
namely $\pi(A \times X)=\mu(A)$ and $\pi(X \times A)=\nu(A)$
hold for any Borel set $A \subset X$.
We define the {\it $L^p$-Wasserstein distance} between $\mu,\nu \in \cP^p(X)$ by
\[ W_p(\mu,\nu):=\inf_{\pi}
 \bigg( \int_{X \times X} d(x,y)^p \,d\pi(x,y) \bigg)^{1/p}, \]
where $\pi$ runs over all couplings of $\mu$ and $\nu$.
We call $\pi$ an {\it optimal coupling} if it attains the infimum above.
We remark that $W_p(\mu,\nu)$ is finite since $\mu,\nu \in \cP^p(X)$,
and it is indeed a distance of $\cP^p(X)$.
The metric space $(\cP^p(X),W_p)$ is called the
{\it $L^p$-Wasserstein space} over $X$.
If $X$ is compact, then $(\cP(X),W_p)$ is also compact and the topology
induced from $W_p$ coincides with the weak topology.

We will consider only the case of $p=2$ that is suitable and important
for applications in Riemannian geometry.
A minimal geodesic between $\mu,\nu \in \cP^2(X)$ amounts to
an optimal way of transporting $\mu$ to $\nu$ with respect to the quadratic cost $d(x,y)^2$.
Then it is natural to expect that such an optimal transport is performed along
minimal geodesics in $X$, that is indeed the case as seen in the following proposition.
We denote by $\Gamma(X)$ the set of all minimal geodesics
$\gamma:[0,1] \lra X$ endowed with the topology induced from the distance
$d_{\Gamma(X)}(\gamma,\eta):=\sup_{t \in [0,1]}d(\gamma(t),\eta(t))$.
For $t \in [0,1]$, define the {\it evaluation map} $e_t:\Gamma(X) \lra X$
as $e_t(\gamma):=\gamma(t)$, and observe that each $e_t$ is $1$-Lipschitz.

\begin{proposition}{\rm (\cite[Corollary~7.22]{Vi2})}\label{pr:LV}
Let $(X,d)$ be a locally compact geodesic space.
Then, for any $\mu,\nu \in \cP^2(X)$ and any minimal geodesic
$\alpha:[0,1] \lra \cP^2(X)$ between them, there exists $\Pi \in \cP(\Gamma(X))$
such that $(e_0 \times e_1)_{\sharp}\Pi$ is an optimal coupling of $\mu$ and $\nu$
and that $(e_t)_{\sharp}\Pi=\alpha(t)$ holds for all $t \in [0,1]$.
\end{proposition}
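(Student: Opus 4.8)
\emph{Proof proposal.} The plan is to obtain $\Pi$ as a weak limit of measures on $\Gamma(X)$ produced by iterated midpoint gluing along dyadic subdivisions of $[0,1]$, with compactness supplied by the fact that a complete, locally compact geodesic space is \emph{proper} (Hopf--Rinow): closed bounded subsets of $X$ are compact. Two preliminary remarks. First, by Arzel\`a--Ascoli, any family of minimal geodesics whose endpoints range over a fixed compact set and whose speeds are uniformly bounded is relatively compact in $(\Gamma(X),d_{\Gamma(X)})$. Second, the endpoint map $\Gamma(X)\to X\times X$, $\gamma\mapsto(\gamma(0),\gamma(1))$, is continuous and onto, with nonempty compact fibres forming a measurable set-valued map; hence by the Kuratowski--Ryll-Nardzewski selection theorem there is a Borel map $S\colon X\times X\to\Gamma(X)$ with $S(x,y)$ a minimal geodesic from $x$ to $y$.

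The gluing step: let $\rho$ be a $W_2$-midpoint of $\mu_0,\mu_1\in\cP^2(X)$ (so $W_2(\mu_0,\rho)=W_2(\rho,\mu_1)=\tfrac12 W_2(\mu_0,\mu_1)$), pick optimal couplings $\pi_0$ of $(\mu_0,\rho)$ and $\pi_1$ of $(\rho,\mu_1)$, and glue them along the common marginal $\rho$ to get $\hat\pi\in\cP(X^3)$ with $(p_1,p_2)_\sharp\hat\pi=\pi_0$ and $(p_2,p_3)_\sharp\hat\pi=\pi_1$. Then
\[
W_2(\mu_0,\mu_1)\le\bigl\|d(x_1,x_3)\bigr\|_{L^2(\hat\pi)}
\le\bigl\|d(x_1,x_2)+d(x_2,x_3)\bigr\|_{L^2(\hat\pi)}
\le\bigl\|d(x_1,x_2)\bigr\|_{L^2(\hat\pi)}+\bigl\|d(x_2,x_3)\bigr\|_{L^2(\hat\pi)}
=W_2(\mu_0,\mu_1),
\]
so every inequality is an equality: the first says $(p_1,p_3)_\sharp\hat\pi$ is an optimal coupling of $\mu_0,\mu_1$, while the triangle-inequality and Minkowski equalities (together with $\|d(x_1,x_2)\|_{L^2(\hat\pi)}=\|d(x_2,x_3)\|_{L^2(\hat\pi)}$) force $\hat\pi$-a.e.\ $x_2$ to be a midpoint of $x_1,x_3$. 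Pushing $\hat\pi$ forward along $(x_1,x_2,x_3)\mapsto$ the concatenation of $S(x_1,x_2)$ on $[0,\tfrac12]$ and $S(x_2,x_3)$ on $[\tfrac12,1]$ (a minimal geodesic, since its length is $d(x_1,x_2)+d(x_2,x_3)=d(x_1,x_3)$) gives a measure on $\Gamma(X)$ with evaluations $\mu_0,\rho,\mu_1$ at $0,\tfrac12,1$ and optimal $(e_0\times e_1)$-marginal.

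Iterating this gluing along successive dyadic refinements — each interior value $\alpha(j/2^{n})$ being a $W_2$-midpoint of its neighbours because $\alpha$ is a minimal geodesic in $(\cP^2(X),W_2)$ — yields, for every $n$, a measure $\Pi_n\in\cP(\Gamma(X))$ supported on minimal geodesics, with $(e_t)_\sharp\Pi_n=\alpha(t)$ for all dyadic $t$ of level $\le n$ and with $(e_0\times e_1)_\sharp\Pi_n$ an optimal coupling of $\mu,\nu$ of cost $W_2(\mu,\nu)^2$. To extract a limit I would verify tightness: given $\ve>0$, choose a compact $C\subset X$ with $\mu(C)>1-\ve$ and $L>0$ with $W_2(\mu,\nu)^2/L^2<\ve$; since the common $(e_0\times e_1)$-marginal has cost $W_2(\mu,\nu)^2$, Chebyshev gives $\Pi_n(\{\gamma:\gamma(0)\in C,\ d(\gamma(0),\gamma(1))\le L\})>1-2\ve$ for all $n$, and on that set each geodesic stays in the compact set $\{x\in X:d(x,C)\le L\}$ with speed $\le L$, so by the first preliminary remark the set is relatively compact in $\Gamma(X)$. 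Passing to a weakly convergent subsequence $\Pi_{n_j}\to\Pi$, continuity of $e_t$ gives $(e_t)_\sharp\Pi=\alpha(t)$ for every dyadic $t$, and then for all $t\in[0,1]$ since both sides are $W_2$-continuous; finally $(e_0\times e_1)_\sharp\Pi$ is a coupling of $\mu,\nu$ whose cost is $\le\liminf_j W_2(\mu,\nu)^2=W_2(\mu,\nu)^2$ by lower semicontinuity of the quadratic cost, hence is optimal.

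The main obstacle is not any single estimate but the careful combination of the measurable geodesic selection with the (multi-factor) gluing: one must check that, after refining to level $n$, consecutive points remain equally spaced along a minimal geodesic — so that optimality of the endpoint marginal persists — and that the resulting maps into $\Gamma(X)$ are Borel. Properness of $X$, which is exactly what local compactness plus completeness buys via Hopf--Rinow, is the ingredient that makes both the selection and the tightness go through; without it the argument breaks down.
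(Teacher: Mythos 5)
Your argument is correct and is essentially the standard proof of this statement: the paper itself does not prove Proposition~\ref{pr:LV} but quotes it from \cite[Corollary~7.22]{Vi2}, and the proof given there proceeds exactly along your lines (properness of $X$ via Hopf--Rinow, measurable selection of geodesics, gluing optimal couplings along dyadic subdivisions with the equality case of the triangle/Minkowski inequalities forcing equal spacing along minimal geodesics, then tightness and a weak limit in $\cP(\Gamma(X))$). The points you flag as needing care --- Borel measurability of the concatenated selection and persistence of the endpoint optimality under refinement --- are indeed the only delicate steps, and your outline handles them correctly.
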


We denoted by $(e_t)_{\sharp}\Pi$ the push-forward measure of $\Pi$ by $e_t$.
In Riemannian manifolds, a more precise description of an optimal transport
using a gradient vector field of some kind of convex function is known.
We first recall McCann's original work on compact Riemannian manifolds.
Denote by $\cP_{\ac}(M,\vol_g) \subset \cP(M)$ the subset of absolutely continuous
measures with respect to the volume measure $\vol_g$.
We also set $\cP_{\ac}^2(M,\vol_g):=\cP^2(M) \cap \cP_{\ac}(M,\vol_g)$.

\begin{theorem}{\rm (\cite[Theorems~8, 9]{Mc2})}\label{th:Mc}
Let $(M,g)$ be a compact Riemannian manifold.
Then, for any $\mu \in \cP_{\ac}(M,\vol_g)$ and $\nu \in \cP(M)$,
there exists a $(d^2/2)$-convex function $\varphi:M \lra \R$ such that the map
$\cT_t(x):=\exp_x(t\nabla\varphi(x))$, $t \in [0,1]$, provides a unique
minimal geodesic from $\mu$ to $\nu$.
Precisely, $(\cT_0 \times \cT_1)_{\sharp}\mu$ is an optimal coupling
of $\mu$ and $\nu$, and $\mu_t=(\cT_t)_{\sharp}\mu$ is a minimal geodesic
from $\mu_0=\mu$ to $\mu_1=\nu$ with respect to $W_2$.
\end{theorem}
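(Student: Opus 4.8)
The plan is to obtain everything from Kantorovich duality for the cost $c(x,y):=d(x,y)^{2}/2$, combined with the uniform semiconcavity of the squared distance function on a compact manifold. \textbf{Step 1 (duality and the potential).} Since $c$ is continuous and bounded on the compact space $M\times M$, the Kantorovich problem of transporting $\mu$ to $\nu$ has an optimal coupling $\pi$, and duality supplies an optimal dual pair; normalizing the source--side potential to be $c$-conjugate, I would write it (in the sign convention under which the map below matches the statement) as $\varphi(x)=\sup_{y\in M}\{w(y)-d(x,y)^{2}/2\}$ for a bounded $w$, so that $\varphi$ is $(d^{2}/2)$-convex, $\varphi(x)+d(x,y)^{2}/2\ge w(y)$ everywhere, and equality holds for every $(x,y)\in\supp\pi$; since $\varphi$ and $w$ are \emph{optimal} dual potentials, this complementary slackness in fact holds on the support of \emph{every} optimal coupling. \textbf{Step 2 (regularity of $\varphi$).} On a compact manifold each $z\mapsto d(z,y)^{2}/2$ is Lipschitz and semiconcave with data controlled uniformly in $y$ --- this is the geometric input, coming from second--variation/Rauch comparison and valid even across the cut locus. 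Locally $\varphi$ is therefore a supremum of uniformly Lipschitz, uniformly semiconvex functions, hence locally Lipschitz and locally semiconvex; by Rademacher's (or Alexandrov's) theorem $\varphi$ is differentiable $\vol_{g}$-a.e., and since $\mu\ll\vol_{g}$ it is differentiable $\mu$-a.e.

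\textbf{Step 3 (the map and the cut locus).} Take $(x,y)\in\supp\pi$ with $\varphi$ differentiable at $x$; this covers $\pi$-a.e.\ $(x,y)$ because the first marginal of $\pi$ is $\mu$. By Step 1 the function $z\mapsto\varphi(z)+d(z,y)^{2}/2$ attains its minimum over $M$ at $z=x$, and differentiating $\varphi$ at $x$ shows that $-\nabla\varphi(x)$ lies in the subdifferential of $z\mapsto d(z,y)^{2}/2$ at $x$. But a semiconcave function whose subdifferential at a point is nonempty is differentiable there (its super- and sub-gradients then coincide and are unique); hence $d(\cdot,y)^{2}/2$ is differentiable at $x$, which for $x\ne y$ means precisely $x\notin\mathrm{cut}(y)$, and then $\nabla_{x}[d(\cdot,y)^{2}/2]=-\exp_{x}^{-1}(y)$, so $y=\exp_{x}(\nabla\varphi(x))$ is uniquely determined by $x$. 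Writing $\cT_{1}(x):=\exp_{x}(\nabla\varphi(x))$, we get $\pi=(\mathrm{id}\times\cT_{1})_{\sharp}\mu$, hence $\nu=(\cT_{1})_{\sharp}\mu$ and $\cT_{1}$ solves Monge's problem; applying the same argument on the support of an arbitrary optimal coupling shows that the optimal coupling, and $\varphi$ up to an additive constant on $\supp\mu$, are unique.

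\textbf{Step 4 (geodesic interpolation and uniqueness of the geodesic).} Set $\cT_{t}(x):=\exp_{x}(t\nabla\varphi(x))$ and $\mu_{t}:=(\cT_{t})_{\sharp}\mu$. For $\mu$-a.e.\ $x$ one has $x\notin\mathrm{cut}(\cT_{1}(x))$, so $t\mapsto\cT_{t}(x)$ is the \emph{unique} minimal geodesic in $M$ from $x$ to $\cT_{1}(x)$, of length $|\nabla\varphi(x)|=d(x,\cT_{1}(x))$, whence $d(\cT_{s}(x),\cT_{t}(x))=|t-s|\,|\nabla\varphi(x)|$. Using $(\cT_{s}\times\cT_{t})_{\sharp}\mu$ as a competitor and $\int_{M}|\nabla\varphi|^{2}\,d\mu=W_{2}(\mu,\nu)^{2}$ (optimality of $\cT_{1}$), one obtains $W_{2}(\mu_{s},\mu_{t})\le|t-s|\,W_{2}(\mu,\nu)$ for all $s,t\in[0,1]$, and the triangle inequality forces equality; thus $(\mu_{t})_{t\in[0,1]}$ is a constant-speed minimal geodesic from $\mu$ to $\nu$ and $(\cT_{0}\times\cT_{1})_{\sharp}\mu$ is optimal. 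For uniqueness of the geodesic I would invoke Proposition~\ref{pr:LV}: any minimal geodesic between $\mu$ and $\nu$ is $(e_{t})_{\sharp}\Pi$ for some $\Pi\in\cP(\Gamma(M))$ whose endpoint marginal is an optimal coupling; by Step 3 that coupling is $(\mathrm{id}\times\cT_{1})_{\sharp}\mu$, and since for $\mu$-a.e.\ $x$ the geodesic joining $x$ to $\cT_{1}(x)$ is unique, $\Pi$ must be the image of $\mu$ under $x\mapsto\bigl(t\mapsto\cT_{t}(x)\bigr)$, so the geodesic coincides with $(\mu_{t})$.

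I expect Step 3 to be the crux, and its geometric heart is the pair of facts that $d(\cdot,y)^{2}/2$ is uniformly semiconcave across the cut locus and that a semiconcave function with nonempty subdifferential at a point is differentiable there: together they forbid optimal transport from pushing mass through cut points and pin $\cT_{1}$ down as a single-valued map. The secondary technical points are checking semiconcavity/semiconvexity of $\varphi$ as a non-locally-finite supremum in Step 2, and the bookkeeping with optimal dual potentials that delivers the uniqueness statements in Steps 3--4.
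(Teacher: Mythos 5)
This statement is not proved in the paper at all: it is imported verbatim from McCann's work (\cite{Mc2}, Theorems~8 and~9), so there is no in-paper argument to compare against. Your proposal is, in essence, a correct reconstruction of McCann's own proof (as also presented in \cite{CMS} and \cite[Chapters~5, 7, 10]{Vi2}): Kantorovich duality on the compact space $M\times M$ yields a $(d^2/2)$-convex potential $\varphi$; uniform semiconcavity and Lipschitz continuity of $d(\cdot,y)^2/2$ make $\varphi$ Lipschitz and semiconvex, hence differentiable $\vol_g$-a.e.\ and so $\mu$-a.e.\ since $\mu \in \cP_{\ac}(M,\vol_g)$; the complementary-slackness/first-order argument pins down $y=\exp_x(\nabla\varphi(x))$ on the support of every optimal coupling, giving uniqueness of the coupling; and the interpolation $\mu_t=(\cT_t)_\sharp\mu$ together with the restriction property of Wasserstein geodesics (your use of Proposition~\ref{pr:LV}) gives that $(\mu_t)$ is the unique minimal geodesic. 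All four steps are sound, and the division of labor (duality, regularity, cut-locus lemma, interpolation) is exactly the canonical one; what the paper "buys" by citing instead of proving is simply brevity.

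One point to tighten in Step~3: differentiability of $z\mapsto d(z,y)^2/2$ at $x$ is equivalent to the existence of a \emph{unique} minimizing geodesic from $x$ to $y$, not quite to $x\notin\mathrm{cut}(y)$ --- a first conjugate point reached by a single minimizing geodesic still lies in the cut locus, yet $d(\cdot,y)^2/2$ is (first-order) differentiable there. This does not damage your argument, because everything you actually use --- the formula $\nabla_x[d(\cdot,y)^2/2]=-\exp_x^{-1}(y)$, the identity $y=\exp_x(\nabla\varphi(x))$, and the uniqueness of the geodesic $t\mapsto\exp_x(t\nabla\varphi(x))$ in Step~4 --- holds precisely at points of differentiability, i.e.\ under uniqueness of the minimizing geodesic; just replace the cut-locus phrasing accordingly. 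Similarly, the parenthetical claim that $\varphi$ is unique up to an additive constant on $\supp\mu$ needs extra care (connectedness issues) but is never used, since uniqueness of the optimal coupling plus $\mu$-a.e.\ uniqueness of geodesics in $M$ already forces uniqueness of the Wasserstein geodesic.
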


See \cite[Chapter~5]{Vi2} for the definition of the {\it $(d^2/2)$-convex function},
here we only remark that it is semi-convex in compact spaces.
Such convexity is important as it implies the almost everywhere
twice differentiability (due to the Alexandrov-Bangert theorem),
and is generalized to noncompact spaces in \cite{FG}.

\begin{theorem}{\rm (\cite[Theorem~1]{FG})}\label{th:FG}
Let $(M,g)$ be a complete Riemannian manifold.
Then, for any $\mu \in \cP^2_{\ac}(M,\vol_g)$ and $\nu \in \cP^2(M)$,
there exists a locally semi-convex function $\varphi:\Omega \lra \R$
on an open set $\Omega \subset M$ with $\mu(\Omega)=1$
such that the map $\cT_t(x):=\exp_x(t\nabla\varphi(x))$,
$t \in [0,1]$, provides a unique minimal geodesic from $\mu$ to $\nu$
$($in the sense of Theorem~$\ref{th:Mc})$.
\end{theorem}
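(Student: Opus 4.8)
The plan is to run Kantorovich duality for the cost $c(x,y)=d(x,y)^2/2$, extract a Kantorovich potential, upgrade it to the locally semi-convex function $\varphi$ generating the optimal map, and take the displacement interpolation as the geodesic. \emph{Step 1: duality and a finite potential.} Since $\mu,\nu\in\cP^2(M)$, the Monge--Kantorovich problem with cost $c$ admits an optimal coupling $\pi$ and a $c$-concave potential $\phi$ with $c$-transform $\psi=\phi^c$, so that $\phi(x)+\psi(y)\le c(x,y)$ everywhere, with equality $\pi$-a.e. As $M$ is noncompact, $\phi,\psi$ need not be everywhere finite or locally bounded, so I would localize: exhaust $M$ by balls, use the tightness coming from finiteness of the second moments of $\mu$ and $\nu$, and check that $\phi(x)=\inf_y\{c(x,y)-\psi(y)\}$ is finite with the infimum attained over a locally bounded range of target points $y$, on an open set $\Omega$ with $\mu(\Omega)=1$.

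\emph{Step 2: local semi-convexity and the optimal map.} For fixed $y$, the function $x\mapsto d(x,y)^2/2$ is smooth, hence locally semi-concave, on $M\setminus\mathrm{Cut}(y)$, with a modulus of semi-concavity locally uniform in $y$ on compacta; since on $\Omega$ the infimum defining $\phi$ runs over a locally bounded set of $y$'s, $\phi$ is locally semi-concave on $\Omega$, i.e., $\varphi:=-\phi$ is locally semi-convex there, and by the Alexandrov--Bangert theorem $\varphi$ is twice differentiable $\vol_g$-a.e., hence $\mu$-a.e. If $x\in\Omega$ is such a point, matched by $\pi$ to some $y$, a standard argument using $c$-cyclical monotonicity of $\supp\pi$ gives $x\notin\mathrm{Cut}(y)$; then $\nabla_x c(x,y)=-\exp_x^{-1}(y)$, and differentiating the equality $\phi(x)=c(x,y)-\psi(y)$ (which holds with ``$\le$'' nearby) yields $y=\exp_x(\nabla\varphi(x))=:\cT_1(x)$. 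Since $\mu\ll\vol_g$ this determines $y$ for $\mu$-a.e.\ $x$, so $\pi=(\cT_0\times\cT_1)_\sharp\mu$ is the unique optimal coupling and $\nu=(\cT_1)_\sharp\mu$, where $\cT_0=\mathrm{id}$.

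\emph{Step 3: the geodesic and its uniqueness.} Set $\mu_t:=(\cT_t)_\sharp\mu$. As in Theorem~\ref{th:Mc}, $c$-cyclical monotonicity together with the triangle inequality gives $W_2(\mu_s,\mu_t)=|s-t|W_2(\mu,\nu)$, so $t\mapsto\mu_t$ is a minimal geodesic in $(\cP^2(M),W_2)$. For uniqueness, $\mu\ll\vol_g$ forces each $\cT_t$ with $t\in(0,1)$ to be injective $\mu$-a.e.\ (again by $c$-monotonicity and the cut-locus-free property), hence $\mu_t\ll\vol_g$ and the optimal coupling between $\mu$ and $\nu$ is unique. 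Given any minimal geodesic $\alpha$ from $\mu$ to $\nu$, Proposition~\ref{pr:LV} provides $\Pi\in\cP(\Gamma(M))$ with $(e_0\times e_1)_\sharp\Pi$ optimal --- hence equal to $(\cT_0\times\cT_1)_\sharp\mu$ --- and $(e_t)_\sharp\Pi=\alpha(t)$; disintegrating $\Pi$ along $e_0$ identifies its geodesics as $t\mapsto\exp_x(t\nabla\varphi(x))$, so $\alpha(t)=\mu_t$.

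\emph{Main obstacle.} The delicate part is Steps 1--2 over a noncompact manifold: proving that the Kantorovich potential is genuinely finite and locally semi-convex on a full $\mu$-measure open set with locally uniform semi-concavity constants, together with the accompanying fact that an optimal plan never charges the cut locus --- this is exactly what legitimizes the pointwise identity $y=\exp_x(\nabla\varphi(x))$ for $\mu$-a.e.\ $x$.
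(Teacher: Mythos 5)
This statement is not proved in the paper at all: it is quoted verbatim from Figalli--Gigli \cite{FG}, so the only fair comparison is with the argument of that cited work, whose overall strategy (Kantorovich duality, semi-concavity of $d(\cdot,y)^2/2$, Alexandrov--Bangert, displacement interpolation) your sketch correctly reproduces. The problem is that what you defer in Steps 1--2 is exactly the content of \cite{FG}, not a routine localization. Finiteness of the second moments of $\mu$ and $\nu$ does \emph{not} imply that the infimum $\varphi$-defining contact points $y$ range over a locally bounded set as $x$ varies in a small ball: on a noncompact manifold the $c$-concave potential can fail to be locally semi-concave (equivalently, $\varphi$ locally semi-convex) on all of $M$, and ``tightness'' gives no pointwise control on where the $c$-superdifferential $\del^c\phi(x)$ lives. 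The actual work in \cite{FG} is to construct the open set $\Omega$ (essentially the interior of the set where the contact set is nonempty, with a compactness argument showing the contact points are locally bounded there, which is what yields locally uniform semi-concavity moduli), and then to prove $\mu(\Omega)=1$ using that the optimal plan is concentrated on the graph of $\del^c\phi$ together with $\mu \ll \vol_g$. As written, your Step 1 asserts the conclusion of that analysis rather than proving it, so the ``main obstacle'' you flag is a genuine gap, not a technicality.

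The rest of the outline is essentially sound and standard: once $\varphi$ is locally semi-convex on a full-measure open set, twice differentiability $\mu$-a.e.\ follows from Alexandrov--Bangert, the differentiability of $\varphi$ at $x$ plus the everywhere-valid local semi-concavity of $d(\cdot,y)^2/2$ forces $x \notin \mathrm{Cut}(y)$ and $y=\exp_x(\nabla\varphi(x))$, and uniqueness of the geodesic then follows as you say, because $\pi$-a.e.\ pair $(x,y)$ is joined by a unique minimal geodesic, so the lift $\Pi$ in Proposition~\ref{pr:LV} is determined and every Wasserstein geodesic from $\mu$ to $\nu$ coincides with $\mu_t=(\cT_t)_{\sharp}\mu$. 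One small correction along the way: $d(\cdot,y)^2/2$ is locally semi-concave on all of $M$, uniformly for $y$ in compacta, not merely on $M\setminus\mathrm{Cut}(y)$; this stronger fact is what makes the sub/super-differentiability argument at the contact point work.
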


We will also use the following {\it Jacobian} (or {\it Monge-Amper\`e}) {\it equation}.

\begin{theorem}{\rm (\cite[Theorems~8.7, 11.1]{Vi2})}\label{th:MA}
Let $(M,g)$ be complete and $\mu$, $\nu$, $\varphi$, $\Omega$ and $\cT_t$
be as in Theorem~$\ref{th:FG}$ above.
Put
\[ \bJ_t^{\omega}(x):=e^{\psi(x)-\psi(\cT_t(x))} \det\big( D\cT_t(x) \big) \]
for $x \in \Omega$ and $t \in [0,1)$.
Then it holds $\mu_t \in \cP^2_{\ac}(M,\vol_g)$ and
$(\rho_t \circ \cT_t) \bJ^{\omega}_t =\rho_0$ $\mu_0$-a.e.\
for all $t \in [0,1)$, where we set $\mu_t=(\cT_t)_{\sharp}\mu=\rho_t \omega$.
In particular, $\bJ^{\omega}_t >0$ $\mu_0$-a.e.\ for each $t \in [0,1)$.
If in addition $\nu \in \cP_{\ac}^2(M,\vol_g)$, then the above assertions
hold also at $t=1$.
\end{theorem}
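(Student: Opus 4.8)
The plan is to reduce this weighted statement to the classical (unweighted) Monge--Amp\`ere equation, which is exactly \cite[Theorems~8.7, 11.1]{Vi2}, and then absorb the conformal factor $e^{-\psi}$ by an elementary change of density. First I would write each measure in terms of its density with respect to $\vol_g$: set $\mu=\rho_0^g\vol_g$ and $\mu_t=\rho_t^g\vol_g$, so that $\rho_0^g=\rho_0 e^{-\psi}$ and $\rho_t^g=\rho_t e^{-\psi}$, where $\rho_t$ is the density of $\mu_t$ with respect to $\omega=e^{-\psi}\vol_g$. Since $\mu\in\cP^2_{\ac}(M,\vol_g)$ by hypothesis and $\cT_t(x)=\exp_x(t\nabla\varphi(x))$ with $\varphi$ locally semi-convex on $\Omega$, Theorem~\ref{th:FG} identifies $\cT_t$ as the optimal map, and the theory of \cite[Chapters~8, 11]{Vi2} then provides: the approximate differentiability of $\cT_t$ at $\mu_0$-a.e.\ $x\in\Omega$ (via the a.e.\ second differentiability of $\varphi$, i.e.\ the Alexandrov--Bangert theorem, composed with the smooth map $\exp$); the absolute continuity $\mu_t\in\cP^2_{\ac}(M,\vol_g)$ for $t\in[0,1)$; and the Jacobian identity
\[ \rho_t^g\big( \cT_t(x) \big) \det\big( D\cT_t(x) \big) = \rho_0^g(x) \]
for $\mu_0$-a.e.\ $x$, with $\det(D\cT_t(x))>0$ for $t\in[0,1)$.

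Next I would substitute $\rho_t^g=\rho_t e^{-\psi}$ and $\rho_0^g=\rho_0 e^{-\psi}$ into the identity above and cancel $e^{-\psi(x)}$, obtaining
\[ \rho_t\big( \cT_t(x) \big)\, e^{\psi(x)-\psi(\cT_t(x))} \det\big( D\cT_t(x) \big) = \rho_0(x), \]
which is precisely $(\rho_t\circ\cT_t)\bJ_t^{\omega}=\rho_0$ with $\bJ_t^{\omega}(x):=e^{\psi(x)-\psi(\cT_t(x))}\det(D\cT_t(x))$. Since $\psi$ is smooth and thus finite, the exponential factor is strictly positive, so $\bJ_t^{\omega}>0$ $\mu_0$-a.e.\ follows from $\det(D\cT_t)>0$, i.e.\ for all $t\in[0,1)$. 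For the endpoint $t=1$ under the extra hypothesis $\nu\in\cP^2_{\ac}(M,\vol_g)$, I would invoke the symmetry of the quadratic transport problem: the time-reversed curve $s\mapsto\mu_{1-s}$ is again a $W_2$-geodesic of the same type, generated by a locally semi-convex potential associated with the now absolutely continuous initial datum $\nu$, so applying the $t\in[0,1)$ conclusion to the reversed transport extends all assertions (absolute continuity, the Jacobian equation, positivity of $\bJ_t^{\omega}$) to $t=1$; this is the second part of \cite[Theorem~11.1]{Vi2}.

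The step carrying the genuine analytic weight --- though it is entirely supplied by the cited results --- is the a.e.\ differentiability of $\cT_t$ together with the strict positivity $\det(D\cT_t)>0$ for $t<1$: this rests on the fact that an \emph{interior} point $\cT_t(x)$ of a minimizing geodesic is never conjugate to its starting point, so $d(\exp_x)_{t\nabla\varphi(x)}$ is nonsingular, and it can genuinely fail at $t=1$ unless the target is known to be absolutely continuous. Everything that is specific to the present paper --- the passage from $\vol_g$ to $\omega=e^{-\psi}\vol_g$ and the resulting definition of $\bJ_t^{\omega}$ --- reduces to the density computation above, so the only real care required is bookkeeping of densities and null sets (the open set $\Omega$ with $\mu(\Omega)=1$, and the $\mu_0$-a.e.\ qualifiers) when translating between the two reference measures.
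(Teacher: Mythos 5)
Your proposal is correct and is essentially the argument the paper intends: the paper gives no proof beyond citing \cite[Theorems~8.7, 11.1]{Vi2}, and your reduction --- apply the unweighted Jacobian/Monge--Amp\`ere theorem for $\vol_g$-densities and absorb the smooth conformal factor $e^{-\psi}$ into $\bJ_t^{\omega}$ by the substitution $\rho_t^g=\rho_t e^{-\psi}$, with the $t=1$ case supplied by the absolutely continuous target part of \cite[Theorem~11.1]{Vi2} --- is exactly the bookkeeping implicit in that citation.
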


Note that $\bJ^{\omega}_t$ is the combination of the Jacobian $\det(D\cT_t)$
of $\cT_t$ with respect to the metric $g$ and the ratio $e^{\psi-\psi(\cT_t)}$
of the weight $e^{-\psi}$ on $\vol_g$.

\section{Generalized relative entropies}\label{sc:Hm}

Before discussing the $m$-relative entropy, we briefly review
the Boltzmann and the Tsallis entropies (see \cite{Ts1}, \cite{Ts2}),
and explain the motivation related to information geometry
(see \cite{Am}, \cite{AN}).

\subsection{Background: Tsallis entropy and information geometry}\label{ssc:ent}

Entropy is a functional playing prominent roles in thermodynamics,
information theory (sometimes with the opposite sign) and many other fields.
It describes how particles diffuse in thermodynamics, and measures
the uncertainty of an event in information theory.
The most fundamental entropy is the {\it Boltzmann$($-Gibbs-Shannon$)$ entropy}
given by
\[ E(\mu)=-\int_{\R^n} \rho \ln\rho \,dx \]
for $\mu=\rho dx \in \cP_{\ac}(\R^n,dx)$, where $dx$ is the Lebesgue measure.

Boltzmann entropy is thermodynamically {\it extensive} and probabilistically {\it additive},
so that it is suitable for the treatment of independent systems.
Precisely, for two independent distributions $\mu_1,\mu_2 \in \cP_{\ac}(\R^n,dx)$
and their joint probability $\mu_1 \times \mu_2 \in \cP_{\ac}(\R^{2n},dx)$,
one easily observes $E(\mu_1 \times \mu_2)=E(\mu_1)+E(\mu_2)$.
Recently, there is a growing interest in strongly correlated systems
and non-additive entropies.
Among them, we are interested in the {\it Tsallis entropy} defined by
\begin{equation}\label{eq:Em}
E_m(\mu):=-\int_{\R^n} e_m(\rho) \,dx =-\int_{\R^n} \rho \ln_m \rho \,dx
 =-\int_{\R^n} \frac{\rho^m-\rho}{m-1} \,dx
\end{equation}
for $\mu=\rho dx \in \cP_{\ac}(\R^n,dx)$, where $m \in [(n-1)/n,1) \cup (1,2]$.
Note that letting $m$ tend to $1$ recovers the Boltzmann entropy $E(\mu)$,
and that $E_m(\mu)$ is closely related to the {\it R\'enyi entropy}
\begin{equation}\label{eq:Ren}
S_N(\mu):=-\int_{\R^n} \rho^{1-1/N} \,dx =(m-1)E_m(\mu)-1.
\end{equation}
One can connect $E$ and $E_m$ via Gaussian measures as follows.
On the one hand, given $v \in \R^n$ and $V \in \Sym^+(n,\R)$,
the (usual) Gaussian measure
\[ N(v,V)=\frac{1}{(2\pi)^{n/2} (\det V)^{1/2}}
 \exp\bigg[ -\frac{1}{2}\langle x-v,V^{-1}(x-v) \rangle \bigg] dx \]
maximizes $E$ among $\mu \in \cP_{\ac}(\R^n,dx)$ with mean $v$
and covariance matrix $V$.
On the other hand, the $m$-Gaussian measure $N_m(v,V)$ defined in $(\ref{eq:Nm})$
similarly maximizes $E_{2-m}$ under the same constraint (for $m \neq 1/2,2$).

In the following sections, we shall verify that a number of further
geometric and analytic properties of $E$ have counterparts for $E_m$.
Precisely, since $E_m$ itself is not really interesting in our view
(see Remark~\ref{rm:N<n}(2)), we modify $E_m$ in the manner of information geometry.

We start from the family of Gaussian measures
\[ \cN(n):=\{ N(v,V) \,|\, v \in \R^n,\, V \in \Sym^+(n,\R) \} \]
as an $((n^2+3n)/2)$-dimensional manifold.
In information geometry, we equip $\cN(n)$ with the {\it Fisher information metric}
$m_F$ which is different from the Wasserstein metric $W_2$.
In fact, $(\cN(1),m_F)$ has the negative constant sectional curvature (\cite{Am}),
while $(\cN(1),W_2)$ is flat (cf.\ \cite[Theorem~2.2]{Ta1} and the references therein).
The Fisher metric admits a pair of dually flat connections
({\it exponential} and {\it mixture connections}) and the {\it Kullback-Leibler divergence}
\[ H(\mu|\nu) =\int_{\R^n}
 \frac{\rho}{\sigma} \ln\bigg( \frac{\rho}{\sigma} \bigg) \,d\nu \]
for $\nu=\sigma dx \in \cP_{\ac}(\R^n,dx)$ and $\mu=\rho dx \in \cP_{\ac}(\R^n,\nu)$.
Note that $H(\mu|\nu)$ is nonnegative by Jensen's inequality.
The square root of the divergence $H(\mu|\nu)$ can be regarded as a kind of
distance between $\mu$ and $\nu$.
It certainly satisfies a generalized Pythagorean theorem,
though it does not satisfy symmetry nor the triangle inequality.
The Kullback-Leibler divergence $H(\mu|\nu)$ coincides with the
{\it relative entropy} $\Ent_{\nu}(\mu)$ of $\mu$ with respect to $\nu$.
Roughly speaking, $\Ent_{\nu}(\mu)$ is defined for $\mu \in \cP(\R^n)$
and a Borel measure $\nu$ on $\R^n$ by
\begin{equation}\label{eq:rent}
\Ent_{\nu}(\mu):= \left\{ \begin{array}{cl}
 \int_{\R^n} \varsigma \ln \varsigma \,d\nu
 & {\rm for}\ \mu=\varsigma \nu \in \cP_{\ac}(\R^n,\nu), \\
 \infty & {\rm otherwise},
\end{array} \right.
\end{equation}
and then $\Ent_{\nu}(\mu) \ge -\ln\nu(\R^n)$.

The family of $m$-Gaussian measures
\[ \cN(n,m):=\{ N_m(v,V) \,|\, v \in \R^n,\, V \in \Sym^+(n,\R) \} \]
similarly admits dually flat connections and the corresponding {\it Bregman divergence}
(called the {\it $\beta$-divergence}, cf.\ \cite[\S 2.1]{OW}) is
\begin{equation}\label{eq:Breg}
H_m(\mu|\nu)=\frac{1}{m(m-1)} \int_{\R^n}
\{ \rho^m-m\rho \sigma^{m-1}+(m-1)\sigma^m \} \,dx
\end{equation}
for $\nu=\sigma dx \in \cP_{\ac}(\R^n,dx)$ and $\mu=\rho dx \in \cP_{\ac}(\R^n,\nu)$.
We can rewrite this by using $e_m$ as
\[ H_m(\mu|\nu)=\frac{1}{m} \int_{\R^n}
\{ e_m(\rho) -e_m(\sigma) -e'_m(\sigma)(\rho-\sigma) \} \,dx \]
and recover the Kullback-Leibler divergence as the limit:
\[ \lim_{m \to 1}H_m(\mu|\nu) =\int_{\R^n}
 \{ \rho\ln\rho -\sigma\ln\sigma -(\ln\sigma+1)(\rho-\sigma) \} \,dx
 =H(\mu|\nu). \]
It will turn out that the entropy induced from $(\ref{eq:Breg})$ is appropriate
for our purpose.
We remark that the division by $m$ in $(\ref{eq:Breg})$ is unessential,
we prefer this form merely for aesthetic reasons of the presentation of Theorem~\ref{th:mCD}.

\subsection{$m$-relative entropy}\label{ssc:Hm}

Recall our weighted Riemannian manifold $(M,\omega)$ and reference measure
$\nu=\sigma \omega$.
The Bregman divergence $(\ref{eq:Breg})$ leads us to the following
generalization of the relative entropy.

\begin{definition}[$m$-relative entropy]\label{df:Hm}
Assume $\sigma \in L^m(M,\omega)$.
Given $\mu \in \cP(M)$, let $\mu=\rho\omega +\mu^s$ be its
Lebesgue decomposition into absolutely continuous and singular
parts with respect to $\omega$.
Then we define the {\it $m$-relative entropy} as follows.

(1) For $m<1$,
\begin{align}
&H_m(\mu|\nu) \nonumber\\
&:= \frac{1}{m}\int_M
 \{ e_m(\rho)-e_m(\sigma)-e'_m(\sigma)(\rho-\sigma) \} \,d\omega
 -\frac{1}{m-1}\int_M \sigma^{m-1} \,d\mu^s +H_m(\infty) \mu^s(M)  \nonumber\\
&= \frac{1}{m(m-1)}\int_M \{ \rho^m+(m-1)\sigma^m \} \,d\omega
 -\frac{1}{m-1}\int_M \sigma^{m-1} \,d\mu
 +H_m(\infty) \mu^s(M) \label{eq:Hm}
\end{align}
if $\sigma \in L^{m-1}(M,\mu)$, where $H_m(\infty):=0$.
We define $H_m(\mu|\nu):=\infty$ for $\mu \in \cP(M)$ with $\sigma \not\in L^{m-1}(M,\mu)$.

(2) For $m>1$, $H_m(\mu|\nu)$ is defined by \eqref{eq:Hm} if $\rho \in L^m(M,\omega)$,
where $H_m(\infty):=\infty$ and $\infty \cdot 0=0$ as convention.
We set $H_m(\mu|\nu):=\infty$ for $\mu \in \cP(M)$ with $\rho \not\in L^m(M,\omega)$.
\end{definition}

For $\mu=\rho\omega \in \cP_{\ac}(M,\omega)$, \eqref{eq:Hm} has the simplified form
\[ H_m(\mu|\nu)= \frac{1}{m(m-1)}\int_M
 \{ \rho^m-m\rho\sigma^{m-1}+(m-1)\sigma^m \} \,d\omega \]
as in $(\ref{eq:Breg})$.
Note that the first two terms in the right hand side are regarded as the internal
and external energies, and the last term (which is independent of $\mu$)
is added for the sake of nonnegativity (see Lemma~\ref{lm:Hm}).

\begin{remark}\label{rm:Hm}
(1) If $\Hess\Psi \ge K>0$, then the primal assumption $\sigma \in L^m(M,\omega)$
is clearly satisfied for $m>1$ by Lemma~\ref{lm:K>0}(iii).
We deduce from Lemma~\ref{lm:K>0}(i) that $\sigma \in L^m(M,\omega)$
also holds true if $\Hess\Psi \ge K>0$, $\Ric_N \ge 0$ and $m \in (1/2,1)$.

(2-a) For $m<1$, if $\sigma \in L^{m-1}(M,\mu)$, then the H\"older inequality implies
\[ \int_M \rho^m \,d\omega
 =\int_M (\rho \sigma^{m-1})^m \sigma^{m(1-m)} \,d\omega
 \le \bigg( \int_M \rho \sigma^{m-1} \,d\omega \bigg)^m
 \bigg( \int_M \sigma^m \,d\omega \bigg)^{1-m}. \]
Thus we have $\rho \in L^m(M,\omega)$.
Moreover, for $\mu=\rho\omega \in \cP_{\ac}(M,\omega)$, it holds
\begin{align*}
&H_m(\mu|\nu)  -\frac{1}{m} \int_M \sigma^m \,d\omega \\
&\ge \frac{1}{m(m-1)} \bigg( \int_M \sigma^{m-1} \,d\mu \bigg)^m
 \bigg( \int_M \sigma^m \,d\omega \bigg)^{1-m}
 +\frac{1}{1-m} \int_M \sigma^{m-1} \,d\mu \\
&= \frac{1}{m(1-m)} \bigg( \int_M \sigma^{m-1} \,d\mu \bigg)^m
 \bigg\{ m\bigg( \int_M \sigma^{m-1} \,d\mu \bigg)^{1-m}
 -\bigg( \int_M \sigma^m \,d\omega \bigg)^{1-m} \bigg\},
\end{align*}
and hence it is natural to define $H_m(\mu|\nu)=\infty$ for
$\mu$ with $\sigma \not\in L^{m-1}(M,\mu)$.

(2-b) For $m>1$ and $\rho \in L^m(M,\omega)$, the H\"older inequality
\[ \int_M \rho \sigma^{m-1} \,d\omega \le
 \bigg( \int_M \rho^m \,d\omega \bigg)^{1/m}
 \bigg( \int_M \sigma^m \,d\omega \bigg)^{(m-1)/m} \]
similarly yields $\sigma \in L^{m-1}(M,\mu)$ and,
for $\mu=\rho\omega \in \cP_{\ac}(M,\omega)$,
\begin{align*}
&H_m(\mu|\nu)  -\frac{1}{m} \int_M \sigma^m \,d\omega \\
&\ge \frac{1}{m(m-1)} \bigg( \int_M \rho^m \,d\omega \bigg)^{1/m}
 \bigg\{ \bigg( \int_M \rho^m \,d\omega \bigg)^{(m-1)/m}
 -m\bigg( \int_M \sigma^m \,d\omega \bigg)^{(m-1)/m} \bigg\}.
\end{align*}
Hence it is again natural to set $H_m(\mu|\nu)=\infty$ for
$\rho \not\in L^m(M,\omega)$.

(3) The validity of the definition of $H_m(\infty)$ would be understood
by the following observation (putting $\rho=\chi_{B(x,\ve)}/\omega(B(x,\ve))$
so that $\chi_{B(x,\ve)}$ is the characteristic function of $B(x,\ve)$):
\[ \int_{B(x,\ve)} \frac{1}{\omega(B(x,\ve))^m} \,d\omega
 =\omega\big( B(x,\ve) \big)^{1-m} \ \to \
 \left\{ \begin{array}{cl} 0 & {\rm if}\ m<1, \\
 \infty & {\rm if}\ m>1 \end{array} \right. \]
as $\ve$ tends to zero (see also Lemma~\ref{lm:lsc} below).
\end{remark}

Next we see that $\nu$ is a unique {\it ground state} of $H_m(\cdot|\nu)$
(provided $\nu(M)=1$).

\begin{lemma}\label{lm:Hm}
We have $H_m(\mu|\nu) \ge 0$ for all $\mu \in \cP(M)$,
and equality holds if and only if $\nu \in \cP_{\ac}(M,\omega)$ and $\mu=\nu$.
\end{lemma}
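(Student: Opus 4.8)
The plan is to reduce the whole statement to the strict convexity of the single-variable function $e_m$ on $[0,\infty)$. First I would record that $e''_m(t)=mt^{m-2}>0$ for $t>0$ (here $m>0$ since $m\ge (n-1)/n$), so $e_m$ is strictly convex on $(0,\infty)$; because $e_m$ is continuous at $0$ with $e_m(0)=0$, it is strictly convex on all of $[0,\infty)$. Consequently, for every $s>0$ and every $t\ge 0$ one has the tangent-line inequality
\[ e_m(t)\ge e_m(s)+e'_m(s)(t-s), \]
with equality if and only if $t=s$; the boundary value $t=0$ is covered separately because $e_m(0)-\big(e_m(s)-se'_m(s)\big)=s^m>0$.

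Applying this pointwise with $t=\rho(x)$, $s=\sigma(x)$ shows that the integrand in Definition~\ref{df:Hm},
\[ F:=e_m(\rho)-e_m(\sigma)-e'_m(\sigma)(\rho-\sigma), \]
is nonnegative $\omega$-almost everywhere, so $\int_M F\,d\omega\in[0,\infty]$ is well defined. For $m<1$ one has $\sigma=\exp_m(-\Psi)>0$ everywhere, hence $\sigma^{m-1}>0$, and since $H_m(\infty)=0$ the singular contribution equals $\frac{1}{1-m}\int_M\sigma^{m-1}\,d\mu^s\ge 0$; as $m>0$, summing gives $H_m(\mu|\nu)\ge 0$ (the case $\sigma\notin L^{m-1}(M,\mu)$ being trivial, as then $H_m(\mu|\nu)=\infty$). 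For $m>1$ I would split according to whether $\mu^s$ vanishes: if $\mu^s\neq 0$ the convention $H_m(\infty)=\infty$ forces $H_m(\mu|\nu)=\infty$, while if $\mu^s=0$ then $\mu=\rho\omega$ with $\rho\in L^m(M,\omega)$, the three integrals in $(\ref{eq:Hm})$ are finite by the H\"older estimates of Remark~\ref{rm:Hm}(2-b) together with the primal assumption $\sigma\in L^m(M,\omega)$, and $(\ref{eq:Hm})$ rearranges exactly into $\frac{1}{m}\int_M F\,d\omega\ge 0$.

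For the equality case, suppose $H_m(\mu|\nu)=0$. Since the displayed expressions are sums of nonnegative terms, this forces $\int_M F\,d\omega=0$, hence $F=0$ $\omega$-a.e., and the strictness in the tangent-line inequality then gives $\rho=\sigma$ $\omega$-a.e.; moreover, for $m<1$ it also forces $\int_M\sigma^{m-1}\,d\mu^s=0$, whence $\mu^s=0$ because $\sigma^{m-1}>0$, and for $m>1$ we saw $\mu^s=0$ is already imposed. Thus $\mu=\sigma\omega$, and since $\mu\in\cP(M)$ we conclude $\nu(M)=\int_M\sigma\,d\omega=1$, i.e.\ $\nu\in\cP_{\ac}(M,\omega)$ and $\mu=\nu$. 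The converse is immediate: if $\nu\in\cP_{\ac}(M,\omega)$ and $\mu=\nu$, then $\rho=\sigma$ and $\mu^s=0$, so $F\equiv 0$ and $H_m(\mu|\nu)=0$.

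I expect the only genuine difficulty to be the bookkeeping around singular measures and the $\infty$-conventions, especially for $m>1$: one must make sure $(\ref{eq:Hm})$ is interpreted as an element of $(-\infty,\infty]$ and that no $\infty-\infty$ ambiguity arises, which is why it is cleaner to argue via the first line of Definition~\ref{df:Hm} (where the integrand $F$ is manifestly $\ge 0$) rather than via the rearranged form. The mathematical core—strict convexity of $e_m$ and the resulting tangent-line inequality—is elementary.
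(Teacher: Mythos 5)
Your proof is correct and follows essentially the same route as the paper: strict convexity of $e_m$ gives pointwise nonnegativity of the integrand $e_m(\rho)-e_m(\sigma)-e'_m(\sigma)(\rho-\sigma)$ with equality only at $\rho=\sigma$, while the singular part is nonnegative for $m<1$ (since $\sigma>0$) and forces $H_m=\infty$ for $m>1$. Your extra bookkeeping on the $t=0$ (and implicitly $\sigma=0$ for $m>1$) boundary cases and on the $\infty$-conventions only makes explicit what the paper leaves tacit.
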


\begin{proof}
Note that, if $\mu^s(M)>0$, then the singular part
\[ -\frac{1}{m-1}\int_M \sigma^{m-1} \,d\mu^s +H_m(\infty) \mu^s(M) \]
in $(\ref{eq:Hm})$ is positive for $m<1$ (since $\sigma>0$ on $M$)
and infinity for $m>1$, respectively.
Hence it is sufficient to consider the absolutely continuous part.
As the function $e_m(t)=(t^m-t)/(m-1)$ is strictly convex on $(0,\infty)$,
we have
\[ e_m(\rho)-e_m(\sigma)-e'_m(\sigma)(\rho-\sigma) \ge 0 \]
in $(\ref{eq:Hm})$ and equality holds if and only if $\rho=\sigma$.
Therefore $H_m(\mu|\nu) \ge 0$ and equality holds if and only if
$\mu^s(M)=0$ and $\rho=\sigma$ $\omega$-a.e..
$\qedd$
\end{proof}

The following lemma will be used in Section~\ref{sc:gf} (Claim~\ref{cl:gf})
where $M$ is assumed to be compact.
This also guarantees the validity of the definition of $H_m(\infty)$.

\begin{lemma}\label{lm:lsc}
Let $(M,g)$ be compact.
Then the entropy $H_m(\cdot|\nu)$ is lower semi-continuous with respect to
the weak topology, that is to say, if a sequence $\{ \mu_i \}_{i \in \N} \subset \cP(M)$
weakly converges to $\mu \in \cP(M)$, then we have
\[ H_m(\mu|\nu) \le \liminf_{i \to \infty} H_m(\mu_i|\nu). \]
\end{lemma}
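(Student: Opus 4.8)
The plan is to reduce the claim to standard lower semi-continuity results for integral functionals of the type $\mu \mapsto \int_M f(\rho)\,d\omega$ under weak convergence, where $\rho$ denotes the density of the absolutely continuous part of $\mu$ with respect to $\omega$. Writing $\mu_i = \rho_i\omega + \mu_i^s$ and $\mu = \rho\omega + \mu^s$, I will treat the $m<1$ and $m>1$ cases separately, since the sign of $m-1$ flips the roles of the various terms and $H_m(\infty)$ changes from $0$ to $\infty$.

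First I would dispose of the external-energy term. Since $M$ is compact and $\sigma = \exp_m(-\Psi)$ with $\Psi\in C(M)$, the function $\sigma$ is continuous and bounded on $M$ (for $m>1$, $\sigma$ is continuous and vanishes outside $M_0$, but still bounded), so $\sigma^{m-1}$ is a bounded continuous function; hence $\int_M \sigma^{m-1}\,d\mu_i \to \int_M \sigma^{m-1}\,d\mu$ by the very definition of weak convergence. Likewise the constant term $\frac{1}{m}\int_M\sigma^m\,d\omega$ (finite by the standing assumption $\sigma\in L^m(M,\omega)$) passes to the limit trivially. Thus the whole problem is the semi-continuity of
\[
\mu \longmapsto \frac{1}{m(m-1)}\int_M \rho^m\,d\omega \;+\; H_m(\infty)\,\mu^s(M).
\]
For $m>1$ the coefficient $1/(m(m-1))$ is positive, $t\mapsto t^m$ is convex and superlinear, and $H_m(\infty)=\infty$; this is the classical situation (cf.\ the lower semi-continuity of internal energies in \cite[Chapter~29]{Vi2} or Buttazzo's theory of integral functionals): a convex superlinear integrand gives a functional that is lower semi-continuous under weak convergence, with the singular part contributing $+\infty$ whenever it is nonzero. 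I would cite or reproduce this: approximate $t^m$ from below by an increasing sequence of functions of the form $a_k t + b_k$ (affine minorants), use that $\int_M (a_k\rho_i + b_k)\,d\omega \le \int_M a_k\,d\mu_i + b_k\omega(M)$ converges, and take suprema.

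For $m<1$ the coefficient $1/(m(m-1))$ is negative and $t\mapsto t^m$ is concave, so the term $\frac{1}{m(m-1)}\int_M\rho^m\,d\omega = \frac{1}{m(1-m)}\int_M(-\rho^m)\,d\omega$ is again a positive multiple of the integral of a convex function $t\mapsto -t^m$; this integrand is convex but bounded (it takes values in $[-1,0]$ is false—rather in $(-\infty,0]$ and $\to 0$ as $t\to\infty$), in particular sublinear, so the relevant fact is that for a bounded-below convex integrand the functional $\mu\mapsto\int_M \phi(\rho)\,d\omega$ is weakly lower semi-continuous and, crucially, the singular part contributes nothing ($H_m(\infty)=0$), matching the definition—this is again covered by the standard theory (and consistent with Remark~\ref{rm:Hm}(3)). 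The same affine-minorant argument applies: write $-t^m = \sup_k(a_k t + b_k)$ over a countable family of affine functions with $a_k\le 0$, note $\int_M(a_k\rho_i+b_k)\,d\omega \ge a_k\mu_i(M) + b_k\omega(M) = a_k + b_k\omega(M)$ when $a_k\le 0$ (using $\rho_i\omega \le \mu_i$), so $\int_M(-\rho_i^m)\,d\omega \ge \sup_k(a_k + b_k\omega(M))$ stays controlled; then a monotone-convergence/Fatou argument in $k$ combined with $\liminf_i$ yields the bound. I also need to handle the case $\sigma\notin L^{m-1}(M,\mu)$ where by definition $H_m(\mu|\nu)=\infty$: here I would argue that if $\liminf_i H_m(\mu_i|\nu)<\infty$ along a subsequence then the estimates in Remark~\ref{rm:Hm}(2-a) force $\int_M \sigma^{m-1}\,d\mu_i$ to be bounded, and since $\sigma^{m-1}$ is lower semi-continuous (indeed continuous) and $\ge$ a positive constant, $\mu\mapsto\int_M\sigma^{m-1}\,d\mu$ is itself weakly lower semi-continuous, giving $\sigma\in L^{m-1}(M,\mu)$, a contradiction.

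The main obstacle I anticipate is the bookkeeping of the singular parts in tandem with the possibly-infinite-valued terms, i.e.\ making the affine-minorant / lower-semicontinuity argument simultaneously account for $\mu^s$, and in the $m<1$ case verifying carefully that $H_m(\infty)=0$ is exactly the value forced by the lower-semicontinuous envelope (so that no positive mass escapes to infinity undetected). Once the functional is written as a supremum of weakly continuous affine functionals plus a term that is $\liminf$-stable, the result follows; the delicate point is choosing the affine minorants and the convention $\infty\cdot 0 = 0$ so that the identity $\int_M \rho^m\,d\omega + (\text{singular contribution})$ is genuinely the relaxation of $\int_M\rho_i^m\,d\omega$.
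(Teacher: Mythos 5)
Your reduction coincides with the paper's proof: on compact $M$ the function $\sigma^{m-1}$ (equal to $1+(1-m)\Psi$ for $m<1$ and to $\max\{1-(m-1)\Psi,0\}$ for $m>1$) is bounded and continuous, so $\mu\mapsto -\frac{1}{m-1}\int_M\sigma^{m-1}\,d\mu$ is weakly continuous, the term $\frac{1}{m}\int_M\sigma^m\,d\omega$ is a constant, and everything reduces to the weak lower semi-continuity of $h_1(\mu)=\frac{1}{m(m-1)}\int_M\rho^m\,d\omega+H_m(\infty)\,\mu^s(M)$. The paper settles this last point by citing \cite[Theorem~B.33]{LV2}, observing that $U_m(t)=t^m/m(m-1)$ is continuous, convex, $U_m(0)=0$ and $\lim_{t\to\infty}U_m(t)/t=H_m(\infty)$; so your option of simply invoking the standard lower semi-continuity theorem for internal energies, with the recession constant weighting the singular part, is exactly the paper's argument. (Your side worry about $\sigma\notin L^{m-1}(M,\mu)$ for $m<1$ is vacuous here, since $\sigma^{m-1}$ is bounded on the compact $M$.)

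The self-contained argument you sketch in place of that citation, however, has a genuine gap: with affine minorants $a_kt+b_k$ having \emph{constant} coefficients, $\int_M(a_k\rho_i+b_k)\,d\omega=a_k\int_M\rho_i\,d\omega+b_k\,\omega(M)$, and $\int_M\rho_i\,d\omega=\mu_i^{\mathrm{ac}}(M)$ is neither weakly continuous nor semi-continuous in either direction (absolutely continuous measures can converge to singular ones and vice versa); for $m>1$ your displayed inequality even bounds the minorant from above, which is the wrong direction for a lower bound on $\liminf_i\int_M\rho_i^m\,d\omega$. Moreover, even granting convergence of these quantities, taking the supremum over $k$ can only produce $\omega(M)\,U_m\big(\mu^{\mathrm{ac}}(M)/\omega(M)\big)$, i.e.\ the Jensen lower bound for $\int_M U_m(\rho)\,d\omega$, never the internal energy itself; indeed your own computation for $m<1$ ends with the uniform bound $\int_M\rho_i^m\,d\omega\le\omega(M)^{1-m}$, which is not a semi-continuity statement. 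To recover $\int_M U_m(\rho)\,d\omega+H_m(\infty)\mu^s(M)$ one needs spatially varying test functions, e.g.\ the Legendre-type representation of $h_1$ as a supremum, over $\varphi\in C(M)$ with $\sup_M\varphi$ strictly below the recession slope $H_m(\infty)$, of the weakly continuous functionals $\mu\mapsto\int_M\varphi\,d\mu-\int_M U_m^*(\varphi)\,d\omega$ (equivalently, a localization of your affine-minorant argument on a fine partition of $M$); this is precisely the content of \cite[Theorem~B.33]{LV2} (cf.\ also the lower semi-continuity results for internal energies in \cite{Vi2}). So either cite that result, as the paper does and as you offer as a fallback, or carry out the duality/localization; the constant-coefficient version does not suffice.
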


\begin{proof}
We divide $H_m(\mu|\nu)-m^{-1}\int_M \sigma^m \,d\omega$ into two parts:
\[ h_1(\mu):=\frac{1}{m(m-1)} \int_M \rho^m \,d\omega +H_m(\infty)\mu^s(M),
 \quad h_2(\mu):=-\frac{1}{m-1} \int_M \sigma^{m-1} \,d\mu. \]
Then $h_2(\mu)$ is clearly continuous in $\mu$ (since $M$ is compact).
In addition, the lower semi-continuity of $h_1(\mu)$ follows from
\cite[Theorem~B.33]{LV2} since the function $U_m(t):=t^m/m(m-1)$ is continuous,
convex and satisfies $U_m(0)=0$ as well as $\lim_{t \to \infty}U_m(t)/t=H_m(\infty)$.
$\qedd$
\end{proof}

\section{Displacement convexity}\label{sc:dc}

In this section, we prove our first main theorem on a characterization of
the displacement convexity of $H_m(\cdot|\nu)$ along the lines of
\cite{CMS}, \cite{vRS}, \cite{Stcon} and \cite{StII}.

In \cite{Stcon}, Sturm considered a more general class of entropies
(or free energies) on unweighted Riemannian manifolds.
Then his \cite[Theorem~1.3]{Stcon} includes the equivalence between (A) and (B)
in Theorem~\ref{th:mCD} below (with $\omega=\vol_g$, see also \cite[Remark~1.1]{Stcon}).
To be precise, in his theorem, the condition (A) is written as
\[ U'(r)\Ric(v) +\Hess\Psi(v,v) \ge K \]
for all $r \in \R$ and unit vectors $v \in TM$, where $U(r)=e^{(m-1)r}/m(m-1)$
(one more condition $U''(r)+U'(r)/n \ge 0$ corresponds to $m \ge (n-1)/n$,
see Remark~\ref{rm:N<n}(1)).
Thus Theorem~\ref{th:mCD} can be regarded as the combination of
\cite[Theorem~1.3]{Stcon} and the equivalence between $\Ric_N \ge K$ and $\CD(K,N)$
(for $(M,\omega)$, see \cite[Theorem~1.7]{StII}, \cite[Theorem~4.22]{LV1}).
Our proof is also in a sense the combination of them.
Recall from \eqref{eq:M0} that $M_0=M$ for $m<1$, $M_0=\Psi^{-1}((-\infty,1/(m-1)))$
for $m>1$, and that $\overline{M_0}=\supp\nu$ in both cases.

\begin{theorem}\label{th:mCD}
Let $(M,\omega,\nu)$ and $m \in [(n-1)/n,1) \cup (1,\infty)$ with
$\sigma \in L^m(M,\omega)$ be given.
Then, for $K \in \R$, the following three conditions are mutually equivalent$:$
\begin{enumerate}[{\rm (A)}]
\item We have $\Ric_N \ge 0$ on $\overline{M_0}$ with $N=1/(1-m)$ as well as
$\Hess\Psi \ge K$ on $\overline{M_0}$ in the sense of Definition~$\ref{df:Psi}$.

\item For any $\mu_0,\mu_1 \in \cP^2_{\ac}(\overline{M_0},\omega)$ such that
any two points $x_0 \in \supp\mu_0$, $x_1 \in \supp\mu_1$ are joined by
some geodesic contained in $\overline{M_0}$, there is a minimal geodesic
$(\mu_t)_{t \in [0,1]} \subset \cP^2_{\ac}(\overline{M_0},\omega)$ along which we have
\begin{equation}\label{eq:mCD}
H_m(\mu_t|\nu) \le (1-t)H_m(\mu_0|\nu) +tH_m(\mu_1|\nu)
 -\frac{K}{2}(1-t)tW_2(\mu_0,\mu_1)^2
\end{equation}
for all $t \in [0,1]$.

\item For any $\mu_0,\mu_1 \in \cP^2(\overline{M_0})$ such that
any two points $x_0 \in \supp\mu_0$, $x_1 \in \supp\mu_1$ are joined by
some geodesic contained in $\overline{M_0}$, there is a minimal geodesic
$(\mu_t)_{t \in [0,1]} \subset \cP^2(\overline{M_0})$ along which we have
$(\ref{eq:mCD})$ for all $t \in [0,1]$.
\end{enumerate}
\end{theorem}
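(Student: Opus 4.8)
The plan is to prove the chain of implications (A) $\Rightarrow$ (C) $\Rightarrow$ (B) $\Rightarrow$ (A), following the pattern of \cite{CMS}, \cite{StII}, \cite{Stcon}, but carrying the weight $e^{-\psi}$ through all estimates. The implication (C) $\Rightarrow$ (B) is essentially trivial: condition (B) concerns pairs of absolutely continuous measures, and for such $\mu_0,\mu_1$ Theorem~\ref{th:MA} guarantees that the interpolating geodesic obtained from the optimal map $\cT_t$ lies in $\cP^2_{\ac}(\overline{M_0},\omega)$ (using $m>1$ convexity of $M_0$ from Lemma~\ref{lm:K>0}(iii) to stay inside $\overline{M_0}$), so the geodesic provided by (C) is already of the type required in (B).

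For (A) $\Rightarrow$ (C) I would first treat the absolutely continuous case and then pass to the general case by approximation. Given $\mu_0=\rho_0\omega$, $\mu_1=\rho_1\omega$, apply Theorem~\ref{th:FG} to get the optimal map $\cT_t(x)=\exp_x(t\nabla\varphi(x))$ and the interpolation $\mu_t=\rho_t\omega$, and use the Jacobian identity $(\rho_t\circ\cT_t)\bJ^\omega_t=\rho_0$ from Theorem~\ref{th:MA}. The key computation is to rewrite
\[
H_m(\mu_t|\nu)-\frac{1}{m}\int_M\sigma^m\,d\omega
=\frac{1}{m(m-1)}\int_M\rho_0\,\bigl(\bJ^\omega_t(x)\bigr)^{-(m-1)}\,d\omega(x)
-\frac{1}{m-1}\int_M\sigma\bigl(\cT_t(x)\bigr)^{m-1}\,d\mu_0(x),
\]
so that convexity in $t$ reduces to a pointwise statement along each transport geodesic $t\mapsto\cT_t(x)$. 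For the first (internal energy) term, the standard device is to set $\mathcal{D}(t):=\bJ^\omega_t(x)^{1/N}$ with $N=1/(1-m)$ and show, from $\Ric_N\ge0$ and the weighted Jacobi/Bochner estimate (\cite[Lemma~3.1]{StII} or \cite{CMS}), that $\mathcal{D}$ is concave; since $s\mapsto s^{-(m-1)}=s^{1/N}\cdot(\text{sign stuff})$ — more precisely $\bJ^{\omega}_t{}^{-(m-1)}=\mathcal{D}(t)^{-(m-1)N}=\mathcal{D}(t)$ when $m>1$ and $=\mathcal{D}(t)$-type power when $m<1$ — one gets the desired convexity (the case analysis $m<1$ vs.\ $m>1$, equivalently $N<0$ vs.\ $N\ge n$, is where the two regimes genuinely differ, and the hypothesis $m\ge(n-1)/n$ is exactly what makes the relevant exponent land in the range where concavity of $\mathcal{D}$ survives, cf.\ Remark~\ref{rm:N<n}). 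For the second (potential energy) term, $t\mapsto\sigma(\cT_t(x))^{m-1}=1+(m-1)(-\Psi(\cT_t(x)))$ is an affine function of $-\Psi$ composed with a geodesic, so $\Hess\Psi\ge K$ gives precisely that $-\frac{1}{m-1}\sigma(\cT_t(x))^{m-1}=\Psi(\cT_t(x))+\text{const}$ is $K$-convex along $\cT_t(x)$, contributing the $-\frac{K}{2}(1-t)td(x_0,x_1)^2$ term after integrating against $\mu_0$ and recognizing $\int d(x,\cT_1(x))^2\,d\mu_0=W_2(\mu_0,\mu_1)^2$. Adding the two contributions yields \eqref{eq:mCD}. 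The extension to general $\mu_0,\mu_1\in\cP^2(\overline{M_0})$ then proceeds by approximating by absolutely continuous measures with uniformly bounded support, using Proposition~\ref{pr:LV} to pass to the limit on the geodesics and Lemma~\ref{lm:lsc}-type lower semicontinuity (or the explicit singular terms in Definition~\ref{df:Hm}, which only help the inequality since $H_m(\infty)\ge0$) to conclude.

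For (B) $\Rightarrow$ (A) — or more economically (C) $\Rightarrow$ (A), but (B) suffices and is the standard route — I would localize. Fixing a point $x\in M_0$ and a unit vector $v\in T_xM$, I plug in explicit one-parameter families of absolutely continuous measures concentrated near $x$ in the direction $v$: roughly, $\mu_0$ supported in a small ball around $\exp_x(-\varepsilon v/2)$ and $\mu_1$ around $\exp_x(\varepsilon v/2)$, with carefully chosen densities (e.g.\ nearly uniform, or a shrinking bump), and expand \eqref{eq:mCD} to second order in $\varepsilon$. The internal-energy term produces the weighted Jacobian second variation, whose leading coefficient is $\Ric_N(v)$ (this is the reverse direction of the Jacobian comparison, and recovering $\Ric_N$ rather than $\Ric$ is exactly where the weight $e^{-\psi}$ enters — the cross term $\langle\nabla\psi,v\rangle^2/(N-n)$ appears from completing the square in the Bochner-type expansion, cf.\ \cite[Theorem~1.7]{StII}). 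The potential term reproduces $\Hess\Psi(v,v)$. Demanding \eqref{eq:mCD} for all such test families forces $\Ric_N(v)\ge0$ and $\Hess\Psi(v,v)\ge K$ pointwise, hence (A). The main obstacle, and the part requiring the most care, is the first term in (A) $\Rightarrow$ (C): making the concavity argument for $\mathcal{D}(t)=\bJ^\omega_t{}^{1/N}$ rigorous when $N$ is negative (so $1/N<0$ and the usual "concavity" becomes a reversed inequality), handling the weight via the definition of $\bJ^\omega_t=e^{\psi-\psi\circ\cT_t}\det(D\cT_t)$, and dealing with the almost-everywhere second differentiability of $\varphi$ (Alexandrov) together with the possibility $\bJ^\omega_t\to0$ at the boundary of the transport, which forces the approximation arguments; the potential-energy term and the reverse implication are comparatively routine once the Jacobian estimate is in hand.
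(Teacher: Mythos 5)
Your outline of (A) $\Rightarrow$ (B) is essentially the paper's argument: rewrite $H_m(\mu_t|\nu)$ via the Jacobian identity, prove convexity of $t\mapsto \bJ^{\omega}_t{}^{1-m}/(m-1)=-N\bJ^{\omega}_t{}^{1/N}$ from $\Ric_N\ge 0$ (note your sign bookkeeping slips: $m<1$ corresponds to $N\ge n$ and concavity of $\bJ^{1/N}$, $m>1$ to $N<0$ and \emph{convexity} of $\bJ^{1/N}$; you state this correctly only in your final paragraph), and use that $\sigma^{m-1}=1-(m-1)\Psi$ makes the potential term exactly $K$-convex along the transport rays. For singular endpoints (only relevant for $m<1$) you propose mollification plus lower semicontinuity, whereas the paper splits the optimal coupling into $\pi-\pi_{ss}$ and $\pi_{ss}$, runs Theorems~\ref{th:FG}, \ref{th:MA} on the first piece and Proposition~\ref{pr:LV} with $\Hess\Psi\ge K$ on the purely singular piece; your route is plausible since $\overline{M_0}=M$ when $m<1$, but it needs more than you supply: Lemma~\ref{lm:lsc} is proved only for compact $M$, and you must construct approximations of $\mu_0,\mu_1$ whose entropies \emph{converge} (not merely satisfy a liminf bound), which is not automatic for the term $\int\sigma^{m-1}\,d\mu^s$ on a noncompact manifold.

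The genuine gap is in (B) $\Rightarrow$ (A). You propose a single family of measures on shrinking balls around $\exp_x(\mp\ve v/2)$ and claim that a second-order expansion of \eqref{eq:mCD} yields both $\Ric_N(v)\ge 0$ and $\Hess\Psi(v,v)\ge K$. It cannot: the internal-energy term scales like $\omega(B)^{1-m}\sim \ve^{n(1-m)}$ while the potential term is of order one, so as the radius tends to zero only one of the two contributions survives at leading order --- the potential term when $m<1$, the internal term when $m>1$. With your family you therefore recover only one of the two conditions (for $m<1$ only $\Hess\Psi\ge K$). The paper's proof needs a second, structurally different construction to reverse this dominance: the transported mass is diluted to a fraction $\ve^{n+1}$ and the remaining mass $1-\ve^{n+1}$ is parked in a fixed ball far away (see $(\ref{eq:mu1})$), which makes the internal term (of order $\ve^{m(n+1)}$, respectively $\ve^{n+1}$ for the potential term) dominant and extracts the missing condition; for $m>1$ the roles of the two constructions are exchanged. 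Moreover, to obtain the sharp bound $\Ric_N\ge 0$ rather than a Bakry--\'Emery-type bound, the two balls must have radii $(1\mp a\delta)\ve$ with a free dilation parameter $a$; the inequality one gets is $(\ref{eq:Rica})$, quadratic in $a$, and only optimizing $a=(\psi\circ\gamma)'(0)/(N-n)$ produces the term $-\langle\nabla\psi,v\rangle^2/(N-n)$ (and, when $N=n$, letting $a\to\pm\infty$ forces $\langle\nabla\psi,v\rangle=0$). Your phrase ``completing the square in the Bochner-type expansion'' gestures at this, but your test family as described contains no such parameter, so the argument as proposed would not deliver condition (A).
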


\begin{proof}
Note that (C) $\Rightarrow$ (B) is clear.
Thus it suffices to show (A) $\Rightarrow$ (C) and (B) $\Rightarrow$ (A).
As the general case of the part (A) $\Rightarrow$ (C) is somewhat technical,
let us begin with absolutely continuous measures, in other words,
(A) $\Rightarrow$ (B).

(A) $\Rightarrow$ (B):
Since the assertion $(\ref{eq:mCD})$ is clear
if $H_m(\mu_0|\nu)=\infty$ or $H_m(\mu_1|\nu)=\infty$, we assume
that both $H_m(\mu_0|\nu)$ and $H_m(\mu_1|\nu)$ are finite.
Theorem~\ref{th:FG} ensures that there is an almost everywhere
twice differentiable function $\varphi:M \lra \R$ such that the map
$\cT_t(x):=\exp_x(t\nabla\varphi(x))$ gives the unique minimal geodesic
$\mu_t:=(\cT_t)_{\sharp}\mu_0$ from $\mu_0$ to $\mu_1$.
Due to \cite[Proposition~4.1]{CMS}, $\cT_1(x)$ is not a cut point of $x$
for $\mu_0$-a.e.\ $x$, and hence the minimal geodesic $(\cT_t(x))_{t \in [0,1]}$
is unique and contained in $\overline{M_0}$.
Recall that, putting $\mu_t=\rho_t \omega$,
\[ H_m(\mu_t|\nu)=\frac{1}{m(m-1)}
 \int_M (\rho_t^{m-1}-m\sigma^{m-1}) \,d\mu_t
 +\frac{1}{m}\int_M \sigma^m \,d\omega. \]
By the Jacobian equation (Theorem~\ref{th:MA}), we deduce that
\begin{align*}
\int_M (\rho_t^{m-1}-m\sigma^{m-1}) \,d\mu_t
&= \int_M \{ \rho_t(\cT_t)^{m-1}-m\sigma(\cT_t)^{m-1} \} \,d\mu_0 \\
&= \int_M \bigg\{ \bigg( \frac{\bJ^{\omega}_t}{\rho_0} \bigg)^{1-m}
 -m\sigma(\cT_t)^{m-1} \bigg\} \,d\mu_0,
\end{align*}
where $\bJ_t^{\omega}(x):=e^{\psi(x)-\psi(\cT_t(x))} \det(D\cT_t(x))>0$ $\mu_0$-a.e..

\begin{claim}\label{cl:Jt}
For $\mu_0$-a.e.\ $x \in M$, the function
$\bJ^{\omega}_t(x)^{1-m}/(m-1)=-N \bJ^{\omega}_t(x)^{1/N}$ is convex in $t$.
\end{claim}

\begin{proof}
For $m<1$ (and hence $N \ge n$), this is proved in \cite[Theorem~1.7]{StII}
(see also \cite[Section~8.2]{Oint}).
We can apply the same calculation to $m>1$ (and $N<0$).
For completeness, we briefly explain how to modify calculations in \cite{Oint}.
With the notations in \cite[Section~8.2]{Oint},
we observe that $\Ric_N \ge 0$ implies $(N-1)h''_3 h_3^{-1} \le 0$.
Thus $h_3$ is convex and $e^{\beta}$ is concave, therefore
\[ \{ e^{-\psi(x)}\bJ^{\omega}_t(x) \}^{1/N}=h(t)
 =(e^{\beta(t)})^{1/N} h_3(t)^{(N-1)/N} \]
is convex in $t$ (via the H\"older inequality
\[ (a+b)^{1/N}(c+d)^{(N-1)/N} \le a^{1/N}c^{(N-1)/N}+b^{1/N}d^{(N-1)/N} \]
for $a,b >0$ and $c,d \ge 0$).
$\hfill \diamondsuit$
\end{proof}

In order to estimate the term $\sigma(\cT_t)^{m-1}/(1-m)$,
we observe from $\Hess\Psi \ge K$ that
\begin{align*}
\frac{\sigma(\cT_t)^{m-1}}{1-m}
&=\frac{1}{1-m}+\Psi(\cT_t) \\
&\le \frac{1}{1-m}+(1-t)\Psi(\cT_0)+t\Psi(\cT_1)
 -\frac{K}{2}(1-t)td(\cT_0,\cT_1)^2 \\
&= (1-t)\frac{\sigma(\cT_0)^{m-1}}{1-m} +t\frac{\sigma(\cT_1)^{m-1}}{1-m}
 -\frac{K}{2}(1-t)td(\cT_0,\cT_1)^2.
\end{align*}
Combining this with Claim~\ref{cl:Jt} and integrating with $\mu_0$
yield the desired inequality $(\ref{eq:mCD})$.

(A) $\Rightarrow$ (C):
We next consider the more technical case where
$\mu_0$ or $\mu_1$ has nontrivial singular part.
There is nothing to prove for $m>1$.
For $m<1$, we decompose as $\mu_0=\rho_0 \omega+\mu_0^s$
and $\mu_1=\rho_1 \omega+\mu_1^s$, and take an optimal coupling
$\pi$ of $\mu_0$ and $\mu_1$.
Now, $\pi$ is decomposed into four parts
$\pi=\pi_{aa}+\pi_{as}+\pi_{sa}+\pi_{ss}$ such that
$(p_1)_{\sharp}(\pi_{aa})$, $(p_1)_{\sharp}(\pi_{as})$, $(p_2)_{\sharp}(\pi_{aa})$
and $(p_2)_{\sharp}(\pi_{sa})$ are absolutely continuous,
and that $(p_1)_{\sharp}(\pi_{sa})$, $(p_1)_{\sharp}(\pi_{ss})$,
$(p_2)_{\sharp}(\pi_{as})$ and $(p_2)_{\sharp}(\pi_{ss})$ are singular
(or null) measures.
Here $p_1,p_2:M \times M \lra M$ denote projections to the first and second elements.

We divide optimal transport between $\mu_0$ and $\mu_1$ into two parts,
corresponding to $\pi-\pi_{ss}$ and $\pi_{ss}$.
As for $\hat{\mu}_0:=(p_1)_{\sharp}(\pi-\pi_{ss})$ and
$\hat{\mu}_1:=(p_2)_{\sharp}(\pi-\pi_{ss})$, Theorems~\ref{th:FG}, \ref{th:MA} are again
applicable and give a minimal geodesic $\hat{\mu}_t=\hat{\rho}_t \omega
 \in (1-\pi_{ss}(M \times M)) \cdot \cP^2_{\ac}(\overline{M_0},\omega)$
(i.e., $\hat{\mu}_t(M)=1-\pi_{ss}(M \times M)$) satisfying
\begin{align*}
\int_M \hat{\rho}_t^m \,d\omega
&\ge (1-t)\int_M \rho_0^m \,d\omega +t\int_M \rho_1^m \,d\omega, \\
\int_M \sigma^{m-1} \,d\hat{\mu}_t
&\le (1-t)\int_M \sigma^{m-1} \,d\hat{\mu}_0
 +t\int_M \sigma^{m-1} \,d\hat{\mu}_1 \\
&\quad -\frac{(1-m)K}{2}(1-t)t \int_{M \times M} d(x,y)^2 \,d(\pi-\pi_{ss})(x,y).
\end{align*}
We then choose an arbitrary minimal geodesic
$\tilde{\mu}_t=\tilde{\rho}_t \omega +\tilde{\mu}_t^s
 \in \pi_{ss}(M \times M) \cdot \cP^2(\overline{M_0})$ from
$\tilde{\mu}_0:=(p_1)_{\sharp}(\pi_{ss})$ to
$\tilde{\mu}_1:=(p_2)_{\sharp}(\pi_{ss})$.
Thanks to Proposition~\ref{pr:LV}, $\tilde{\mu}_t$ is also realized through
a family of geodesics in $\overline{M_0}$, and hence $\Hess\Psi \ge K$ implies
\begin{align*}
\int_M \sigma^{m-1} \,d\tilde{\mu}_t
&\le (1-t)\int_M \sigma^{m-1} \,d\tilde{\mu}_0
 +t\int_M \sigma^{m-1} \,d\tilde{\mu}_1 \\
&\quad -\frac{(1-m)K}{2}(1-t)t \int_{M \times M} d(x,y)^2 \,d\pi_{ss}(x,y).
\end{align*}
We put $\mu_t:=\hat{\mu}_t+\tilde{\mu}_t$ and conclude that
\begin{align*}
H_m(\mu_t|\nu) &= \frac{1}{m(m-1)}
 \int_M \{ (\hat{\rho}_t+\tilde{\rho}_t)^m +(m-1)\sigma^m \} \,d\omega
 +\frac{1}{1-m} \int_M \sigma^{m-1} \,d\mu_t \\
&\le \frac{1}{m(m-1)} \int_M \{ \hat{\rho}_t^m +(m-1)\sigma^m \} \,d\omega
 +\frac{1}{1-m} \int_M \sigma^{m-1} \,d(\hat{\mu}_t+\tilde{\mu}_t) \\
&\le (1-t)H_m(\mu_0|\nu) +tH_m(\mu_1|\nu) -\frac{K}{2}(1-t)tW_2(\mu_0,\mu_1)^2.
\end{align*}

(B) $\Rightarrow$ (A):
By approximation, it suffices to show $\Ric_N \ge 0$ and $\Hess\Psi \ge K$ on $M_0$.
We first consider the case of $m<1$.
Fix a unit vector $v \in T_xM$ with $x \in M_0$ and put
$\gamma(t):=\exp_x(tv)$, $B_{\pm}:=B(\gamma(\pm\delta),(1\mp a\delta)\ve)$
for $0<\ve \ll \delta \ll 1$ with a constant $a \in \R$ chosen later.
Set
\begin{equation}\label{eq:mu0}
\mu_0=\rho_0 \omega:=\frac{\chi_{B_-}}{\omega(B_-)}\omega,
  \quad \mu_1=\rho_1 \omega:=\frac{\chi_{B_+}}{\omega(B_+)}\omega,
\end{equation}
where $\chi_A$ stands for the characteristic
function of a set $A$.
Let $\mu_t=(\cT_t)_{\sharp}\mu_0$ be the unique optimal transport from $\mu_0$
to $\mu_1$.
Recall that
\begin{equation}\label{eq:BA}
H_m(\mu_t|\nu) -\frac{1}{m}\int_M \sigma^m\,d\omega
 =\frac{1}{m(m-1)} \int_M \{ \rho_0^{m-1} (\bJ^{\omega}_t)^{1-m}
 -m\sigma(\cT_t)^{m-1} \} \,d\mu_0,
\end{equation}
where $\bJ^{\omega}_t=e^{\psi-\psi(\cT_t)}\det(D\cT_t)$.
By definition, we find
\[ \rho_0^{m-1}
 =\{ c_n e^{-\psi(\gamma(-\delta))}(1+a\delta)^n\ve^n +O(\ve^{n+1}) \}^{1-m}
 \chi_{B_-}, \]
where $c_n$ denotes the volume of the unit ball in $\R^n$.
Note also that
\begin{equation}\label{eq:DC+}
\int_M (\bJ^{\omega}_t)^{1-m} \,d\mu_0
 \le \bigg( \int_M \bJ^{\omega}_t \,d\mu_0 \bigg)^{1-m}
 =\bigg( \frac{\omega(\supp \mu_t)}{\omega(B_-)} \bigg)^{1-m}.
\end{equation}
As the (second order) behavior of the distance function is controlled
by the sectional curvature, we have
\begin{align*}
\supp\mu_{1/2} &\subset \exp_x \bigg(
 \sum_{i=1}^n a_i \frac{\del}{\del x^i} \in T_xM \,\bigg|\,
 \sqrt{\sum_{i=1}^n \bigg( \frac{a_i}{\ve_i} \bigg)^2} \le 1 \bigg), \\
\ve_i &:=\bigg( 1+\frac{k_i}{2}\delta^2 +O(\delta^3) \bigg) \ve,
\end{align*}
where we chose a coordinate $(x^i)_{i=1}^n$ around $x$ such that
$\{ (\del/\del x^i)|_x \}_{i=1}^n$ is orthonormal and that
$(\del/\del x^1)|_x=\dot{\gamma}(0)$,
and denote by $k_i$ the sectional curvature of the plane spanned by
$\dot{\gamma}(0)$ and $(\del/\del x^i)|_x$ (so that $k_1=0$)
(see the proof of \cite[Theorem~1]{vRS}).
Thus we observe from $\Ric(v)=\sum_{i=1}^{n}k_i$ that
\begin{align}
\limsup_{\ve \to 0} \frac{\omega(\supp \mu_{1/2})}{c_n \ve^n}
&=e^{-\psi(x)}\limsup_{\ve \to 0} \frac{\vol_g(\supp \mu_{1/2})}{c_n \ve^n} \nonumber\\
&\le e^{-\psi(x)} \bigg\{ 1+\frac{1}{2}\Ric(v)\delta^2 +O(\delta^3) \bigg\}. \label{eq:DC+2}
\end{align}
We similarly observe that $\omega(\supp \mu_t)/c_n \ve^n$
is uniformly bounded as $\ve \to 0$.
Hence, since $1-m>0$, the leading term of $(\ref{eq:BA})$ (as $\ve \to 0$) is
\[ \frac{1}{1-m}\int_M \sigma(\cT_t)^{m-1} \,d\mu_0. \]
Thus we obtain from $(\ref{eq:mCD})$ with $t=1/2$ that, by letting $\ve$ go to zero,
\[ \sigma\big( \gamma(0) \big)^{m-1}
 \le \frac{\sigma(\gamma(-\delta))^{m-1}+\sigma(\gamma(\delta))^{m-1}}{2}
 -(1-m)\frac{K}{8}(2\delta)^2. \]
This means that
\[ \Hess\Psi =\frac{1}{1-m}\Hess(\sigma^{m-1}) \ge K \]
in the weak sense.

In order to show $\Ric_N(v) \ge 0$, we choose a point $y$ with $d(x,y) \gg \delta$
and modify $\mu_0$ and $\mu_1$ into
\begin{equation}\label{eq:mu1}
\tilde{\mu}_i:=
 (1-\ve^{n+1})\frac{\chi_{B(y,\delta)}}{\omega(B(y,\delta))}\omega
 +\ve^{n+1}\mu_i
\end{equation}
for $i=0,1$.
Then
$W_2(\tilde{\mu}_0,\tilde{\mu}_1)=\ve^{(n+1)/2} \cdot W_2(\mu_0,\mu_1)$
and
\[ \tilde{\mu}_t:=
 (1-\ve^{n+1})\frac{\chi_{B(y,\delta)}}{\omega(B(y,\delta))}\omega
 +\ve^{n+1}\mu_t \]
is the unique minimal geodesic from $\tilde{\mu}_0$ to $\tilde{\mu}_1$,
so that $(\ref{eq:BA})$ is modified into
\begin{align*}
&H_m(\tilde{\mu}_t|\nu) -\frac{1}{m}\int_M \sigma^m \,d\omega \\
&= \frac{\ve^{n+1}}{m(m-1)}
 \int_M \{ (\ve^{n+1}\rho_0)^{m-1}(\bJ^{\omega}_t)^{1-m}
 -m\sigma(\cT_t)^{m-1} \} \,d\mu_0 \\
&\quad +\frac{1}{m(m-1)} \frac{1-\ve^{n+1}}{\omega(B(y,\delta))} \int_{B(y,\delta)}
 \bigg\{ \bigg( \frac{1-\ve^{n+1}}{\omega(B(y,\delta))} \bigg)^{m-1}
 -m\sigma^{m-1} \bigg\} \,d\omega.
\end{align*}
We rewrite this as
\begin{align}
&H_m(\tilde{\mu}_t|\nu) -\frac{1}{m}\int_M \sigma^m \,d\omega \nonumber\\
&\quad -\frac{1-\ve^{n+1}}{m(m-1)} \bigg\{
 \bigg( \frac{1-\ve^{n+1}}{\omega(B(y,\delta))} \bigg)^{m-1}
 -\frac{m}{\omega(B(y,\delta))} \int_{B(y,\delta)}\sigma^{m-1} \,d\omega
 \bigg\} \nonumber\\
&= \frac{\ve^{n+1}}{m(m-1)}\int_M
 \{ (\ve^{n+1}\rho_0)^{m-1}(\bJ^{\omega}_t)^{1-m}
 -m\sigma(\cT_t)^{m-1} \} \,d\mu_0. \label{eq:BA'}
\end{align}
Since $(\ve^{n+1}\rho_0)^{m-1}
 =\{ c_ne^{-\psi(\gamma(-\delta))}(1+a\delta)^n\ve^{-1}+O(1) \}^{1-m} \chi_{B_-}$,
the leading term of $(\ref{eq:BA'})$ (as $\ve \to 0$) is
\[ \frac{\ve^{m(n+1)}}{m(m-1)}\int_M
 \rho_0^{m-1}(\bJ^{\omega}_t)^{1-m} \,d\mu_0. \]
Therefore $(\ref{eq:mCD})$ with $t=1/2$ and the Jacobian equation (Theorem~\ref{th:MA}) yield that
\begin{align*}
\liminf_{\ve \to 0} \int_M (\bJ^{\omega}_{1/2})^{1-m} \,d\mu_0
&\ge \frac{1}{2} \big\{ \bJ^{\omega}_0\big( \gamma(-\delta) \big)^{1-m}
 +\bJ^{\omega}_1\big( \gamma(-\delta) \big)^{1-m} \big\} \\
&= \frac{1}{2}\bigg\{ 1+\bigg( \frac{1-a\delta}{1+a\delta} \bigg)^{n/N}
 e^{\{ \psi(\gamma(-\delta))-\psi(\gamma(\delta)) \}/N} \bigg\}.
\end{align*}
Combining this with \eqref{eq:DC+} and \eqref{eq:DC+2}, we obtain
\begin{align*}
&1+\frac{1}{2}\Ric(v) \delta^2 \\
&\ge (1+a\delta)^n e^{\psi(x)-\psi(\gamma(-\delta))}
 \bigg( \int_M (\bJ^{\omega}_t)^{1-m} \,d\mu_0 \bigg)^{1/(1-m)} +O(\delta^3) \\
&\ge \frac{1}{2^N}\Big\{ (1+a\delta)^{n/N} e^{\{\psi(x)-\psi(\gamma(-\delta))\}/N}
 +(1-a\delta)^{n/N} e^{\{\psi(x)-\psi(\gamma(\delta))\}/N} \Big\}^N +O(\delta^3).
\end{align*}
Hence we have, expanding the $(1/N)$-th power of both sides near $\delta=0$,
\begin{align*}
&1+\frac{1}{2N}\Ric(v)\delta^2 \\
&\ge \frac{1}{2}\Big\{ (1+a\delta)^{n/N} e^{\{\psi(x)-\psi(\gamma(-\delta))\}/N}
 +(1-a\delta)^{n/N} e^{\{\psi(x)-\psi(\gamma(\delta))\}/N} \Big\} +O(\delta^3) \\
&= 1+\frac{\delta^2}{2} \bigg[ \frac{n}{N}\bigg( \frac{n}{N}-1 \bigg) a^2
 -\bigg\{ \frac{(\psi \circ \gamma)''(0)}{N} -\frac{(\psi \circ \gamma)'(0)^2}{N^2}
 \bigg\} +\frac{2na}{N}\frac{(\psi \circ \gamma)'(0)}{N} \bigg]
+O(\delta^3) \\
&= 1+\frac{\delta^2}{2N}\bigg\{ -(\psi \circ \gamma)''(0)
 +\frac{n(n-N)}{N}a^2+\frac{2n(\psi \circ \gamma)'(0)}{N}a
 +\frac{(\psi \circ \gamma)'(0)^2}{N} \bigg\} +O(\delta^3).
\end{align*}
Therefore we obtain
\begin{equation}\label{eq:Rica}
\Ric(v)+(\psi \circ \gamma)''(0)
 -\frac{n(n-N)}{N}a^2-\frac{2n(\psi \circ \gamma)'(0)}{N}a
 -\frac{(\psi \circ \gamma)'(0)^2}{N} \ge 0.
\end{equation}
If $N>n$, then choosing the minimizer $a=(\psi \circ \gamma)'(0)/(N-n)$
gives the desired curvature bound
\[ \Ric_N(v) =\Ric(v) +(\psi \circ \gamma)''(0) -\frac{(\psi \circ \gamma)'(0)^2}{N-n} \ge 0. \]
If $N=n$, then we consider $a$ going to $\infty$ or $-\infty$
and find $(\psi \circ \gamma)'(0)=0$ as well as $\Ric_n(v) \ge 0$.

In the case of $m>1$, we use the same transport $(\ref{eq:mu0})$
and then the leading term of $(\ref{eq:BA})$ changes into
\[ \frac{1}{m(m-1)}\int_M \rho_0^{m-1}(\bJ^{\omega}_t)^{1-m} \,d\mu_0. \]
Thus calculations as above yield the reverse inequality of \eqref{eq:DC+} and finally
$(\ref{eq:Rica})$ with $N<0$.
We again choose the minimizer $a=(\psi \circ \gamma)'(0)/(N-n)$ and find $\Ric_N(v) \ge 0$.
Similarly, for the transport $(\ref{eq:mu1})$, the leading term
of $(\ref{eq:BA'})$ is
\[ \frac{\ve^{n+1}}{1-m}\int_M \sigma(\cT_t)^{m-1} \,d\mu_0, \]
and then $(\ref{eq:mCD})$ yields
$\Hess\Psi =\Hess(\sigma^{m-1}/(1-m)) \ge K$
(note that $W_2(\tilde{\mu}_0,\tilde{\mu}_1)^2=\ve^{n+1} W_2(\mu_0,\mu_1)^2$
has the same order).
$\qedd$
\end{proof}

\begin{remark}\label{rm:N<n}
(1) If we admit $m \in (0,(n-1)/n)$ and generalize $\Ric_N$ in $(\ref{eq:RicN})$
to $N \in (1,n)$, then Claim~\ref{cl:Jt} is false.
Moreover, as the coefficient of $a^2$ in $(\ref{eq:Rica})$ is negative,
$(\ref{eq:mCD})$ is never satisfied (let $a \to \infty$).
Compare this with \cite[$(1.7)$]{Stcon} which means $m \ge (n-1)/n$
in our setting.

(2) Note that the special case $\nu=\omega$ (i.e., $\Psi \equiv 0$)
in Theorem~\ref{th:mCD} makes sense only for $K=0$.
Then the assertion of Theorem~\ref{th:mCD} corresponds to the equivalence
between $\Ric_N \ge 0$ and the convexity of the R\'enyi entropy $S_N$,
i.e., the curvature-dimension condition $\CD(0,N)$ of $(M,\omega)$.

(3) In the limit case of $m=1$, two weights $\psi$ and $\Psi$ are synchronized
as $\nu=e^{-\psi-\Psi}\vol_g$, and $\Hess \Ent_{\nu} \ge K$
(i.e., $\CD(K,\infty)$ for $(M,\nu)$) is equivalent to the single condition
$\Ric +\Hess(\psi+\Psi) \ge K$ (\cite[Theorem~1]{vRS}, \cite[Proposition~4.14]{StI}).
For $m \neq 1$, however, $\psi$ and $\Psi$ keep separate and they measure
different phases of $(M,\omega,\nu)$, as indicated in Theorem~\ref{th:mCD}.
\end{remark}

\section{Functional inequalities}\label{sc:func}

Since Otto and Villani's celebrated work \cite{OV}, the displacement convexity
of entropy-type functionals has played a significant role in the study of
functional inequalities (and the concentration of measures).
In this section, we follow the argument in \cite[Section~6]{LV2}
that the direct application of the displacement convexity of the entropy
implies various functional inequalities.
Our proofs use only fundamental properties of convex functions.
In more analytic context, related results for $m \neq 1$ in the Euclidean spaces
$(M,\omega)=(\R^n,dx)$ can be found in \cite{AGK}, \cite{CGH} and \cite{Ta2}.
See especially \cite[Section~4]{AGK} and \cite[Section~3]{CGH} for various
generalizations of the Talagrand (transport) inequality,
logarithmic Sobolev (entropy-information) inequality, HWI inequality
and the Poincar\'e inequality.
The relation among these inequalities are also discussed there.

Throughout the section, we suppose that $m>1/2$, $\Ric_N \ge 0$
and that $\Hess \Psi \ge K$ holds for some $K>0$.
Note that $m>1/2$ is clear if $n \ge 3$.
Recall from Lemma~\ref{lm:K>0}(i), (iii) that $\nu(M)<\infty$ automatically
follows from these hypotheses, so that the normalization gives
\[ \bar{\nu}=\bar{\sigma}\omega=\exp_m(-\overline{\Psi})\omega
 :=\nu(M)^{-1} \nu \in \cP_{\ac}(M,\omega) \]
with $\Hess\overline{\Psi} \ge \nu(M)^{1-m}K$ according to Remark~\ref{rm:multi}.
Lemma~\ref{lm:K>0} moreover ensures that $\bar{\sigma} \in L^m(M,\omega)$,
$\bar{\nu} \in \cP^2_{\ac}(M,\omega)$ and that $\overline{M_0}$ is convex.
Keeping these in mind, we will consider $\nu$ with $\nu(M)=1$ for simplicity.

\begin{proposition}[Talagrand inequality]\label{pr:Ta}
Assume that $m \in [(n-1)/n,\infty) \setminus \{1/2,1\}$,
$\nu(M)=1$, $\Ric_N \ge 0$ and $\Hess \Psi \ge K>0$.
Then we have, for any $\mu \in \cP^2(\overline{M_0})$,
\[ W_2(\mu,\nu) \le \sqrt{\frac{2}{K}H_m(\mu|\nu)}. \]
\end{proposition}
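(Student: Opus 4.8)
The plan is to run the standard optimal-transport argument that derives a Talagrand-type inequality directly from the displacement $K$-convexity of the relevant entropy, as in \cite[Section~6]{LV2} or \cite{OV}. First I would apply Theorem~\ref{th:mCD}: since $\Ric_N \ge 0$ and $\Hess\Psi \ge K > 0$, condition (C) holds, so for $\nu$ (which, under our hypotheses, lies in $\cP^2_{\ac}(\overline{M_0},\omega)$ by Lemma~\ref{lm:K>0}) and any $\mu \in \cP^2(\overline{M_0})$ there is a minimal geodesic $(\mu_t)_{t \in [0,1]}$ from $\mu_0 = \nu$ to $\mu_1 = \mu$ along which
\[ H_m(\mu_t|\nu) \le (1-t)H_m(\nu|\nu) + tH_m(\mu|\nu) - \frac{K}{2}(1-t)t\,W_2(\nu,\mu)^2. \]
By Lemma~\ref{lm:Hm} we have $H_m(\nu|\nu) = 0$ (here $\nu(M)=1$ and $\nu \in \cP_{\ac}(M,\omega)$), so the right-hand side is $tH_m(\mu|\nu) - \frac{K}{2}(1-t)t\,W_2(\mu,\nu)^2$.

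The second step is to use the nonnegativity $H_m(\mu_t|\nu) \ge 0$ from Lemma~\ref{lm:Hm} on the left-hand side. This gives
\[ 0 \le tH_m(\mu|\nu) - \frac{K}{2}(1-t)t\,W_2(\mu,\nu)^2 \]
for all $t \in (0,1]$. Dividing by $t > 0$ and letting $t \downarrow 0$ yields $\frac{K}{2}W_2(\mu,\nu)^2 \le H_m(\mu|\nu)$, which is exactly the claimed inequality after rearranging. Actually one need not even take a limit: just fix, say, $t = 1$ is not allowed since the penalty vanishes, so taking $t \to 0$ (or any fixed $t$ and optimizing) is the clean route; the bound $W_2(\mu,\nu)^2 \le \frac{2}{K}H_m(\mu|\nu)$ drops out immediately.

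The main thing to check — and where a little care is needed — is that Theorem~\ref{th:mCD}(C) genuinely applies to the pair $(\nu,\mu)$. This requires $\nu, \mu \in \cP^2(\overline{M_0})$ and that any $x_0 \in \supp\nu = \overline{M_0}$ and $x_1 \in \supp\mu \subset \overline{M_0}$ are joined by a geodesic inside $\overline{M_0}$. For $m < 1$ we have $M_0 = M$, so there is nothing to verify beyond $\mu \in \cP^2(M)$, which is the hypothesis; that $\nu \in \cP^2(M)$ follows from Lemma~\ref{lm:K>0}(ii) (which needs $m > 1/2$ to guarantee a valid exponent $p \in [1, 1/(1-m))$ with $p \ge 1$, consistent with the standing assumption of the section). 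For $m > 1$, Lemma~\ref{lm:K>0}(iii) gives that $\overline{M_0} = \supp\nu$ is convex and bounded, so $\cP^2(\overline{M_0}) = \cP(\overline{M_0})$ and the geodesic-connectivity condition is automatic. Thus the only real obstacle is bookkeeping the hypotheses so that Theorem~\ref{th:mCD} can be invoked; once that is in place, the inequality follows in two lines from displacement convexity plus $H_m(\nu|\nu)=0$ and $H_m(\mu_t|\nu) \ge 0$.
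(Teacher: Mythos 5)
Your proposal is correct and follows essentially the same route as the paper: invoke the displacement $K$-convexity from Theorem~\ref{th:mCD}, use $H_m(\nu|\nu)=0$ and $H_m(\mu_t|\nu)\ge 0$ from Lemma~\ref{lm:Hm}, and take a limit of the convexity inequality (the paper runs the geodesic from $\mu$ to $\nu$ and lets $t\to 1$, while you reverse the orientation and let $t\to 0$, which is the same computation). Your extra bookkeeping that $\nu\in\cP^2(\overline{M_0})$ (via Lemma~\ref{lm:K>0}) and that $\overline{M_0}$ is geodesically convex matches the standing assumptions the paper sets up at the start of Section~\ref{sc:func}.
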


\begin{proof}
There is nothing to prove if $H_m(\mu|\nu)=\infty$, so that
we assume $H_m(\mu|\nu)<\infty$.
Let $(\mu_t)_{t \in [0,1]} \subset \cP^2(\overline{M_0})$ be the optimal transport
from $\mu_0=\mu$ to $\mu_1=\nu$.
It follows from $(\ref{eq:mCD})$ and $H_m(\nu|\nu)=0$ that
\begin{equation}\label{eq:TaH}
H_m(\mu_t|\nu) \le (1-t)H_m(\mu|\nu)-\frac{K}{2}(1-t)tW_2(\mu,\nu)^2.
\end{equation}
Since $H_m(\mu_t|\nu) \ge 0$ (Lemma~\ref{lm:Hm}), we obtain
$H_m(\mu|\nu) \ge (K/2)W_2(\mu,\nu)^2$ by dividing $(\ref{eq:TaH})$
with $1-t$ and letting $t$ go to $1$.
$\qedd$
\end{proof}

The above Talagrand inequality is regarded as a comparison between
distances in Wasserstein geometry and information geometry
(recall Subsection~\ref{ssc:ent}).

In the remainder of the section, let $\Psi$ be locally Lipschitz.
For $\mu=\rho\omega \in \cP^2_{\ac}(M,\omega)$ such that $\rho$ is locally Lipschitz,
we define the {\it $m$-relative Fisher information} by
\begin{equation}\label{eq:Im}
I_m(\mu|\nu) :=\frac{1}{m^2}
 \int_M \big| \nabla[e'_m(\rho)-e'_m(\sigma)] \big|^2 \rho \,d\omega
 =\frac{1}{(m-1)^2}
 \int_M \big| \nabla(\rho^{m-1}-\sigma^{m-1}) \big|^2 \,d\mu.
\end{equation}
It will be demonstrated in Proposition~\ref{pr:dHm} that $\sqrt{I_m(\mu|\nu)}$
is the absolute gradient of $H_m(\cdot|\nu)$ at $\mu$.
Thus it is natural to expect that the convexity of $H_m(\cdot|\nu)$ yields
the following inequality.

\begin{theorem}[HWI and Logarithmic Sobolev inequalities]\label{th:LS}
We assume that $m \in [(n-1)/n,\infty) \setminus \{1/2,1\}$, $\nu(M)=1$, $\Ric_N \ge 0$,
$\Hess \Psi \ge K>0$ and that $\Psi$ is locally Lipschitz.
Then we have, for any $\mu=\rho\omega \in \cP^2_{\ac}(\overline{M_0},\omega)$
such that $H_m(\mu|\nu)<\infty$ and $\rho$ is Lipschitz,
\begin{align}
H_m(\mu|\nu)
&\le \sqrt{I_m(\mu|\nu)} \cdot W_2(\mu,\nu) -\frac{K}{2}W_2(\mu,\nu)^2,
 \label{eq:HWI} \\
H_m(\mu|\nu) &\le \frac{1}{2K}I_m(\mu|\nu). \label{eq:LS}
\end{align}
\end{theorem}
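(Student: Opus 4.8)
The plan is to derive both inequalities from the displacement $K$-convexity of $H_m(\cdot|\nu)$. Under the standing hypotheses ($\Ric_N\ge 0$, $\Hess\Psi\ge K>0$, $\nu(M)=1$) condition (A) of Theorem~\ref{th:mCD} holds on $\overline{M_0}$, which is convex by Lemma~\ref{lm:K>0}(iii); hence $H_m(\cdot|\nu)$ satisfies \eqref{eq:mCD}, and by Lemma~\ref{lm:Hm} its minimum value $0$ is attained only at $\nu$. I would first establish the HWI inequality \eqref{eq:HWI} and then deduce the logarithmic Sobolev inequality \eqref{eq:LS} from it by a one-variable optimization. Both assertions are trivial when $H_m(\mu|\nu)=\infty$ or $I_m(\mu|\nu)=\infty$, so we may assume these are finite.

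\emph{HWI inequality.} Let $(\mu_t)_{t\in[0,1]}\subset\cP^2_{\ac}(\overline{M_0},\omega)$ be the unique Wasserstein geodesic from $\mu_0=\mu$ to $\mu_1=\nu$; by Theorem~\ref{th:FG} it is of the form $\mu_t=(\cT_t)_{\sharp}\mu$ with $\cT_t(x)=\exp_x(t\nabla\varphi(x))$, and \eqref{eq:mCD} holds along it. Put $g(t):=H_m(\mu_t|\nu)$. Since $g(1)=H_m(\nu|\nu)=0$, \eqref{eq:mCD} rearranges to
\[
 \frac{g(0)-g(t)}{t}\ \ge\ g(0)+\frac{K}{2}(1-t)W_2(\mu,\nu)^2,\qquad 0<t<1.
\]
The $K$-convexity of $g$ makes $t\mapsto g(t)-\tfrac{K}{2}W_2(\mu,\nu)^2t^2$ convex, so $g'(0^+)$ exists; I would compute its value by differentiating at $t=0$ the explicit expression
\[
 g(t)=\frac{1}{m(m-1)}\int_M\rho_0^{m-1}(\bJ_t^{\omega})^{1-m}\,d\mu_0
 -\frac{1}{m-1}\int_M\sigma(\cT_t)^{m-1}\,d\mu_0+\frac{1}{m}\int_M\sigma^m\,d\omega,
\]
using the Jacobian equation (Theorem~\ref{th:MA}), $\bJ_0^{\omega}=1$, $\tfrac{d}{dt}\bJ_t^{\omega}|_{t=0}=\div_{\omega}(\nabla\varphi)$, and one integration by parts against $\omega$; this gives
\[
 g'(0^+)=\frac{1}{m-1}\int_M\big\langle\nabla(\rho^{m-1}-\sigma^{m-1}),\nabla\varphi\big\rangle\,d\mu.
\]
By the Cauchy--Schwarz inequality, the identity $\int_M|\nabla\varphi|^2\,d\mu=W_2(\mu,\nu)^2$ and the definition \eqref{eq:Im} of $I_m$, we get $-g'(0^+)\le\sqrt{I_m(\mu|\nu)}\,W_2(\mu,\nu)$. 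Letting $t\downarrow 0$ in the displayed inequality then yields $H_m(\mu|\nu)+\tfrac{K}{2}W_2(\mu,\nu)^2\le\sqrt{I_m(\mu|\nu)}\,W_2(\mu,\nu)$, which is \eqref{eq:HWI}.

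\emph{Logarithmic Sobolev inequality.} Regard the right-hand side of \eqref{eq:HWI} as the quadratic $r\mapsto\sqrt{I_m(\mu|\nu)}\,r-\tfrac{K}{2}r^2$ in $r=W_2(\mu,\nu)\ge 0$; since $K>0$ it attains its maximum $I_m(\mu|\nu)/(2K)$ at $r=\sqrt{I_m(\mu|\nu)}/K$, whence $H_m(\mu|\nu)\le\tfrac{1}{2K}I_m(\mu|\nu)$. Combining \eqref{eq:HWI} with the Talagrand inequality (Proposition~\ref{pr:Ta}) also gives \eqref{eq:LS}, but with a worse constant.

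\emph{Main obstacle.} The delicate step is the rigorous justification of the first-variation formula for $g$: to differentiate under the integral sign at $t=0^+$ one needs the a.e.\ twice differentiability of $\varphi$ (Theorem~\ref{th:FG}), the Jacobian equation with $\bJ_t^{\omega}>0$ (Theorem~\ref{th:MA}), the convexity of $t\mapsto(\bJ_t^{\omega})^{1-m}$ (Claim~\ref{cl:Jt}) and the $K$-convexity of $\Psi\circ\gamma$ (which make the relevant difference quotients monotone), together with the Lipschitz hypotheses on $\rho$ and $\Psi$ and the finiteness of $H_m(\mu|\nu)$ and $I_m(\mu|\nu)$ to supply integrable bounds; when $m>1$ one must also check that the integration by parts over $M_0$ produces no boundary term, which follows from $\sigma=0$ on $\partial M_0$. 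Alternatively, once $\sqrt{I_m(\mu|\nu)}$ is identified with the descending slope $|\nabla^-H_m(\cdot|\nu)|(\mu)$ (Proposition~\ref{pr:dHm}), \eqref{eq:HWI} follows at once from the general fact that a lower semicontinuous, displacement $K$-convex functional $F$ satisfies $F(\mu)-\min F\le|\nabla^-F|(\mu)\,W_2(\mu,\nu)-\tfrac{K}{2}W_2(\mu,\nu)^2$ whenever the minimum of $F$ is attained at $\nu$.
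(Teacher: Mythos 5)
Your overall strategy is the same as the paper's: run the displacement $K$-convexity \eqref{eq:mCD} along the unique geodesic $\mu_t=(\cT_t)_\sharp\mu$ from $\mu$ to $\nu$, use $H_m(\nu|\nu)=0$ to isolate the difference quotient $(H(0)-H(t))/t$, bound its limsup by $\sqrt{I_m(\mu|\nu)}\,W_2(\mu,\nu)$ via Cauchy--Schwarz and $\int_M|\nabla\varphi|^2\,d\mu=W_2(\mu,\nu)^2$, and then obtain \eqref{eq:LS} from \eqref{eq:HWI} by maximizing the quadratic in $W_2$. The genuine gap lies in the one step where you deviate: you propose to compute $g'(0^+)$ \emph{exactly} by differentiating $\int_M\rho^{m-1}(\bJ_t^{\omega})^{1-m}\,d\mu-\frac{1}{m-1}\int_M\sigma(\cT_t)^{m-1}\,d\mu$ under the integral sign, and you yourself flag the justification as the ``main obstacle'' without supplying it. In the setting of Theorem~\ref{th:LS} this is not a routine verification: $M$ is not assumed compact (for $m<1$ one has $M_0=M$, e.g.\ $\nu$ an $m$-Gaussian on $\R^n$), $\varphi$ is only locally semi-convex with a.e.\ defined Hessian, no integrable majorant for the difference quotients of $(\bJ_t^{\omega})^{1-m}$ or of $\sigma(\cT_t)^{m-1}$ is exhibited, and for $m<1$ the function $\rho^{m-1}$ is not controlled where $\rho$ is small even though $\rho$ is Lipschitz. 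Your fallback through the slope identification of Proposition~\ref{pr:dHm} does not repair this as stated, because that proposition is proved in the paper only under the standing compactness assumption of Section~\ref{sc:gf}.

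The paper's own proof shows that the exact first variation is unnecessary, and this is precisely what makes the argument go through with the stated hypotheses: one only needs an \emph{upper} bound on $\limsup_{t\to 0}(H(0)-H(t))/t$. Convexity of $s\mapsto s^m/(m-1)$ gives $H(0)-H(t)\le\frac{1}{m-1}\int_M(\rho^{m-1}-\sigma^{m-1})(\rho-\rho_t)\,d\omega$; rewriting the $\rho_t$-term by the push-forward $(\cT_t)_\sharp\mu=\mu_t$ turns this into $\frac{1}{m-1}\int_M\{h-h(\cT_t)\}\,d\mu$ with $h=\rho^{m-1}-\sigma^{m-1}$, and since $\cT_t(x)$ moves along a geodesic of length $d(\cT_0(x),\cT_1(x))$ one bounds the quotient by $\frac{1}{|m-1|}\int_M|\nabla h|\,d(\cT_0,\cT_1)\,d\mu$, after which Cauchy--Schwarz gives exactly $\sqrt{I_m(\mu|\nu)}\,W_2(\mu,\nu)$ --- no Jacobian equation, no differentiation under the integral, no integration by parts, no compactness. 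To make your version complete you would have to either add the integrability/compactness hypotheses needed to justify the exact derivative (dominated convergence for the Jacobian term, absence of boundary contributions for $m>1$), or replace the exact computation by this one-sided estimate, which is the route the paper takes.
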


\begin{proof}
Let $\mu_t=\rho_t \omega \in \cP^2_{\ac}(\overline{M_0},\omega)$, $t \in [0,1]$,
be the optimal transport from $\mu_0=\mu$ to $\mu_1=\nu$ given by
$\mu_t=(\cT_t)_{\sharp}\mu$ with $\cT_t(x)=\exp_x(t\nabla\varphi(x))$,
and put $H(t):=H_m(\mu_t|\nu)$.
Then it follows from $(\ref{eq:TaH})$ that
\begin{equation}\label{eq:LSH}
H(0) \le \frac{H(0)-H(t)}{t}-\frac{K}{2}(1-t)W_2(\mu,\nu)^2.
\end{equation}
We shall estimate the term
\[ H(0)-H(t)=\frac{1}{m(m-1)}\int_M
 \{ (\rho^m-\rho_t^m)-m(\rho-\rho_t)\sigma^{m-1} \} \,d\omega. \]
Since the function $f(s):=s^m/(m-1)$ is convex, we have
\[ \frac{\rho^m-\rho_t^m}{m-1} \le f'(\rho)(\rho-\rho_t)
 =\frac{m}{m-1}\rho^{m-1}(\rho-\rho_t), \]
and hence
\[ H(0)-H(t) \le \frac{1}{m-1}\int_M
 (\rho^{m-1}-\sigma^{m-1})(\rho-\rho_t) \,d\omega. \]
As $(\cT_t)_{\sharp}\mu=\mu_t$, we observe
\[ \int_M (\rho^{m-1}-\sigma^{m-1}) \rho_t \,d\omega
 =\int_M \{ \rho(\cT_t)^{m-1}-\sigma(\cT_t)^{m-1} \} \,d\mu. \]
This yields
\[ H(0)-H(t) \le \frac{1}{m-1}\int_M \big\{
 (\rho^{m-1}-\sigma^{m-1})-\big( \rho(\cT_t)^{m-1}-\sigma(\cT_t)^{m-1} \big)
 \big\} \,d\mu. \]
Thus we obtain
\begin{align*}
&\limsup_{t \to 0}\frac{H(0)-H(t)}{t}
 \le \frac{1}{|m-1|}\int_M |\nabla(\rho^{m-1}-\sigma^{m-1})|
 \cdot d(\cT_0,\cT_1) \,d\mu \\
&\le \frac{1}{|m-1|}
 \bigg( \int_M |\nabla(\rho^{m-1}-\sigma^{m-1})|^2 \,d\mu \bigg)^{1/2}
 \bigg( \int_M d(\cT_0,\cT_1)^2 \,d\mu \bigg)^{1/2} \\
&= \sqrt{I_m(\mu|\nu)} \cdot W_2(\mu,\nu).
\end{align*}
Combining this with $(\ref{eq:LSH})$, we conclude that
\[ H_m(\mu|\nu)
 \le \sqrt{I_m(\mu|\nu)} \cdot W_2(\mu,\nu) -\frac{K}{2}W_2(\mu,\nu)^2
 \le \frac{1}{2K}I_m(\mu|\nu). \]
$\qedd$
\end{proof}

\begin{remark}\label{rm:func}
It is established in \cite{Ta2} that, in the Euclidean space
$(M,\omega)=(\R^n,dx)$, equality of $(\ref{eq:HWI})$ and $(\ref{eq:LS})$
is characterized by using $m$-Gaussian measures.
\end{remark}

We finally show a kind of Poincar\'e inequality.
Observe that letting $m=1$ recovers the usual global Poincar\'e inequality
$\int_M f^2 \,d\nu \le K^{-1}\int_M |\nabla f|^2 \,d\nu$.

\begin{proposition}[Global Poincar\'e inequality]\label{pr:Poin}
Assume that $(M,g)$ is compact, $m \in [(n-1)/n,\infty) \setminus \{1/2,1\}$,
$\nu(M)=1$, $\Ric_N \ge 0$, $\Hess \Psi \ge K>0$ and that $\Psi$ is Lipschitz.
Then, for any Lipschitz function $f:\overline{M_0} \lra \R$ such that
$\int_{\overline{M_0}} f \,d\nu=0$, we have
\[ \int_M f^2 \sigma^{m-1} \,d\nu
 \le \frac{1}{K}\int_M |\nabla( f\sigma^{m-1})|^2 \,d\nu. \]
\end{proposition}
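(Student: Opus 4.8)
The plan is to obtain the inequality by linearizing the logarithmic Sobolev inequality \eqref{eq:LS} around the reference measure $\nu$. We may assume $f\not\equiv0$ on $\overline{M_0}$, the inequality being trivial otherwise. Since $(M,g)$ is compact, $f$ and $|\nabla f|$ are bounded, and since $\Psi$ is Lipschitz, so are the bounded functions $\sigma=\exp_m(-\Psi)$ and $\sigma^{m-1}=1-(m-1)\Psi$. Hence for every sufficiently small $\ve>0$ the measure
\[ \mu_{\ve}:=(1+\ve f)\,\nu=\rho_{\ve}\omega,\qquad \rho_{\ve}:=(1+\ve f)\sigma, \]
has nonnegative density; it satisfies $\mu_{\ve}(M)=\nu(M)+\ve\int_{\overline{M_0}}f\,d\nu=1$, is supported in $\overline{M_0}$ (where $\sigma>0$ only on $M_0$), lies in $\cP^2_{\ac}(\overline{M_0},\omega)$ and has $H_m(\mu_{\ve}|\nu)<\infty$ by compactness, and the functions $\rho_{\ve}^{m-1}-\sigma^{m-1}$ and $f\sigma^{m-1}$ are Lipschitz. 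Thus Theorem~\ref{th:LS} applies and gives $H_m(\mu_{\ve}|\nu)\le(2K)^{-1}I_m(\mu_{\ve}|\nu)$ for all small $\ve>0$.

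Next I would expand the entropy in $\ve$. Writing the integrand of $H_m(\mu_{\ve}|\nu)$ as $\sigma^m\bigl\{(1+\ve f)^m-m(1+\ve f)+(m-1)\bigr\}/\{m(m-1)\}$ and using the uniform estimate $(1+s)^m-ms-1=\tfrac{m(m-1)}{2}s^2+O(s^3)$ for $s$ in a bounded interval, together with the boundedness of $\sigma^m$ and $f$ on the compact $M$, one gets
\[ H_m(\mu_{\ve}|\nu)=\frac{\ve^2}{2}\int_M f^2\sigma^{m-1}\,d\nu+O(\ve^3). \]
For the Fisher information I would start from $\rho_{\ve}^{m-1}-\sigma^{m-1}=\sigma^{m-1}\bigl\{(1+\ve f)^{m-1}-1\bigr\}$, differentiate by the product rule, and use $(1+\ve f)^{m-1}-1=(m-1)\ve f+O(\ve^2)$ and the analogous expansion of its derivative, together with the boundedness of $\sigma^{m-1}$, $\nabla\sigma^{m-1}$, $f$, $\nabla f$, to get $\nabla(\rho_{\ve}^{m-1}-\sigma^{m-1})=(m-1)\ve\,\nabla(f\sigma^{m-1})+O(\ve^2)$; combined with $d\mu_{\ve}=(1+\ve f)\,d\nu=d\nu+O(\ve)$ and the finiteness of $\nu$ this yields
\[ I_m(\mu_{\ve}|\nu)=\ve^2\int_M|\nabla(f\sigma^{m-1})|^2\,d\nu+O(\ve^3). \]

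Finally, substituting these two expansions into $H_m(\mu_{\ve}|\nu)\le(2K)^{-1}I_m(\mu_{\ve}|\nu)$, dividing by $\ve^2/2>0$ and letting $\ve\to0$ gives
\[ \int_M f^2\sigma^{m-1}\,d\nu\le\frac{1}{K}\int_M|\nabla(f\sigma^{m-1})|^2\,d\nu, \]
which is the assertion (the passage from $\int f^2\sigma^m\,d\omega$ to $\int f^2\sigma^{m-1}\,d\nu$ uses $d\nu=\sigma\,d\omega$). The only genuine work is making the $O(\ve^3)$ and $O(\ve)$ controls rigorous and justifying the interchange of limit and integral; given the compactness of $M$ and the Lipschitz bounds on $f$, $\Psi$ and hence $\sigma^{m-1}$, this is routine bookkeeping rather than a serious obstacle. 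One mild point I would flag: for $m>2$ the density $\sigma$ itself need not be Lipschitz near $\partial M_0$, but this is harmless, since both the proof of Theorem~\ref{th:LS} and the statement above involve only $\rho_{\ve}^{m-1}-\sigma^{m-1}$ and $f\sigma^{m-1}$, which remain Lipschitz for every admissible $m$.
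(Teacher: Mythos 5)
Your proposal is correct and follows essentially the same route as the paper: apply the logarithmic Sobolev inequality \eqref{eq:LS} to $\mu=(1+\ve f)\sigma\omega$, expand the entropy to second order in $\ve$ (using compactness for the uniform $O(\ve^3)$ control) and the Fisher information via $\nabla(\rho^{m-1}-\sigma^{m-1})=(m-1)\ve\nabla(f\sigma^{m-1})+O(\ve^2)$, then divide by $\ve^2$ and let $\ve\to0$. Your remark that only $\sigma^{m-1}=1-(m-1)\Psi$ and $f\sigma^{m-1}$, not $\sigma$ itself, need be Lipschitz is a sensible clarification of the hypotheses of Theorem~\ref{th:LS}, but the argument is the same as the paper's.
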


\begin{proof}
Apply (\ref{eq:LS}) to $\mu=\rho \omega:=(1+\ve f)\sigma\omega$
for small $\ve >0$ and obtain
\[ \frac{1}{m(m-1)}
 \int_M \{ \rho^m-m\rho\sigma^{m-1}+(m-1)\sigma^m \} \,d\omega
 \le \frac{1}{2K} \frac{1}{(m-1)^2}
 \int_M |\nabla(\rho^{m-1}-\sigma^{m-1})|^2 \,d\mu. \]
We remark that $H_m(\mu|\nu)<\infty$ as $M$ is compact.
On the one hand,
\begin{align*}
\rho^m-m\rho\sigma^{m-1}+(m-1)\sigma^m
&= (1+\ve f)^m \sigma^m-m(1+\ve f) \sigma^m +(m-1)\sigma^m \\
&= \sigma^m\{ (1+\ve f)^m-1-m(\ve f) \} \\
&= m(m-1)\sigma^m \frac{f^2}{2}\ve^2 +O(\ve^3),
\end{align*}
where $O(\ve^3)$ is uniform on $M$ thanks to the compactness of $M$.
On the other hand,
\begin{align*}
|\nabla(\rho^{m-1}-\sigma^{m-1})|^2
&= \big| \nabla\big[ \big( (1+\ve f)^{m-1}-1 \big) \sigma^{m-1} \big] \big|^2 \\
&= |\nabla[(m-1)f\ve \sigma^{m-1}] +O(\ve^2) |^2 \\
&= (m-1)^2 \ve^2 |\nabla(f\sigma^{m-1})|^2 +O(\ve^3).
\end{align*}
Thus we have, letting $\ve$ go to zero,
\[ \int_M f^2 \sigma^m \,d\omega
 \le \frac{1}{K}\int_M |\nabla(f\sigma^{m-1})|^2 \,d\nu. \]
$\qedd$
\end{proof}

\section{Concentration of measures}\label{sc:conc}

This section is devoted to an application of Proposition~\ref{pr:Ta}
to the concentration of measures.
Let us assume $\nu(M)=1$ and define the {\it concentration function} by
\[ \alpha_{(M,\nu)}(r):=\sup \big\{ 1-\nu\big( B(A,r) \big) \,|\,
 A \subset M,\ \nu(A) \ge 1/2 \big\} \]
for $r>0$, where $A$ is any measurable set and
\[ B(A,r):=\{ y \in M \,|\, \inf_{x \in A}d(x,y)<r \}. \]
The function $\alpha_{(M,\nu)}$ describes how the probability measure
$\nu$ concentrates on the neighborhood of an arbitrary set of half
the total measure in a quantitative way
(in other words, a kind of large deviation principle).
An especially interesting situation is that a sequence $\{(M_i,\nu_i)\}_{i \in \N}$
satisfies $\lim_{i \to \infty}\alpha_{(M_i,\nu_i)}(r)=0$ for all $r>0$,
that means that $(M_i,\nu_i)$ is getting more and more concentrated.
We refer to \cite{Le} for the basic theory and applications
of the concentration of measure phenomenon.

In the classical case of $m=1$, it is well-known that the concentration
of measures has rich connections with functional inequalities
appearing in Section~\ref{sc:func}.
For instance, the $L^1$-transport inequality
$W_1(\mu,\nu) \le \sqrt{(2/K)\Ent_{\nu}(\mu)}$
implies the {\it normal concentration} $\alpha(r) \le Ce^{-cr^2}$ with
constants $c,C>0$ depending only on $K$ (\cite[Section~6.1]{Le}).
In the same spirit, we show that an application of Proposition~\ref{pr:Ta}
gives new examples of concentrating spaces.

We set $G_c=G_c(\nu):=\int_M \sigma^c \,d\omega$ for $c>1/2$.
Recall from Lemma~\ref{lm:K>0}(i) that, if $m<1$, $\Ric_N \ge 0$
and if $\Hess\Psi \ge K>0$, then
\begin{equation}\label{eq:Gc}
G_c(\nu) \le C_1(\omega)^{1-c}\nu(M)^c +C_2(m,c,\omega)K^{c/(m-1)}<\infty
\end{equation}
holds for each $c \in (1/2,1]$.

\begin{theorem}[$m<1$ case]\label{th:conc}
Let $(M,\omega)$ satisfy $\Ric_N \ge 0$ and $m \in [(n-1)/n,1) \cap (1/2,1)$.
\begin{enumerate}[{\rm (i)}]
\item Assume that $\nu(M)=1$ and $\Hess\Psi \ge K>0$.
Then we have
\begin{equation}\label{eq:conc}
\alpha_{(M,\nu)}(r)^{\theta -m} \ln_m\big( 2\alpha_{(M,\nu)}(r) \big)
 \le -G_{(m-\theta)/(1-\theta)}^{\theta-1}
 \bigg\{ \bigg( \sqrt{\frac{mK}{2}}r-\sqrt{G_m} \bigg)^2 -G_m \bigg\}
\end{equation}
for all $r>0$ and $\theta \in [0,2m-1)$.

\item Take a sequence $\nu_i=\exp_m(-\Psi_i)\omega \in \cP_{\ac}(M,\omega)$,
$i \in \N$, such that $\Hess\Psi_i \ge K_i$ and $\lim_{i \to \infty}K_i=\infty$.
Then we have $\lim_{i \to \infty}\alpha_{(M,\nu_i)}(r)=0$ for all $r>0$.
\end{enumerate}
\end{theorem}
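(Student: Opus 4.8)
The plan is to derive the concentration estimate directly from the Talagrand inequality (Proposition~\ref{pr:Ta}) by testing it against a suitable normalized restriction of $\nu$, and then to obtain the convergence statement (ii) as a limiting consequence of (i). For part (i), fix a measurable set $A\subset M$ with $\nu(A)\ge 1/2$ and set $r'\in(0,r)$. Write $B:=B(A,r')$ and let $C:=M\setminus B$, so that $\nu(C)=1-\nu(B)$; note $d(A,C)\ge r'$ and hence $W_2\big(\nu|_A/\nu(A),\nu|_C/\nu(C)\big)\ge r'$ by the trivial lower bound on transport cost, provided $\nu(C)>0$. The idea is to apply Proposition~\ref{pr:Ta} to the probability measure $\mu:=\nu|_C/\nu(C)=\rho\omega$, where $\rho=\sigma\chi_C/\nu(C)$, and compute $H_m(\mu|\nu)$ explicitly. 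Using the closed form
\[
H_m(\mu|\nu)=\frac{1}{m(m-1)}\int_M\{\rho^m-m\rho\sigma^{m-1}+(m-1)\sigma^m\}\,d\omega,
\]
the cross term $\int_M \rho\sigma^{m-1}\,d\omega=\int_C \sigma^m\,d\omega/\nu(C)$ and the term $\int_M\rho^m\,d\omega=\nu(C)^{-m}\int_C\sigma^m\,d\omega$ both reduce to partial integrals of $\sigma^m$ over $C$; bounding these by $G_m=\int_M\sigma^m\,d\omega$ and $G_m$ from \eqref{eq:Gc}, and using $m<1$ to control signs, yields an upper bound for $H_m(\mu|\nu)$ of the shape $G_m(\nu(C)^{m-1}-\text{const})/(m(1-m))$ up to the additive $G_m/m$ piece, i.e.\ essentially $-\nu(C)^{m-1}\ln_m(\nu(C))$ times a constant plus a $G_m$-term. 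Feeding this into $W_2(\mu,\nu)\le\sqrt{(2/K)H_m(\mu|\nu)}$ together with the triangle-type lower bound $W_2(\mu,\nu)\ge W_2(\mu|_A\text{-part},\dots)$ — more precisely using $r'\le W_2(\nu|_A/\nu(A),\mu)+W_2(\mu,\nu)$ or a direct coupling argument — produces an inequality relating $r$ and $\nu(C)=1-\nu(B(A,r'))$; taking the supremum over $A$ and letting $r'\uparrow r$ gives a bound on $\alpha_{(M,\nu)}(r)$.

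The parameter $\theta\in[0,2m-1)$ enters through a Hölder interpolation: rather than bounding the partial integral $\int_C\sigma^m\,d\omega$ crudely by $G_m$, one writes $\sigma^m=\sigma^{m-\theta}\cdot\sigma^\theta$ (heuristically) or, more carefully, applies Hölder to split $\int_C\sigma^m\,d\omega\le\big(\int_C\sigma^{(m-\theta)/(1-\theta)}\,d\omega\big)^{1-\theta}\big(\int_C\sigma\,d\omega\big)^{\theta}=G_{(m-\theta)/(1-\theta)}^{1-\theta}\,\nu(C)^{\theta}$, where the exponent $(m-\theta)/(1-\theta)>1/2$ exactly when $\theta<2m-1$, so that \eqref{eq:Gc} applies and $G_{(m-\theta)/(1-\theta)}$ is finite. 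This is the mechanism that turns the raw estimate into the stated form with $\ln_m(2\alpha)$ and the factor $\alpha^{\theta-m}$; the factor $2$ inside $\ln_m$ comes from $\nu(B)\ge\nu(A)\ge 1/2$, so $1-\nu(B)=\nu(C)\le 1/2$ and one rescales by $2$ to land in the right branch of $\ln_m$. Rearranging to isolate $\alpha_{(M,\nu)}(r)$ and using $r'\to r$ yields \eqref{eq:conc}; I would double-check the algebra so that the square $\big(\sqrt{mK/2}\,r-\sqrt{G_m}\big)^2-G_m$ emerges correctly from completing the square in the quantity $(K/2)r^2$ versus the $G_m$-contributions to $H_m$.

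For part (ii), the point is that $\Hess\Psi_i\ge K_i$ with $K_i\to\infty$ forces, via \eqref{eq:Gc}, that $G_m(\nu_i)=\int_M\sigma_i^m\,d\omega\le C_1^{1-m}\nu_i(M)^m+C_2K_i^{m/(m-1)}$; since $m/(m-1)<0$ for $m<1$, the second term tends to $0$, and (after normalizing to $\nu_i(M)=1$, using Remark~\ref{rm:multi} to rescale $\Psi_i$, which only improves the convexity constant) one gets $G_m(\nu_i)\to 0$ as well. Similarly each $G_{(m-\theta)/(1-\theta)}(\nu_i)$ stays bounded (or tends to $0$). Plugging these into \eqref{eq:conc} with, say, $\theta=0$: the right-hand side becomes $-G_m^{-1}\{(\sqrt{mK_i/2}\,r-\sqrt{G_m})^2-G_m\}$, which for fixed $r>0$ and $K_i\to\infty$, $G_m\to 0$ behaves like $-G_m^{-1}\cdot\tfrac{m K_i}{2}r^2\to-\infty$, while the left-hand side $\alpha^{-m}\ln_m(2\alpha)$ is bounded below on $\alpha\in(0,1/2]$ only if $\alpha$ is bounded away from $0$ — hence $\alpha_{(M,\nu_i)}(r)\to 0$. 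The main obstacle is the careful bookkeeping in the first step: verifying that the restriction $\mu=\nu|_C/\nu(C)$ indeed lies in $\cP^2(\overline{M_0})$ with $H_m(\mu|\nu)<\infty$ (which follows from $\sigma\in L^m$, already guaranteed by Lemma~\ref{lm:K>0}(i)), and more delicately, establishing the transport lower bound $W_2(\mu,\nu)+[\text{something}]\ge r'$ in a way that survives the supremum over $A$ — one may prefer to bound $W_1$ or to couple $\nu$ with $\mu$ directly, exploiting that $\nu$ restricted to $B(A,r')$ is far from $C$. Getting the constants to match the precise form of \eqref{eq:conc} exactly is where the real work lies; the rest is Hölder and elementary asymptotics.
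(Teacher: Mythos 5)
Your overall strategy is the paper's (Talagrand inequality plus a H\"older interpolation in $\theta$, with (ii) deduced from (i)), and your H\"older step $\int_B\sigma^m\,d\omega\le b^{\theta}G_{(m-\theta)/(1-\theta)}^{1-\theta}$ for $\theta<2m-1$ is exactly right, but the quantitative core of (i) is missing. You apply Proposition~\ref{pr:Ta} only to $\mu=\chi_B\nu/\nu(B)$ and leave the transport lower bound to ``a direct coupling argument'' or to $r'\le W_2(\nu|_A/\nu(A),\mu)+W_2(\mu,\nu)$, which is not the inequality you need (its first term is the quantity you would have to control). A direct coupling bound only yields $W_2(\mu,\nu)\ge W_1(\mu,\nu)\ge \nu(A)\,r\ge r/2$, which gives an estimate with $r/2$ in place of $r-\sqrt{2G_m/mK}$ and cannot produce \eqref{eq:conc} as stated. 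The paper instead applies Proposition~\ref{pr:Ta} to \emph{both} normalized restrictions $\mu_A=\chi_A\nu/\nu(A)$ and $\mu_B=\chi_B\nu/\nu(B)$ with $B=M\setminus B(A,r)$, via
\[
r\le W_1(\mu_A,\mu_B)\le W_1(\mu_A,\nu)+W_1(\nu,\mu_B)
\le\sqrt{\tfrac{2}{K}H_m(\mu_A|\nu)}+\sqrt{\tfrac{2}{K}H_m(\mu_B|\nu)},
\]
combined with the key estimate
\[
H_m(\mu_A|\nu)=\frac{1}{m(1-m)}\int_A\frac{ma^{m-1}-1}{a^m}\,\sigma^m\,d\omega+\frac{1}{m}G_m\le\frac{1}{m}G_m,
\]
valid because $ma^{m-1}-1<0$ when $a=\nu(A)\ge 1/2>m^{1/(1-m)}$. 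This is precisely what produces the $\sqrt{G_m}$ shift, i.e.\ the completed square $(\sqrt{mK/2}\,r-\sqrt{G_m})^2-G_m$; without it one only obtains a weaker concentration bound with different constants. One also has to dispose of small $r$ separately (for $r\le 2\sqrt{2G_m/mK}$ the right-hand side of \eqref{eq:conc} is nonnegative while the left-hand side is nonpositive), since the subsequent sign analysis $0<mb^{m-1}-1<(2b)^{m-1}-1$, which converts the bound into one involving $\ln_m(2b)$, requires $\sqrt{mK/2}\,r>2\sqrt{G_m}$. These are exactly the steps you defer (``where the real work lies''), so the proposal as written does not establish \eqref{eq:conc}.

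In (ii) there is a slip: $G_m(\nu_i)\to 0$ is false, since \eqref{eq:Gc} gives $G_c(\nu_i)\le C_1^{1-c}\nu_i(M)^c+C_2K_i^{c/(m-1)}$ and the $K$-independent first term does not vanish; one only gets $\limsup_i G_c(\nu_i)\le C_1^{1-c}<\infty$ for $c\in(1/2,1]$. Fortunately only this boundedness is needed: with $\theta=0$ the right-hand side of \eqref{eq:conc} tends to $-\infty$ as $K_i\to\infty$, forcing $\alpha_i^{-m}\ln_m(2\alpha_i)\to-\infty$ and hence $\alpha_i\to 0$, which is exactly the paper's argument, so (ii) goes through once (i) is secured.
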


\begin{proof}
(i) Note that $\nu \in \cP_{\ac}^2(M,\omega)$ by Lemma~\ref{lm:K>0}(ii) and $m>1/2$.
We also remark that $(\ref{eq:conc})$ clearly holds for
$r \le 2\sqrt{2G_m/mK}$.
Indeed, then the right-hand side is nonnegative and the trivial bound
$\alpha_{(M,\nu)}(r) \le 1/2$ implies $\ln_m(2\alpha_{(M,\nu)}(r)) \le 0$.

Suppose $r>2\sqrt{2G_m/mK}$, take a measurable set $A \subset M$ with
$\nu(A) \ge 1/2$ and put $B:=M \setminus B(A,r)$, $a:=\nu(A)$, $b:=\nu(B)$,
\[ \mu_A:=\frac{\chi_A}{a}\nu,\qquad \mu_B:=\frac{\chi_B}{b}\nu. \]
We assumed $b>0$ since there is nothing to prove if $b=0$ for all such $A$.
Observe that $W_1(\mu_A,\mu_B) \ge r$ as $d(x,y) \ge r$ for all $x \in A$ and $y \in B$.
The triangle inequality of $W_1$ and Proposition~\ref{pr:Ta} together imply
(as $W_1 \le W_2$ by the Schwarz inequality)
\[ r \le W_1(\mu_A,\mu_B) \le W_1(\mu_A,\nu)+W_1(\nu,\mu_B)
 \le \sqrt{\frac{2}{K}H_m(\mu_A|\nu)} +\sqrt{\frac{2}{K}H_m(\mu_B|\nu)}. \]
Note that
\[ H_m(\mu_A|\nu) =\frac{1}{m(1-m)}
 \int_A \frac{ma^{m-1}-1}{a^m} \sigma^m \,d\omega +\frac{1}{m}G_m \]
and $ma^{m-1}-1<0$ since $a \ge 1/2>m^{1/(1-m)}$.
Thus we obtain
\[ \sqrt{\frac{mK}{2}}r \le  \sqrt{G_m}
 +\sqrt{G_m+b^{-m} \frac{mb^{m-1}-1}{1-m} \int_B \sigma^m \,d\omega}. \]
We observe from $r>2\sqrt{2G_m/mK}$ that $\sqrt{mK/2}r>2\sqrt{G_m}$
which yields $0<mb^{m-1}-1 <(2b)^{m-1}-1$.
Hence we have
\begin{equation}\label{eq:con}
\bigg( \sqrt{\frac{mK}{2}}r-\sqrt{G_m} \bigg)^2 -G_m
 \le -b^{-m} \ln_m(2b) \int_B \sigma^m \,d\omega.
\end{equation}
It follows from the H\"older inequality that
\[ \int_B \sigma^m \,d\omega
 =\int_B \sigma^{\theta+(m-\theta)} \,d\omega
 \le \bigg( \int_B \sigma \,d\omega \bigg)^{\theta}
 \bigg( \int_B \sigma^{(m-\theta)/(1-\theta)} \,d\omega \bigg)^{1-\theta}
 \le b^{\theta} G_{(m-\theta)/(1-\theta)}^{1-\theta}, \]
where the assumption $\theta<2m-1$ ensures $(m-\theta)/(1-\theta) >1/2$.
Therefore we obtain the desired inequality $(\ref{eq:conc})$
by choosing $A_i \subset M$ such that
$\lim_{i \to \infty} \nu(M \setminus B(A_i,r)) =\alpha_{(M,\nu)}(r)$.

(ii) Thanks to $(\ref{eq:Gc})$, we know that
\[ \limsup_{i \to \infty} G_c(\nu_i) \le C_1(\omega)^{1-c} <\infty \]
for all $c \in (1/2,1]$.
Therefore we deduce from (i) with $\theta =0$ that, setting $\alpha_i:=\alpha_{(M,\nu_i)}(r)$,
\[ -\infty =\lim_{i \to \infty} \alpha_i^{-m} \ln_m(2\alpha_i)
 =-\lim_{i \to \infty} \frac{\alpha_i^{-1}}{2^{1-m}}
 \frac{1-(2\alpha_i)^{1-m}}{1-m} \]
which shows $\lim_{i \to \infty} \alpha_i=0$.
$\qedd$
\end{proof}

\begin{remark}\label{rm:conc}
(1) Taking the proof of Lemma~\ref{lm:K>0}(i) into account,
we can generalize Theorem~\ref{th:conc}(ii) as follows.
Suppose that a sequence $\{ (M_i,\omega_i,\nu_i) \}_{i \in \N}$ satisfies,
for $m \in [(n-1)/n,1) \cap (1/2,1)$,
\begin{enumerate}[(a)]
\item $\Ric_N \ge 0$ for all $(M_i,\omega_i)$,
\item $\nu_i=\exp_m(-\Psi_i)\omega_i \in \cP_{\ac}(M_i,\omega_i)$
so that $\Hess\Psi_i \ge K_i$ and $\lim_{i \to \infty}K_i=\infty$,
\item $\sup_{i \in \N} \omega_i(B(x_i,R))<\infty$ and
$\sup_{i \in \N} \area_{\omega_i}(S(x_i,R))<\infty$ for some $R>0$,
where $x_i \in M_i$ is the minimizer of $\Psi_i$.
\end{enumerate}
Then we have $\lim_{i \to \infty}\alpha_{(M_i,\nu_i)}(r)=0$ for all $r>0$.

(2) Taking the limit of $(\ref{eq:conc})$ as $m \to 1$ and then $\theta \to 1$,
we obtain
\[ \ln\big( 2\alpha(r) \big) \le -\bigg( \sqrt{\frac{K}{2}}r-1 \bigg)^2 +1. \]
Here $\lim_{c \to 1}G_c=G_1=1$ follows from the dominated convergence
theorem since $\sigma^c \le \max\{ \sigma,\sigma^{c_0} \} \in L^1(M,\omega)$
for $1/2<c_0 \le c<1$.
Therefore we recover the normal concentration
\[ \alpha(r) \le \frac{1}{2} \exp\bigg[ -\bigg( \sqrt{\frac{K}{2}}r-1 \bigg)^2+1 \bigg]
 \le \frac{1}{2} e^{-Kr^2/4+2} \]
which is well-known to hold for $(M,\omega)$ with $\Ric_{\infty} \ge K>0$.
\end{remark}

Theorem~\ref{th:conc}(ii) is applicable to the fundamental example of
$m$-Gaussian measures (see Example~\ref{ex:Nm}).

\begin{example}\label{ex:conc}
Let $\{ N_m(v_i,V_i) \}_{i \in \N} \subset \cP_{\ac}^2(\R^n,dx)$ be a sequence of
$m$-Gaussian measures with $m \in [(n-1)/n,1) \cap (1/2,1)$ satisfying
\[ \lim_{i \to \infty} (\det V_i)^{(1-m)/2}\Lambda_i^{-1}=\infty,  \]
where $\Lambda_i$ is the largest eigenvalue of $V_i$.
Then we have $\lim_{i \to \infty}\alpha_{(\R^n,N_m(v_i,V_i))}(r)=0$
for all $r>0$.
Note that $(\det V_i)^{(1-m)/2} \Lambda_i^{-1} \le \Lambda_i^{(1-m)n/2-1} \le \Lambda_i^{-1/2}$.
\end{example}

Under the additional assumption that $\omega(M)<\infty$,
we further obtain the {\it $m$-normal concentration}.
We first prove a computational lemma for later use.

\begin{lemma}\label{lm:conc}
\begin{enumerate}[{\rm (i)}]
\item For any $m \in (1/2,1)$ and $a,r>0$, we have
\[ \exp_m\big( -(ar-1)^2+1 \big)
 \le (2m-1)^{1/(m-1)} \exp_m\bigg( -\frac{a^2}{2}r^2 \bigg). \]

\item For any $m \in (1,2)$ and $a,r>0$, we have
\[ \exp_m\big( (ar-1)^2-1 \big)
 \ge \bigg( \frac{2}{m}-1 \bigg)^{1/(m-1)} \exp_m\bigg( \frac{a^2}{2}r^2 \bigg). \]
\end{enumerate}
\end{lemma}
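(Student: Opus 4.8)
Both inequalities compare two values of the power function $\exp_m(t)=\{1+(m-1)t\}^{1/(m-1)}$, and the two numerical constants are themselves $1/(m-1)$-th powers of the same type: one checks directly that $(2m-1)^{1/(m-1)}=\exp_m(2)$ and $((2-m)/m)^{1/(m-1)}=\exp_m(-2/m)$. So the plan is to unfold each side of the asserted inequality as a single radical $(\,\cdot\,)^{1/(m-1)}$ and reduce the claim to a comparison of the radicands, keeping track of the sign of the exponent $1/(m-1)$. For (i) we have $m<1$, so the exponent is negative and the asserted inequality between $\exp_m$-values is equivalent to the \emph{reverse} inequality between the radicands; for (ii) we have $m>1$, so the exponent is positive and the direction is preserved. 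Along the way one notes that all the relevant radicands are strictly positive---using $m\in(1/2,1)$ in (i) and $m\in(1,2)$ in (ii)---so that the radicals make sense and, for $m>1$, we are not in the degenerate regime where $\exp_m$ vanishes.

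Carrying this out for (i): one computes $\exp_m(-(ar-1)^2+1)=\{m+(1-m)(ar-1)^2\}^{1/(m-1)}$ and $(2m-1)^{1/(m-1)}\exp_m(-a^2r^2/2)=\{(2m-1)(1+(1-m)a^2r^2/2)\}^{1/(m-1)}$, so (i) reduces to
\[ m+(1-m)(ar-1)^2 \ge (2m-1)\Big(1+(1-m)\tfrac{a^2r^2}{2}\Big). \]
Writing $x=ar$, the difference of the two sides equals $(1-m)\big[\tfrac12(x-2)^2+(1-m)x^2\big]\ge 0$, which settles (i). For (ii) one similarly gets $\exp_m((ar-1)^2-1)=\{(2-m)+(m-1)(ar-1)^2\}^{1/(m-1)}$ and $((2-m)/m)^{1/(m-1)}\exp_m(a^2r^2/2)=\{\tfrac{2-m}{m}(1+(m-1)a^2r^2/2)\}^{1/(m-1)}$, so (ii) reduces to
\[ (2-m)+(m-1)(ar-1)^2 \ge \tfrac{2-m}{m}\Big(1+(m-1)\tfrac{a^2r^2}{2}\Big). \]
Multiplying by $m>0$ and collecting terms in $x=ar$, this is equivalent to $q(ar)\ge 0$, where $q(x)=(3m-2)x^2-4mx+4$; since $m\in(1,2)$ the leading coefficient $3m-2$ is positive while the discriminant $16m^2-16(3m-2)=16(m-1)(m-2)$ is negative, so $q>0$ everywhere, finishing (ii).

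\textbf{Main point of care.} There is no deep obstacle: the content is the bookkeeping of unfolding the power functions, the direction reversal in case (i), and the two elementary quadratic estimates (a completed square in (i), a negative discriminant in (ii)). The one genuine subtlety is the domain of $\exp_m$. For $m<1$ the radicands $m+(1-m)(ar-1)^2$ and $(2m-1)(1+(1-m)a^2r^2/2)$ are visibly positive because $m>1/2$, and the arguments of $\exp_m$ lie well below $1/(1-m)$, so nothing more is needed. For $m>1$ one must also note that $(ar-1)^2-1\ge -1>-1/(m-1)$ and that the combined argument on the right stays above $-1/(m-1)$---equivalently, the radicands above are positive, which again uses $m<2$---so that $\exp_m$ is strictly positive and monotone there. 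A shortcut avoiding the direction bookkeeping altogether is to use the identity $\exp_m(s)\exp_m(t)=\exp_m\big(s+t+(m-1)st\big)$ together with $(2m-1)^{1/(m-1)}=\exp_m(2)$ and $((2-m)/m)^{1/(m-1)}=\exp_m(-2/m)$: this rewrites each right-hand side as a single $\exp_m$, and monotonicity of $\exp_m$ then reduces both statements directly to the same quadratic inequalities in $ar$.
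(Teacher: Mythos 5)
Your proof is correct and is essentially the paper's argument: both unfold $\exp_m$ as the power $\{1+(m-1)\cdot\}^{1/(m-1)}$ and reduce each inequality to an elementary quadratic estimate in $ar$ (your completed square $\tfrac12(x-2)^2+(1-m)x^2$, resp.\ the negative-discriminant quadratic, encode exactly the paper's two steps of a completed-square shift of the argument followed by factoring out the constant and using monotonicity). The only difference is organizational---you compare radicands in one shot while keeping track of the sign of $1/(m-1)$, instead of the paper's two monotonicity steps---and your domain check that the arguments stay above $-1/(m-1)$ for $m\in(1,2)$ matches the remark at the end of the paper's proof.
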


\begin{proof}
(i) We just calculate
\begin{align*}
&\exp_m\big( -(ar-1)^2+1 \big) \le \exp_m\bigg( -\frac{a^2}{2}r^2+2 \bigg) \\
&= \bigg\{ 1+(m-1)\bigg( -\frac{a^2}{2}r^2+2 \bigg) \bigg\}^{1/(m-1)} \\
&= (2m-1)^{1/(m-1)} \bigg\{ 1+(m-1)
 \bigg( -\frac{a^2}{2(2m-1)}r^2 \bigg) \bigg\}^{1/(m-1)} \\
&\le (2m-1)^{1/(m-1)} \exp_m\bigg( -\frac{a^2}{2}r^2 \bigg).
\end{align*}

(ii) We similarly find
\begin{align*}
&\exp_m\big( (ar-1)^2-1 \big)
 \ge \exp_m\bigg[ \bigg(1-\frac{m}{2} \bigg) a^2r^2 -\frac{2}{m} \bigg] \\
&= \bigg\{ \bigg( \frac{2-m}{m} \bigg)
 +(m-1)\bigg( 1-\frac{m}{2} \bigg) a^2r^2 \bigg\}^{1/(m-1)} \\
&= \bigg( \frac{2}{m}-1 \bigg)^{1/(m-1)}
 \bigg\{ 1+\frac{m}{2}(m-1)a^2r^2 \bigg\}^{1/(m-1)} \\
&\ge \bigg( \frac{2}{m}-1 \bigg)^{1/(m-1)} \exp_m\bigg( \frac{a^2}{2}r^2 \bigg).
\end{align*}
Note that the hypothesis $m \in (1,2)$ ensures that
\[ \bigg(1-\frac{m}{2} \bigg) a^2r^2 -\frac{2}{m} >-\frac{2}{m}
 >-\frac{1}{m-1}. \]
$\qedd$
\end{proof}

\begin{corollary}[$m$-normal concentration]\label{cr:conc}
Assume that $m \in [(n-1)/n,1) \cap (1/2,1)$, $\nu(M)=1$, $\omega(M)<\infty$,
$\Ric_N \ge 0$ and $\Hess\Psi \ge K>0$.
Then we have
\[ \alpha_{(M,\nu)}(r) \le \frac{(2m-1)^{1/(m-1)}}{2}
 \exp_m\bigg( -\frac{mK}{4\omega(M)^{1-m}}r^2 \bigg) \]
for all $r>0$.
\end{corollary}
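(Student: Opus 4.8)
The plan is to re-run the argument from the proof of Theorem~\ref{th:conc}(i), but to exploit the new hypothesis $\omega(M)<\infty$ in order to eliminate the factor $b^{-m}$ that there forced the awkward prefactor $\alpha^{\theta-m}$, and then to feed the resulting estimate into Lemma~\ref{lm:conc}(i). Throughout I set $W:=\omega(M)^{1-m}$ and $a:=\sqrt{mK/(2W)}>0$, so that the asserted bound is precisely $\alpha_{(M,\nu)}(r)\le\frac12(2m-1)^{1/(m-1)}\exp_m(-a^2r^2/2)$. Note first that, by the H\"older inequality, $G_m=\int_M\sigma^m\,d\omega\le\big(\int_M\sigma\,d\omega\big)^m\omega(M)^{1-m}=W$, and recall from Lemma~\ref{lm:K>0}(i),(ii) that $\nu\in\cP^2_{\ac}(M,\omega)$ and $G_m<\infty$, so that Proposition~\ref{pr:Ta} applies as in Theorem~\ref{th:conc}(i).

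If $r\le 2\sqrt{2G_m/(mK)}$ the estimate is essentially trivial: on the one hand $\alpha_{(M,\nu)}(r)\le1/2$ directly from the definition (since $B(A,r)\supseteq A$), while on the other hand $a^2r^2\le 4G_m/W\le 4\le 4/(2m-1)$, so $-a^2r^2/2\ge -2/(2m-1)$ and hence $\exp_m(-a^2r^2/2)\ge\exp_m(-2/(2m-1))=(2m-1)^{1/(1-m)}$. Multiplying by $(2m-1)^{1/(m-1)}$ and using $1/(m-1)+1/(1-m)=0$ gives $(2m-1)^{1/(m-1)}\exp_m(-a^2r^2/2)\ge1\ge 2\alpha_{(M,\nu)}(r)$.

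For $r>2\sqrt{2G_m/(mK)}$, fix a measurable set $A$ with $\nu(A)\ge1/2$, put $B:=M\setminus B(A,r)$ and $b:=\nu(B)\in(0,1/2]$ (there is nothing to prove if $b=0$). Exactly as in the derivation of $(\ref{eq:con})$ — a use of the triangle inequality for $W_1$ together with Proposition~\ref{pr:Ta} — one has
\[ \Big(\sqrt{\tfrac{mK}{2}}\,r-\sqrt{G_m}\Big)^2-G_m\le -b^{-m}\ln_m(2b)\int_B\sigma^m\,d\omega. \]
Here $-b^{-m}\ln_m(2b)\ge0$ and, by H\"older, $\int_B\sigma^m\,d\omega\le\big(\int_B\sigma\,d\omega\big)^m\omega(B)^{1-m}\le b^mW$, so the right-hand side is at most $-\ln_m(2b)\,W$. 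Dividing by $W$, expanding the square, and using $G_m\le W$ to bound the cross term, I get
\[ -\ln_m(2b)\ \ge\ \frac{1}{W}\Big(\frac{mK}{2}r^2-\sqrt{2mKG_m}\,r\Big)\ \ge\ a^2r^2-2ar\ =\ (ar-1)^2-1. \]
Thus $\ln_m(2b)\le -(ar-1)^2+1$, and applying the monotone $\exp_m$ (the argument is $\le 1<1/(1-m)$, hence in its domain) yields $2b\le\exp_m(-(ar-1)^2+1)$. Lemma~\ref{lm:conc}(i) then gives $2b\le(2m-1)^{1/(m-1)}\exp_m(-a^2r^2/2)$, and taking the supremum over all admissible $A$ and dividing by $2$ finishes the proof.

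The one step that genuinely requires $\omega(M)<\infty$ is the cancellation $-b^{-m}\ln_m(2b)\cdot b^mW=-\ln_m(2b)\,W$: without finiteness of $\omega$ one cannot dominate $\int_B\sigma^m\,d\omega$ by a constant multiple of $b^m$, which is exactly why Theorem~\ref{th:conc}(i) had to retain the $b^{-m}$ prefactor. Everything else is routine bookkeeping with $m$-exponentials, and the passage from the Gaussian-type bound $\exp_m(-(ar-1)^2+1)$ to the $m$-Gaussian form $(2m-1)^{1/(m-1)}\exp_m(-a^2r^2/2)$ has already been isolated in Lemma~\ref{lm:conc}.
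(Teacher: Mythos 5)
Your proof is correct and takes essentially the same route as the paper's: it reduces to the inequality $(\ref{eq:con})$ from the proof of Theorem~\ref{th:conc}(i), uses H\"older to bound $\int_B\sigma^m\,d\omega\le b^m\omega(M)^{1-m}$ and $G_m\le\omega(M)^{1-m}$, and then passes from $\exp_m(-(ar-1)^2+1)$ to the $m$-Gaussian form via Lemma~\ref{lm:conc}(i). The only cosmetic differences are that you expand the square and bound the cross term where the paper uses monotonicity in $G_m$, and you dispose of the small-$r$ range with the threshold $r\le 2\sqrt{2G_m/(mK)}$ instead of $r^2\le 8\omega(M)^{1-m}/(mK)$; both are valid.
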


\begin{proof}
Let us use the same notation as the proof of Theorem~\ref{th:conc}.
We deduce from the H\"older inequality that
\[ \int_B \sigma^m \,d\omega \le \bigg( \int_B \sigma \,d\omega \bigg)^m
 \omega(B)^{1-m} =b^m \omega(B)^{1-m} \le b^m \omega(M)^{1-m} \]
and, similarly, $G_m \le \omega(M)^{1-m}$.
In particular, $r^2>8\omega(M)^{1-m}/mK$
(otherwise the assertion is clear since $(2m-1)^{1/(m-1)} \exp_m(-2)>1$)
implies $r^2>8G_m/mK$.
Therefore we deduce from $(\ref{eq:con})$ that
\begin{align*}
&\bigg( \sqrt{\frac{mK}{2}}r-\omega(M)^{(1-m)/2} \bigg)^2 -\omega(M)^{1-m}
\le \bigg( \sqrt{\frac{mK}{2}}r-\sqrt{G_m} \bigg)^2 -G_m \\
&\le -b^{-m} \ln_m(2b) \int_B \sigma^m \,d\omega
 \le -\omega(M)^{1-m} \ln_m(2b),
\end{align*}
and hence
\[ \alpha_{(M,\nu)}(r) \le \frac{1}{2} \exp_m\bigg[ -\bigg( 
 \omega(M)^{(m-1)/2} \sqrt{\frac{mK}{2}}r-1 \bigg)^2 +1 \bigg]. \]
Then Lemma~\ref{lm:conc}(i) completes the proof.
$\qedd$
\end{proof}

\begin{remark}\label{rm:expm}
Note that, for $m<1$, $\exp_m(-cr^2)$ is greater than $e^{-cr^2}$ and is
a polynomial of $r$, so that the $m$-normal concentration is weaker than
the exponential concentration $\alpha(r) \le Ce^{-cr}$.
This is natural and the most we can expect, because the $m$-Gaussian
measures have only the polynomial decay.
\end{remark}

For $m>1$, Lemma~\ref{lm:K>0}(iii) ensures that $\supp\nu$ is bounded.
Thus $\|\sigma\|_{\infty}<\infty$ and $G_c(\nu)<\infty$ for all $c>0$.
Then the proof of Theorem~\ref{th:conc}(i) is applicable to $m \in (1,2]$ and
gives the same estimate $(\ref{eq:conc})$ for all $r>0$ and $\theta \in [0,1)$.
Furthermore, for $m<2$, we again obtain the $m$-normal concentration
(depending on $\|\sigma\|_{\infty}$).

\begin{proposition}[$m>1$ case]\label{pr:conc}
Let $(M,\omega)$ satisfy $\Ric_N \ge 0$ and $m \in (1,2]$.
\begin{enumerate}[{\rm (i)}]
\item Assume that $\nu(M)=1$ and $\Hess\Psi \ge K>0$.
Then we have $(\ref{eq:conc})$ for all $r>0$ and $\theta \in [0,1)$.

\item If in addition $m<2$, then we have
\[ \alpha_{(M,\nu)}(r)^{-1} \ge \bigg( \frac{2}{m}-1 \bigg)^{1/(m-1)}
 \exp_m\bigg( \frac{mK\|\sigma\|_{\infty}^{1-m}}{4}r^2 \bigg) \]
for all $r>0$.
\end{enumerate}
\end{proposition}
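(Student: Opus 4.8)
The plan is to mirror the proofs of Theorem~\ref{th:conc}(i) and Corollary~\ref{cr:conc}, both carried out for $m<1$, while tracking the sign reversals caused by $m>1$. The structural input is Lemma~\ref{lm:K>0}(iii): since $\Hess\Psi\ge K>0$, the support $\overline{M_0}$ of $\nu$ is bounded and convex, so $\|\sigma\|_{\infty}<\infty$, $\omega(\overline{M_0})<\infty$ and $G_c(\nu)=\int_M\sigma^c\,d\omega<\infty$ for all $c>0$; hence $\sigma\in L^m(M,\omega)$, $\nu\in\cP^2_{\ac}(M,\omega)$, the entropy $H_m(\cdot|\nu)$ is well defined, and the Talagrand inequality (Proposition~\ref{pr:Ta}, valid for $m\in[(n-1)/n,\infty)\setminus\{1/2,1\}$, hence for $m\in(1,2]$) is available for the auxiliary measures below.

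For part~(i) I would rerun the proof of Theorem~\ref{th:conc}(i). The range $r\le2\sqrt{2G_m/mK}$ is trivial, since then the right-hand side of $(\ref{eq:conc})$ is nonnegative while $\alpha_{(M,\nu)}(r)\le\frac12$ and $\ln_m$ is nonpositive on $[0,1]$ (also for $m>1$). For $r>2\sqrt{2G_m/mK}$ one takes $A$ with $\nu(A)\ge\frac12$, puts $B:=M\setminus B(A,r)$, $b:=\nu(B)>0$, $\mu_A:=\nu(A)^{-1}\chi_A\nu$, $\mu_B:=b^{-1}\chi_B\nu$, and combines $W_1(\mu_A,\mu_B)\ge r$ with Proposition~\ref{pr:Ta}; the estimate $H_m(\mu_A|\nu)\le m^{-1}G_m$ persists because the two sign changes ($ma^{m-1}-1\ge0$ for $a\ge\frac12$, and $m(1-m)<0$) cancel. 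The only genuinely new point is that the inequality $\frac{mb^{m-1}-1}{1-m}\le-\ln_m(2b)$ needed to reach $(\ref{eq:con})$ reduces to $(2b)^{m-1}\le mb^{m-1}$, i.e.\ $2^{m-1}\le m$, which holds precisely on $(1,2]$ (the mirror of $m<2^{m-1}$ on $[1/2,1)$). Then the H\"older step $\int_B\sigma^m\,d\omega\le b^{\theta}G_{(m-\theta)/(1-\theta)}^{1-\theta}$ works for every $\theta\in[0,1)$ since all $G_c<\infty$, and passing to a maximizing sequence of sets $A$ gives $(\ref{eq:conc})$ for all $r>0$ and $\theta\in[0,1)$.

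For part~(ii), with $m\in(1,2)$, I would start from $(\ref{eq:con})$ and insert the coarse bounds $\int_B\sigma^m\,d\omega\le\|\sigma\|_{\infty}^{m-1}b$ and $G_m\le\|\sigma\|_{\infty}^{m-1}$ (from $\sigma^m\le\|\sigma\|_{\infty}^{m-1}\sigma$ and $\nu(M)=1$), together with the monotonicity of $s\mapsto(\sqrt{mK/2}\,r-\sqrt s)^2-s$. Using $b^{-m}\ln_m(2b)=(m-1)^{-1}(2^{m-1}b^{-1}-b^{-m})$, a short computation turns $(\ref{eq:con})$ into $b^{1-m}\ge(m-1)\big((ar-1)^2-1\big)+2^{m-1}$ with $a:=\|\sigma\|_{\infty}^{(1-m)/2}\sqrt{mK/2}$, so that $a^2/2=mK\|\sigma\|_{\infty}^{1-m}/4$. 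Since $2^{m-1}\ge1$, this yields $b^{1-m}\ge1+(m-1)((ar-1)^2-1)=\exp_m\big((ar-1)^2-1\big)^{m-1}$, hence $b^{-1}\ge\exp_m((ar-1)^2-1)$, and Lemma~\ref{lm:conc}(ii) then gives $b^{-1}\ge(2/m-1)^{1/(m-1)}\exp_m(a^2r^2/2)$; a maximizing sequence finishes. The range $r^2\le8\|\sigma\|_{\infty}^{m-1}/mK$, where $(\ref{eq:con})$ was not derived, is disposed of directly: there $a^2r^2/2\le2$, so $(2/m-1)^{1/(m-1)}\exp_m(a^2r^2/2)\le\big((2/m-1)(2m-1)\big)^{1/(m-1)}=(5-2m-2/m)^{1/(m-1)}\le1\le2\le\alpha_{(M,\nu)}(r)^{-1}$.

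I expect the main obstacle to be bookkeeping rather than a new idea: one must check that every chain of inequalities in the $m<1$ proofs reverses consistently, and that the scalar facts invoked transfer correctly---chiefly $2^{m-1}\le m$ on $(1,2]$ for part~(i), and $2^{m-1}\ge1$ together with $5-2m-2/m\le1$ on $[1,2]$ for part~(ii). The one place where boundedness of $\overline{M_0}$ is essential is the finiteness of $H_m(\mu_A|\nu)$ and $H_m(\mu_B|\nu)$: for $m>1$ this requires the densities of $\mu_A,\mu_B$ with respect to $\omega$ to lie in $L^m(M,\omega)$, which holds because $\sigma$ is bounded and $\omega(\overline{M_0})<\infty$.
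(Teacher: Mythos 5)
Your proposal is correct and follows essentially the same route as the paper: part (i) is exactly the paper's ``rerun Theorem~\ref{th:conc}(i)'' argument, with your scalar check $2^{m-1}\le m$ on $(1,2]$ being equivalent to the paper's observation that $1/2\ge m^{1/(1-m)}$ still holds there, and part (ii) reproduces the paper's proof (bounds $\int_B\sigma^m\,d\omega\le b\|\sigma\|_\infty^{m-1}$, $G_m\le\|\sigma\|_\infty^{m-1}$, the fact $2^{m-1}\ge 1$, Lemma~\ref{lm:conc}(ii), and the small-$r$ case via $5-2m-2/m\le 1$), differing only in presenting the key step as $b^{1-m}\ge 1+(m-1)((ar-1)^2-1)$ rather than via $-b^{1-m}\ln_m(2b)\le\ln_m(b^{-1})$.
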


\begin{proof}
(i) This is completely the same as Theorem~\ref{th:conc}(i),
since $1/2 \ge m^{1/(1-m)}$ holds also for $m \in (1,2]$.

(ii) In $(\ref{eq:con})$ (with $m>1$), we observe
$\int_B \sigma^m \,d\omega \le b\|\sigma\|_{\infty}^{m-1}$ and
$G_m \le \|\sigma\|_{\infty}^{m-1}$.
Note also that $r^2>8\|\sigma\|_{\infty}^{m-1}/mK$
(otherwise $((2-m)/m)^{1/(m-1)}\exp_m(2)<1$ immediately gives the assertion)
ensures $r^2>8G_m/mK$.
These yield
\[ \bigg( \sqrt{\frac{mK}{2}}r -\|\sigma\|_{\infty}^{(m-1)/2} \bigg)^2
 -\|\sigma\|_{\infty}^{m-1}
 \le -b^{1-m}\|\sigma\|_{\infty}^{m-1} \ln_m(2b)
 \le \|\sigma\|_{\infty}^{m-1} \ln_m(b^{-1}). \]
Hence we have
\[ \alpha_{(M,\nu)}(r)^{-1} \ge \exp_m\bigg[ \bigg(
 \|\sigma\|_{\infty}^{(1-m)/2} \sqrt{\frac{mK}{2}}r-1 \bigg)^2 -1 \bigg], \]
and Lemma~\ref{lm:conc}(ii) completes the proof.
$\qedd$
\end{proof}

Note that we obtained the estimate of the form $\alpha(r) \le C\exp_m(-cr^2)$
for $m<1$, while $\alpha(r) \le C\{\exp_m(cr^2)\}^{-1}$ for $m>1$.
This is in a sense natural because the domain of $\exp_m$ is $(-\infty,1/(1-m))$
for $m<1$ and $[-1/(m-1),\infty)$ for $m>1$.

\begin{remark}\label{rm:cm>1}
We deduce from Proposition~\ref{pr:conc}(ii) that,
if $\lim_{i \to \infty} K_i\|\sigma_i\|_{\infty}^{1-m}=\infty$ for some sequence
$\{(M_i,\nu_i)\}_{i \in \N}$ satisfying $\Hess\Psi_i \ge K_i$,
then we have $\lim_{i \to \infty}\alpha_{(M_i,\nu_i)}(r)=0$ for all $r>0$
(e.g., a sequence of $m$-Gaussian measures $\{N_m(v_i,V_i)\}_{i \in \N}$
such that $\lim_{i \to \infty} \Lambda_i =0$, compare this with Example~\ref{ex:conc}).
This is, however, an immediate consequence of a stronger conclusion
$\lim_{i \to \infty}\diam(\supp\nu_i)=0$ of Lemma~\ref{lm:K>0}(iii) (valid for all $m>1$).
Indeed,
\[ \diam(\supp\nu_i)^2 \le \frac{8}{K_i}\bigg( \frac{1}{m-1}-\inf_{M_i}\Psi_i \bigg)
 =\frac{8}{K_i} \frac{\|\sigma_i\|_{\infty}^{m-1}}{m-1}. \]
\end{remark}

\section{Gradient flow of $H_m$}\label{sc:gf}

In this section, we show that the gradient flow of the $m$-relative entropy
produces a weak solution to the porous medium equation ($m>1$)
or the fast diffusion equation ($m<1$).
This kind of interpretation of evolution equations has turned out extremely
useful after the pioneering work due to Jordan et al.~\cite{JKO}.
There are several ways of explaining this coincidence
(see, e.g., \cite{JKO}, \cite{AGS} and \cite[Chapter~23]{Vi2}),
among them, here we follow the rather `metric geometric' approach in \cite{Ogra}.
To do this, we start with a review of the geometric structure of
the Wasserstein space and the general theory of gradient flows in it
in accordance with the strategy in \cite{Ogra} (see also \cite{GO}).
Throughout the section, $(M,g)$ is assumed to be compact,
so that $\cP^2(M)=\cP(M)$ and $\sigma \in L^m(M,\omega)$.

\subsection{Geometric structure of  $(\cP(M),W_2)$}\label{ssc:gf1}

We briefly review the geometric structure of $(\cP(M),W_2)$.
It is known that $(\cP(M),W_2)$ is an Alexandrov space of nonnegative
curvature if and only if $(M,g)$ has the nonnegative sectional curvature
(\cite[Proposition~2.10]{StI}, \cite[Theorem~A.8]{LV2}).
In the case where $(M,g)$ is not nonnegatively curved, although $(\cP(M),W_2)$
does not admit any lower curvature bound (\cite[Proposition~2.10]{StI}),
we can show the following (see also \cite[Theorem~3.6]{Ogra}).

\begin{theorem}\label{th:angle}{\rm (\cite[Theorem~3.4, Remark~3.5]{Gi})}
Given $\mu \in \cP(M)$ and unit speed geodesics
$\alpha,\beta:[0,\delta) \lra \cP(M)$ with $\alpha(0)=\beta(0)=\mu$, the joint limit
\[ \lim_{s,t \to 0}\frac{s^2 +t^2-W_2(\alpha(s),\beta(t))^2}{2st}\ \in [-1,1] \]
exists.
\end{theorem}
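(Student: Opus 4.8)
The plan is to identify the limit with an $L^2$‑inner product of the two geodesics' initial velocities and to prove existence by combining the smooth structure of $M$ (which forces $W_2^2$ to be \emph{uniformly} semiconcave along geodesics) with sharp two‑sided asymptotics for $W_2(\alpha(s),\beta(t))^2$. First I would lift $\alpha$ and $\beta$, via Proposition~\ref{pr:LV} applied to suitable reparametrisations, to measures $\Pi_\alpha,\Pi_\beta\in\cP(\Gamma(M))$ on minimal geodesics of $M$ whose time‑zero marginal is $\mu$ and which represent $\alpha$ and $\beta$ through the evaluation maps. Disintegrating over $\mu$ associates to $\mu$‑a.e.\ $x$ an initial velocity $v_\alpha(x)=\dot\gamma(0)\in T_xM$ (a velocity plan in general, should several geodesics of $\supp\Pi_\alpha$ issue from a common $x$), and likewise $v_\beta$, with $\int_M|v_\alpha|^2\,d\mu=\int_M|v_\beta|^2\,d\mu=1$ since $\alpha,\beta$ have unit speed. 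The candidate value is $L:=\int_M\langle v_\alpha,v_\beta\rangle\,d\mu\in[-1,1]$ by Cauchy--Schwarz (interpreted through a joint velocity plan when the $v$'s are not maps). Note also that $W_2(\alpha(s),\beta(t))\in[\,|s-t|,s+t\,]$ by the triangle inequality, so $\frac{s^2+t^2-W_2(\alpha(s),\beta(t))^2}{2st}\in[-1,1]$ for \emph{every} $s,t$; hence the only genuine issue is existence of the limit.

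Next I would record that for every $q\in M$ and every geodesic $c$ of the smooth compact manifold $M$, $r\mapsto d_M(q,c(r))^2$ is semiconcave with a constant depending only on $M$ and on $|\dot c|$; pushing this through $\Pi_\beta$ shows that for each fixed small $s$ the function $t\mapsto f_s(t):=W_2(\alpha(s),\beta(t))^2$ is semiconcave near $0$ with a constant uniform in $s$, and symmetrically in $s$. Since $f_s(0)=W_2(\alpha(s),\mu)^2=s^2$, semiconcavity makes $t\mapsto(f_s(0)-f_s(t))/t$ monotone up to a linear correction, so $\lim_{t\to0^+}\frac{s^2+t^2-f_s(t)}{2st}$ exists. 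The first variation formula for $W_2^2$ (the right derivative of $\tfrac12 W_2(\alpha(s),\cdot)^2$ along $\beta$ at $\mu$), together with the fact that for $s$ small the optimal coupling of $\mu$ and $\alpha(s)$ is exactly $x\mapsto\exp_x(sv_\alpha(x))$, identifies this inner limit as $L$, independently of $s$. At this stage the iterated limit equals $L$, and it remains to promote it to the joint limit.

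This last step is the main obstacle. Because $(\cP(M),W_2)$ admits no lower curvature bound when $M$ is not nonnegatively curved, the monotonicity of comparison angles familiar from Alexandrov geometry is unavailable, and the semiconcavity estimates above control $\frac{s^2+t^2-W_2^2}{2st}$ only when one of $s,t$ strongly dominates the other. What is needed is the uniform asymptotic $W_2(\alpha(s),\beta(t))^2=s^2+t^2-2stL+o(st)$ as $(s,t)\to(0,0)$ along all approaches. The upper estimate $W_2^2\le s^2+t^2-2stL+o(st)$ I would obtain by gluing $\Pi_\alpha$ and $\Pi_\beta$ over their common marginal to produce the coupling $\big(\exp_x(sv_\alpha(x)),\exp_x(tv_\beta(x))\big)$ of $\alpha(s)$ and $\beta(t)$ and using the uniform Taylor expansion $d_M(\exp_x(su),\exp_x(tw))^2=|su-tw|^2+O\big((s|u|+t|w|)^3\big)$, after truncating $\{|v_\alpha|+|v_\beta|>R\}$ and letting $R\to\infty$, checking that the cubic remainder is genuinely $o(st)$ in every regime. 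The matching lower estimate $W_2^2\ge s^2+t^2-2stL-o(st)$ is the hard half: the triangle inequality only yields the too‑weak $W_2^2\ge(s-t)^2$, and the glued coupling bounds $W_2^2$ from above, not below, so one must instead certify that this coupling is asymptotically optimal — for instance by assembling approximate Kantorovich potentials from the $(d^2/2)$‑convex functions generating $v_\alpha,v_\beta$ (Theorem~\ref{th:FG} when $\mu$ is absolutely continuous, a velocity‑plan version otherwise) and invoking Kantorovich duality, or by a stability argument for cyclically monotone supports. It is precisely here that the smoothness and compactness of $M$ enter decisively, through the uniform semiconcavity of $W_2^2$ along geodesics. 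Combining the two estimates gives $\lim_{s,t\to0}\frac{s^2+t^2-W_2(\alpha(s),\beta(t))^2}{2st}=L\in[-1,1]$, as claimed.
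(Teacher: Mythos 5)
This theorem is quoted in the paper from Gigli's work \cite[Theorem~3.4, Remark~3.5]{Gi}; the paper contains no proof of it, so your argument has to stand on its own --- and at present it does not. The decisive step is exactly the one you set aside: the lower bound $W_2(\alpha(s),\beta(t))^2 \ge s^2+t^2-2stL-o(st)$, i.e.\ the asymptotic optimality of the glued coupling. You correctly note that the triangle inequality and the semiconcavity of $d^2$ only control the quotient in regimes where one of $s,t$ dominates the other (iterated limits), and you then list two possible strategies for the joint limit (``approximate Kantorovich potentials \dots\ or a stability argument for cyclically monotone supports'') without carrying either out. That missing half is precisely the nontrivial content of the theorem --- it is why the statement is delicate in the absence of a lower curvature bound on $(\cP(M),W_2)$ --- so what you have is a plausible plan plus the easy inequality, not a proof.

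Moreover, your candidate value $L$ is wrong for general $\mu$. The theorem is stated for every $\mu\in\cP(M)$, and geodesics emanating from such $\mu$ may split mass, so the initial data are velocity \emph{plans}; the limit is then not the inner product computed from the disintegration/product gluing over $\mu$, but the supremum of $\int\langle u,w\rangle$ over all glueings of the two velocity plans (every glued coupling yields an upper bound for $W_2^2$, hence a lower bound for the quotient, and the limit saturates the best such bound). Concretely, take $\mu=\delta_x$, let $\alpha(s)$ split the mass into two halves with initial velocities $\pm v$ and $\beta(t)$ into halves with $\pm w$, where $|v|=|w|=1$ and $\langle v,w\rangle>0$; then $W_2(\alpha(s),\beta(t))^2=s^2+t^2-2st\langle v,w\rangle$, so the joint limit equals $\langle v,w\rangle$, while your product gluing gives $0$. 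For absolutely continuous $\mu$ the velocities are maps, the gluing is unique and your $L$ is the correct value, but in general both the target of the argument and the hard optimality step need to be fixed.
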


Theorem~\ref{th:angle} means that an angle between $\alpha$ and $\beta$ makes sense,
so that $(\cP(M),W_2)$ looks like a Riemannian space
(rather than a Finsler space), and we can investigate its infinitesimal structure
in the manner of the theory of Alexandrov spaces.
For $\mu \in \cP(M)$, denote by $\Sigma'_{\mu}[\cP(M)]$ the set of all
(nontrivial) unit speed minimal geodesics emanating from $\mu$.
Given $\alpha,\beta \in \Sigma'_{\mu}[\cP(M)]$, Theorem~\ref{th:angle}
verifies that the {\it angle}
\[ \angle_{\mu}(\alpha,\beta)
 :=\arccos \bigg( \lim_{s,t \to 0}\frac{s^2 +t^2-W_2(\alpha(s),\beta(t))^2}{2st} \bigg)
 \in [0,\pi] \]
is well-defined.
We define the {\it space of directions} $(\Sigma_{\mu}[\cP(M)],\angle_{\mu})$
as the completion of $(\Sigma'_{\mu}[\cP(M)]/\!\!\sim, \angle_{\mu})$,
where $\alpha \sim \beta$ holds if $\angle_{\mu}(\alpha,\beta)=0$.
The {\it tangent cone} $(C_{\mu}[\cP(M)],\sigma_{\mu})$ is defined as
the Euclidean cone over $(\Sigma_{\mu}[\cP(M)],\angle_{\mu})$, i.e.,
\begin{align*}
C_{\mu}[\cP(M)] &:=\big( \Sigma_{\mu}[\cP(M)] \times [0,\infty) \big)
 \big/ \big( \Sigma_{\mu}[\cP(M)] \times \{0\} \big), \\
\sigma_{\mu}\big( (\alpha,s),(\beta,t) \big)
&:=\sqrt{s^2+t^2-2st\cos\angle_{\mu}(\alpha,\beta)}.
\end{align*}
Using this infinitesimal structure, we introduce a class of `differentiable curves'.

\begin{definition}[Right differentiability]\label{df:rd}
We say that a curve $\xi:[0,l) \lra \cP(M)$ is {\it right differentiable} at $t \in [0,l)$
if there is $\bv \in C_{\xi(t)}[\cP(M)]$ such that, for any sequences
$\{\ve_i\}_{i \in \N}$ of positive numbers tending to zero and $\{ \alpha_i \}_{i \in \N}$
of unit speed minimal geodesics from $\xi(t)$ to $\xi(t+\ve_i)$, the sequence
$\{ (\alpha_i,W_2(\xi(t),\xi(t+\ve_i))/\ve_i) \}_{i \in \N} \subset C_{\xi(t)}[\cP(M)]$
converges to $\bv$.
Such $\bv$ is clearly unique if it exists, and then we write $\dot{\xi}(t)=\bv$.
\end{definition}

\subsection{Gradient flows in $(\cP(M),W_2)$}\label{ssc:gf2}

Consider a lower semi-continuous function $f:\cP(M) \lra (-\infty,+\infty]$
which is $K$-convex in the weak sense for some $K \in \R$.
We in addition suppose that $f$ is not identically $+\infty$, and define
$\cP^*(M):=\{ \mu \in \cP(M) \,|\, f(\mu)<\infty \}$.

Given $\mu \in \cP^*(M)$ and $\alpha \in \Sigma_{\mu}[\cP(M)]$, we set
\[ D_{\mu}f(\alpha):=\liminf_{\Sigma'_{\mu}[\cP(M)] \ni \beta \to \alpha}
 \lim_{t \to 0}\frac{f(\beta(t))-f(\mu)}{t}. \]
Define the {\it absolute gradient} (called the {\it local slope} in \cite{AGS})
of $f$ at $\mu \in \cP^*(M)$ by
\[ |\grad f|(\mu):=\max\bigg\{ 0, \limsup_{\tilde{\mu} \to \mu}
 \frac{f(\mu)-f(\tilde{\mu})}{W_2(\mu,\tilde{\mu})} \bigg\}. \]
Note that $-D_{\mu}f(\alpha) \le |\grad f|(\mu)$ for any $\alpha \in \Sigma_{\mu}[\cP(M)]$.

\begin{lemma}\label{lm:gv}{\rm (\cite[Lemma~4.2]{Ogra})}
For each $\mu \in \cP^*(M)$ with $0<|\grad f|(\mu)<\infty$,
there exists unique $\alpha \in \Sigma_{\mu}[\cP^*(M)]$ satisfying
$D_{\mu}f(\alpha)=-|\grad f|(\mu)$.
\end{lemma}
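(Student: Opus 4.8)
The plan is to show that along a (suitable) geodesic $\beta$, the difference quotient $[f(\beta(t))-f(\mu)]/t$ is monotone in $t$, so $D_\mu f(\alpha)$ is actually a limit, and then to exploit the $K$-convexity of $f$ together with the infinitesimal Euclidean (Hilbert-like) structure of $C_\mu[\cP(M)]$ guaranteed by Theorem~\ref{th:angle}. More precisely, I would first observe that, for a unit-speed minimal geodesic $\beta \in \Sigma'_\mu[\cP(M)]$, the $K$-convexity inequality applied on $[0,t]$ gives that $t \mapsto [f(\beta(t))-f(\mu)]/t + (K/2)t$ is nondecreasing; hence the limit $\lim_{t\to 0}[f(\beta(t))-f(\mu)]/t$ exists in $[-\infty,\infty)$, and it equals $\inf_{t>0}\{[f(\beta(t))-f(\mu)]/t+(K/2)t\} - 0$, in particular it is $\le [f(\beta(t))-f(\mu)]/t + (K/2)t$ for every $t$. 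This makes $D_\mu f(\alpha)$ a genuine (liminf of) directional derivative(s), and the bound $-D_\mu f(\alpha) \le |\grad f|(\mu) < \infty$ shows the relevant quantities are finite.

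\textbf{Existence.} I would take a sequence $\beta_i \in \Sigma'_\mu[\cP(M)]$ with $\lim_i \lim_{t\to 0}[f(\beta_i(t))-f(\mu)]/t = -|\grad f|(\mu)$, which exists by the definition of $|\grad f|(\mu)$ as a $\limsup$ of difference quotients (choosing $\tilde\mu_i \to \mu$ nearly attaining the slope and letting $\beta_i$ be a unit-speed minimal geodesic from $\mu$ toward $\tilde\mu_i$; the monotonicity above lets one pass from the secant slope at $\tilde\mu_i$ to the derivative at $0$ at the cost of an error $O(K\cdot W_2(\mu,\tilde\mu_i))\to 0$). The $\beta_i$, viewed as unit vectors in the tangent cone, lie in $\Sigma_\mu[\cP(M)]$; I would like to extract a convergent subsequence. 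Since $\Sigma_\mu[\cP(M)]$ need not be compact, I instead argue with a parallelogram/uniform-convexity estimate: using Theorem~\ref{th:angle} the angle $\angle_\mu(\beta_i,\beta_j)$ is well-defined, and the midpoint direction — realized by a geodesic $\eta_{ij}$ from $\mu$ through the geodesic midpoint of $\beta_i(s),\beta_j(s)$ for small $s$ — satisfies, by $K$-convexity of $f$ along the $W_2$-geodesic between $\beta_i(s)$ and $\beta_j(s)$ plus the definition of the angle,
\[
D_\mu f(\eta_{ij}) \cos\tfrac{\angle_\mu(\beta_i,\beta_j)}{2}
 \le \tfrac12\bigl(D_\mu f(\beta_i)+D_\mu f(\beta_j)\bigr) + o(1),
\]
roughly the statement that $\alpha \mapsto D_\mu f(\alpha)$ is concave on directions when restricted to the sphere, exactly as in a Hilbert space. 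Combined with $-D_\mu f(\eta_{ij}) \le |\grad f|(\mu)$ this forces $\cos(\angle_\mu(\beta_i,\beta_j)/2)\to 1$, i.e.\ $\{\beta_i\}$ is Cauchy in $(\Sigma_\mu[\cP(M)],\angle_\mu)$. By completeness of the space of directions it converges to some $\alpha\in\Sigma_\mu[\cP(M)]$; lower semicontinuity of $D_\mu f(\cdot)$ (which is built into its definition as a $\liminf$ over $\beta\to\alpha$) then gives $D_\mu f(\alpha)\le -|\grad f|(\mu)$, while the reverse inequality $-D_\mu f(\alpha)\le|\grad f|(\mu)$ is automatic. Hence $D_\mu f(\alpha)=-|\grad f|(\mu)$. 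One also checks $\alpha\in\Sigma_\mu[\cP^*(M)]$ since along $\alpha$ the values $f(\alpha(t))$ stay finite for small $t$ (otherwise the difference quotient would be $+\infty$, contradicting $D_\mu f(\alpha)<0$).

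\textbf{Uniqueness.} If $\alpha'$ were another minimizer, apply the same midpoint estimate to the pair $(\alpha,\alpha')$: the midpoint direction $\eta$ would satisfy $-D_\mu f(\eta)\ge |\grad f|(\mu)/\cos(\angle_\mu(\alpha,\alpha')/2) > |\grad f|(\mu)$ unless $\angle_\mu(\alpha,\alpha')=0$, contradicting $-D_\mu f\le|\grad f|(\mu)$; hence $\angle_\mu(\alpha,\alpha')=0$, i.e.\ $\alpha=\alpha'$ in $\Sigma_\mu[\cP(M)]$.

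\textbf{Main obstacle.} The delicate point is the midpoint/parallelogram inequality for $D_\mu f$: it requires simultaneously (a) the existence of the angle (Theorem~\ref{th:angle}) to make ``midpoint direction'' meaningful, (b) control, uniform in the small parameter $s$, of the $W_2$-geodesic joining $\beta_i(s)$ and $\beta_j(s)$ so that $K$-convexity of $f$ along it can be invoked with the right second-order term, and (c) commuting the $s\to 0$ limit with the $\liminf$ defining $D_\mu f$. Since this is precisely the infinitesimal-Hilbert-space argument for gradients in Alexandrov-like spaces, I expect it to go through as in \cite{Ogra}, but verifying the uniformity in (b) is the step I would spend the most care on.
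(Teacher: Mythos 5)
The paper itself offers no proof of this lemma: it is quoted verbatim from \cite[Lemma~4.2]{Ogra}, so the only meaningful comparison is with the argument there, whose overall strategy (monotone difference quotients from $K$-convexity, nearly optimal directions from the definition of $|\grad f|$, a midpoint estimate forcing such directions to be Cauchy in angle, completeness of $\Sigma_{\mu}$ plus the $\liminf$-definition of $D_{\mu}f$) your outline does mirror. The genuine gap is precisely the step you yourself flag as the main obstacle, and it is not a technical formality: your midpoint inequality $D_{\mu}f(\eta_{ij})\cos(\angle_{\mu}(\beta_i,\beta_j)/2)\le \tfrac12(D_{\mu}f(\beta_i)+D_{\mu}f(\beta_j))+o(1)$ requires the asymptotic \emph{upper} bound $W_2(\mu,m_s)\le s\cos(\angle_{\mu}(\beta_i,\beta_j)/2)+o(s)$ for the midpoint $m_s$ of a geodesic joining $\beta_i(s)$ and $\beta_j(s)$ (equivalently, a semiconcavity of $W_2(\mu,\cdot)^2$ along that geodesic, i.e.\ Euclidean-cone behaviour of the tangent cone, and even the existence of the limit direction $\eta_{ij}$). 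Theorem~\ref{th:angle} only controls $W_2(\beta_i(s),\beta_j(t))$ and says nothing about distances from $\mu$ to points on the connecting geodesic; what $K$-convexity of $f$ together with the slope bound $f(m_s)\ge f(\mu)-(|\grad f|(\mu)+\ve)W_2(\mu,m_s)$ gives you is only the \emph{lower} bound $W_2(\mu,m_s)\ge(1-o(1))s$, which is useless without the matching upper bound. That upper bound is exactly the hard geometric content of \cite{Ogra} (and \cite{Gi}), established there by optimal-transport arguments specific to Wasserstein spaces over compact Alexandrov/Riemannian spaces, not by abstract metric geometry; writing ``I expect it to go through as in \cite{Ogra}'' therefore leaves the proof circular at its central point, and the same gap afflicts your uniqueness argument.

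Two smaller points. First, the monotone quantity is $t\mapsto[f(\beta(t))-f(\mu)]/t-\tfrac{K}{2}t$, not $+\tfrac{K}{2}t$ (test on $f(\beta(t))=\tfrac{K}{2}t^2$ with $K<0$); this does not affect the existence of the limit, since the correction term vanishes as $t\to0$, but the statement as you wrote it is false. Second, since $f$ is only $K$-convex in the weak sense, you must take each $\beta_i$ to be a geodesic from $\mu$ to $\tilde\mu_i$ \emph{along which} the $K$-convexity inequality holds: both your monotonicity step and the assertion $\beta_i\in\Sigma'_{\mu}[\cP^*(M)]$ (hence $\alpha\in\Sigma_{\mu}[\cP^*(M)]$, since along such a geodesic $f(\beta_i(t))\le(1-t)f(\mu)+tf(\tilde\mu_i)-\tfrac{K}{2}(1-t)tW_2(\mu,\tilde\mu_i)^2<\infty$) need this choice, and an arbitrary minimal geodesic will not do. Incidentally, in the existence part you can avoid introducing $\eta_{ij}$ altogether by using the slope bound at $m_s$ directly, but this simplification does not remove the missing midpoint-distance estimate.
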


Using $\alpha$ in the above lemma, we define the {\it negative gradient vector}
of $f$ at $\mu$ as
\[ \grad f(\mu):=\big( \alpha,|\grad f|(\mu) \big) \in C_{\mu}[\cP(M)]. \]
In the case of $|\grad f|(\mu)=0$, we simply define $\grad f(\mu)$ as the origin
of $C_{\mu}[\cP(M)]$.

\begin{definition}[Gradient curves]\label{df:gc}
A continuous curve $\xi:[0,l) \lra \cP^*(M)$ which is locally Lipschitz on $(0,l)$
is called a {\it gradient curve} of $f$ if $|\grad f|(\xi(t))<\infty$
for all $t \in (0,\infty)$ and if it is right differentiable with
$\dot{\xi}(t)=\grad f(\xi(t))$ at all $t \in (0,l)$.
We say that a gradient curve $\xi$ is {\it complete} if it is defined on entire $[0,\infty)$.
\end{definition}

\begin{theorem}\label{th:cont}
{\rm (\cite[Theorem~5.11, Corollary~6.3]{Ogra}, \cite[Theorem~4.2]{GO})}
\begin{enumerate}[{\rm (i)}]
\item From any $\mu \in \cP^*(M)$, there starts a unique complete gradient curve
$\xi:[0,\infty) \lra \cP^*(M)$ of $f$ with $\xi(0)=\mu$.

\item Given any two gradient curves $\xi,\zeta:[0,\infty) \lra \cP^*(M)$ of $f$,
we have
\begin{equation}\label{eq:cont}
W_2 \big(\xi(t),\zeta(t) \big) \le e^{-Kt}W_2\big( \xi(0),\zeta(0) \big)
\end{equation}
for all $t \in [0,\infty)$.
\end{enumerate}
\end{theorem}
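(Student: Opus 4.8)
The plan is to construct $\xi$ in part~(i) by the minimizing-movement (implicit Euler) scheme in the metric $W_2$, to verify that its limit is a gradient curve in the sense of Definition~\ref{df:gc} using the infinitesimal structure provided by Theorem~\ref{th:angle} and Lemma~\ref{lm:gv}, and to deduce both the contraction~$(\ref{eq:cont})$ of part~(ii) and the uniqueness in part~(i) from a single Gronwall estimate applied to $t \mapsto W_2(\xi(t),\zeta(t))^2$. Throughout we use that compactness of $(M,g)$ makes $(\cP(M),W_2)$ a compact geodesic space, on which the lower semi-continuous $f$ attains its infimum; in particular $\inf_{\cP(M)} f>-\infty$, and any minimal geodesic between two measures is realized by a family of geodesics in $M$ through Proposition~\ref{pr:LV}.

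\emph{Existence.} Fix $\mu\in\cP^*(M)$ and a step size $\tau>0$, put $\mu_0^\tau:=\mu$, and define recursively $\mu_{k+1}^\tau$ to be a minimizer of $\cP(M)\ni\eta\mapsto f(\eta)+\frac{1}{2\tau}W_2(\eta,\mu_k^\tau)^2$; such a minimizer exists by lower semi-continuity of $f$ and compactness of $\cP(M)$, and the minimizers stay in $\cP^*(M)$. Summing the defining inequality $f(\mu_{k+1}^\tau)+\frac{1}{2\tau}W_2(\mu_{k+1}^\tau,\mu_k^\tau)^2\le f(\mu_k^\tau)$ yields the a priori bound $\sum_k W_2(\mu_{k+1}^\tau,\mu_k^\tau)^2\le 2\tau\{f(\mu)-\inf f\}$, and the $K$-convexity of $f$ upgrades this, by the standard comparison between consecutive Euler steps (valid once $1+K\tau>0$), to a locally uniform H\"older bound on the piecewise-geodesic interpolant $\bar\xi^\tau$. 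By Ascoli's theorem and compactness of $(\cP(M),W_2)$, a subsequence of $\bar\xi^\tau$ converges uniformly on compact time intervals to a locally Lipschitz curve $\xi:[0,\infty)\to\cP^*(M)$ with $\xi(0)=\mu$, along which $f$ is non-increasing; since $\cP(M)$ is compact the curve never escapes, so it is complete. It remains to identify $\xi$ as a gradient curve: at each $t>0$ one shows that $\xi$ is right differentiable with $\dot\xi(t)\in C_{\xi(t)}[\cP(M)]$ (Definition~\ref{df:rd}), and that this vector has length $|\grad f|(\xi(t))$ and points in the steepest-descent direction, hence equals $\grad f(\xi(t))$. This is done by combining the discrete Euler--Lagrange relation of the scheme --- each step is ``opposite'' to a nearly optimal descent direction --- with the $K$-convexity inequality for $f$ along the connecting geodesics and with the uniqueness of the steepest-descent direction from Lemma~\ref{lm:gv}.

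\emph{Contraction and uniqueness.} Let $\xi,\zeta$ be two gradient curves of $f$. Being locally Lipschitz, $D(t):=W_2(\xi(t),\zeta(t))^2$ is locally Lipschitz, hence differentiable at a.e.\ $t$. At such a $t$ pick a minimal geodesic in $\cP(M)$ from $\xi(t)$ to $\zeta(t)$ and denote by $\beta\in\Sigma_{\xi(t)}[\cP(M)]$ and $\bar\beta\in\Sigma_{\zeta(t)}[\cP(M)]$ its two endpoint directions. The first-variation inequality for the squared Wasserstein distance, available by the angle structure of Theorem~\ref{th:angle} (cf.\ \cite{GO}), together with $\dot\xi(t)=\grad f(\xi(t))$, $\dot\zeta(t)=\grad f(\zeta(t))$ and the bound $-D_\mu f(\cdot)\le|\grad f|(\mu)$ compared through the angle with the steepest-descent direction, gives
\[ \frac{1}{2}\frac{d}{dt}D(t) \le W_2\big(\xi(t),\zeta(t)\big)\,\big\{ D_{\xi(t)}f(\beta)+D_{\zeta(t)}f(\bar\beta) \big\}. \]
On the other hand, the $K$-convexity of $f$ along the chosen geodesic, differentiated at both endpoints and added, yields $D_{\xi(t)}f(\beta)+D_{\zeta(t)}f(\bar\beta)\le -K\,W_2(\xi(t),\zeta(t))$. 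Combining the two gives $\frac{d}{dt}D(t)\le -2K\,D(t)$ for a.e.\ $t$, and Gronwall's lemma produces $(\ref{eq:cont})$. Taking $\xi(0)=\zeta(0)$ forces $D\equiv0$, which proves the uniqueness claimed in~(i).

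\emph{The main obstacle.} The routine ingredients are the existence of discrete minimizers, the summed energy estimate, the compactness extraction and the Gronwall step. The genuine difficulty is the identification of the limit of the minimizing movements as a gradient curve in the precise infinitesimal sense of Definition~\ref{df:gc}: since $(\cP(M),W_2)$ is not a smooth manifold, one must work entirely inside the tangent-cone formalism of Theorem~\ref{th:angle}, pass carefully from finite Euler steps to the right derivative, and use Lemma~\ref{lm:gv} to pin down the limiting direction; establishing the first-variation inequality for $W_2^2$ used in the contraction argument requires the same care. An alternative that packages everything is to prove instead the evolution variational inequality
\[ \frac{1}{2}\frac{d^+}{dt}W_2\big(\xi(t),\eta\big)^2+\frac{K}{2}W_2\big(\xi(t),\eta\big)^2 \le f(\eta)-f\big(\xi(t)\big) \qquad \text{for all } \eta\in\cP^*(M), \]
from which right differentiability with $\dot\xi=\grad f(\xi)$ and the contraction $(\ref{eq:cont})$ both follow; but verifying the discrete version of this inequality still rests on exactly the combination of $K$-convexity and the geodesic structure of the Wasserstein space.
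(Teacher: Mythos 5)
First, a point of comparison: the paper offers no proof of this statement at all. Theorem~\ref{th:cont} is imported verbatim from \cite[Theorem~5.11, Corollary~6.3]{Ogra} and \cite[Theorem~4.2]{GO}, the only internal remark being that the uniqueness in (i) follows from the contraction \eqref{eq:cont}. Your outline --- minimizing movements for existence, identification of the limit via the tangent-cone structure, and a first-variation-plus-$K$-convexity (or EVI) argument for \eqref{eq:cont} and hence uniqueness --- is exactly the strategy of those references (and of \cite{AGS}), so as a roadmap it is the right one, and your Gronwall computation, including the differentiation of the $K$-convexity inequality at both endpoints to get $D_{\xi(t)}f(\beta)+D_{\zeta(t)}f(\bar\beta)\le -K\,W_2(\xi(t),\zeta(t))$, is correct.

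As a proof, however, the proposal stops precisely where the cited theorems begin. The two steps you label ``the main obstacle'' are not technical bookkeeping but the entire content of the quoted results, and you do not carry them out: (1) showing that the uniform limit of the piecewise-geodesic interpolants is right differentiable in the sense of Definition~\ref{df:rd} with $\dot\xi(t)=\grad f(\xi(t))$ requires passing the discrete Euler--Lagrange relation to the limit inside the tangent cone, controlling angles between nearby descent directions; Lemma~\ref{lm:gv} only gives uniqueness of the steepest-descent direction, not that the scheme selects it --- this is \cite[Theorem~5.11]{Ogra}. (2) The inequality $\tfrac12\frac{d}{dt}W_2(\xi(t),\zeta(t))^2\le W_2\{D_{\xi(t)}f(\beta)+D_{\zeta(t)}f(\bar\beta)\}$ needs both a first variation formula for $W_2$ along a right-differentiable curve (the main theorem of \cite{GO}; Theorem~\ref{th:angle} alone only gives existence of angles) and the angle comparison $D_\mu f(\beta)\ge -|\grad f|(\mu)\cos\angle_\mu(\grad f(\mu),\beta)$, which you invoke implicitly but which must be proved for $K$-convex $f$ on this non-Alexandrov space; moreover, differentiating $t\mapsto W_2(\xi(t),\zeta(t))^2$ with \emph{both} endpoints moving requires splitting the increment and controlling the moving base point (or the doubling-of-variables/EVI route you mention), which is again glossed over. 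So the proposal is a correct account of how the cited papers argue, but it does not constitute an independent proof of the statement; to be self-contained it would have to establish points (1) and (2), or derive the evolution variational inequality from the discrete scheme as you suggest in your closing remark.
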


To be precise, the uniqueness in (i) is a consequence of
the $K$-contraction property \eqref{eq:cont}.
Therefore the {\it gradient flow} $G:[0,\infty) \times \cP^*(M) \lra \cP^*(M)$
of $f$, given as $G(t,\mu)=\xi(t)$ in Theorem~\ref{th:cont}(i),
is uniquely determined and extended to the closure
$G:[0,\infty) \times \overline{\cP^*(M)} \lra \overline{\cP^*(M)}$ continuously.

\subsection{$m$-relative entropy and the porous medium/fast diffusion equation}

We recall basic notions of calculus on weighted Riemannian manifolds
$(M,\omega)$ with $\omega=e^{-\psi}\vol_g$.
For a $C^1$-vector field $V$ on $M$, we define
the {\it weighted divergence} as
\[ \div_{\omega}V:=\div V-\langle V,\nabla\psi \rangle, \]
where $\div V$ denotes the usual divergence of $V$ for $(M,\vol_g)$.
Note that, for any $f \in C^1(M)$,
\begin{align*}
\int_M \langle \nabla f,V \rangle \,d\omega
&=\int_M \langle \nabla f,e^{-\psi}V \rangle \,d\!\vol_g
 =-\int_M f\div(e^{-\psi}V) \,d\!\vol_g \\
&= -\int_M f\div_{\omega}V \,d\omega.
\end{align*}
For $f \in C^2(M)$, the {\it weighted Laplacian} is defined by
\[ \Delta^{\omega}f:=\div_{\omega}(\nabla f)
 =\Delta f -\langle \nabla f, \nabla\psi \rangle. \]
Then it is an established fact that the gradient flow of the corresponding relative entropy
(or the {\it free energy})
\[ \Ent_{\omega}(\rho\omega) =\int_M \rho \ln\rho \,d\omega
 =\int_M (\rho e^{-\psi}) \ln(\rho e^{-\psi}) \,d\!\vol_g +\int_M \psi \,d\mu \]
produces a solution to the associated {\it heat equation}
(or the {\it Fokker-Planck equation})
\[ \frac{\del\rho}{\del t} =\Delta^{\omega}\rho
 =e^{\psi}\big\{ \Delta(\rho e^{-\psi})
 +\div\big( (\rho e^{-\psi})\nabla\psi \big) \big\}. \]
See \cite[Theorem~5.1]{JKO}, \cite[Subsection~8.4.2]{Vi1} for the Euclidean case,
\cite[Theorem~6.6]{Ogra}, \cite[Theorem~4.6]{GO}, \cite[Corollary~23.23]{Vi2}
for the Riemannian case, and \cite[Section~7]{OS} for the Finsler case.

We shall see that a similar argumentation gives a weak solution to
the {\it porous medium equation} for $m>1$ or the {\it fast diffusion equation}
for $m<1$ (with drift) of the form
\begin{equation}\label{eq:pme}
\frac{\del\rho}{\del t} =\frac{1}{m}\Delta^{\omega}(\rho^m)
 +\div_{\omega}(\rho \nabla\Psi)
\end{equation}
as gradient flow of the $m$-relative entropy $H_m(\cdot|\nu)$.
This is demonstrated by Otto~\cite{Ot} for the Tsallis entropy as well as
$H_m(\cdot|N_m(0,cI_n))$ with respect to the $m$-Gaussian measures
$N_m(0,cI_n)$ on $(\R^n,dx)$, and by Villani~\cite[Theorem~23.19]{Vi2}
on weighted Riemannian manifolds in a different way of interpretation from ours.
Here we present a precise proof along the strategy of \cite{Ogra}, \cite{GO}.
Recall that $\nu=\exp_m(-\Psi)\omega$.

\begin{theorem}[Gradient flow of $H_m$]\label{th:gf}
Let $(M,g)$ be compact, $m \in ((n-1)/n,1) \cup (1,2]$ and $\Psi$ be Lipschitz.
If a curve $(\mu_t)_{t \in [0,\infty)} \subset \cP_{\ac}(M,\omega)$ is a gradient
curve of $H_m(\cdot|\nu)$, then its density function $\rho_t$ is a weak solution
to the porous medium or the fast diffusion equation $(\ref{eq:pme})$.
To be precise,
\begin{equation}\label{eq:wpme}
\int_M \phi_{t_1} \,d\mu_{t_1} -\int_M \phi_{t_0} \,d\mu_{t_0}
 = \int_{t_0}^{t_1} \int_M \bigg\{ \frac{\del \phi_t}{\del t}
 +\frac{1}{m}\rho_t^{m-1}\Delta^{\omega}\phi_t
 +\frac{1}{m-1}\langle \nabla \phi_t,\nabla(\sigma^{m-1}) \rangle \bigg\} \,d\mu_t \,dt
\end{equation}
holds for all $0\le t_0<t_1<\infty$ and $\phi \in C^{\infty}(\R \times M)$,
where $\mu_t=\rho_t \omega$, $\phi_t=\phi(t,\cdot)$.
\end{theorem}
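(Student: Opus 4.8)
The plan is to realize the right-hand side of \eqref{eq:pme} as the $W_2$-gradient flow of $H_m(\cdot|\nu)$ by identifying the velocity field of a gradient curve with (minus) the gradient of the first variation of $H_m$, following the metric-geometric strategy used for the heat flow in \cite{Ogra} and \cite{GO}. Heuristically, for $\mu=\rho\omega$ the first variation is $\delta H_m/\delta\rho=(m-1)^{-1}(\rho^{m-1}-\sigma^{m-1})$, so the associated continuity equation is $\partial_t\rho=\div_\omega\big(\rho\,\nabla[(m-1)^{-1}(\rho^{m-1}-\sigma^{m-1})]\big)$. Since $\rho\,\nabla[(m-1)^{-1}\rho^{m-1}]=\rho^{m-1}\nabla\rho=m^{-1}\nabla(\rho^m)$, and since $\sigma=\exp_m(-\Psi)$ gives $\sigma^{m-1}=1-(m-1)\Psi$ on $M_0$ (with $\sigma^{m-1}=0$ off $M_0$ when $m>1$), hence $\nabla[(m-1)^{-1}\sigma^{m-1}]=-\nabla\Psi$, this continuity equation is exactly $\partial_t\rho=m^{-1}\Delta^\omega(\rho^m)+\div_\omega(\rho\nabla\Psi)$. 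Thus the content of the theorem is to make this heuristic rigorous within the framework of Subsections~\ref{ssc:gf1}, \ref{ssc:gf2}.

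I would first check that the abstract theory of Subsection~\ref{ssc:gf2} applies. Lower semi-continuity of $H_m(\cdot|\nu)$ is Lemma~\ref{lm:lsc}; and on the compact manifold $M$ the quantities $\Ric$, $\Hess\psi$, $\nabla\psi$ and $\Hess\Psi$ are bounded, so (arguing as in the proof of (A)~$\Rightarrow$~(B) of Theorem~\ref{th:mCD}, via Claim~\ref{cl:Jt} and the Jacobian equation, but with the relevant finite lower bounds) one obtains that $H_m(\cdot|\nu)$ is $\lambda$-convex in the weak sense for some $\lambda\in\R$; here the restriction $m\in((n-1)/n,1)\cup(1,2]$ is used to keep $N=1/(1-m)$ bounded away from $n$ and the integrands controlled. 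Theorem~\ref{th:cont} then supplies the gradient flow. Next, by Proposition~\ref{pr:dHm} the absolute gradient is $|\grad H_m|(\mu_t)=\sqrt{I_m(\mu_t|\nu)}$ for each $t>0$, and the proof of that proposition identifies the unique steepest-descent direction $\alpha_t$ of Lemma~\ref{lm:gv} with the geodesic $s\mapsto(\cT^t_s)_\sharp\mu_t$, $\cT^t_s(x)=\exp_x\!\big(s\,I_m(\mu_t|\nu)^{-1/2}V_t(x)\big)$, where formally $V_t:=-\nabla[(m-1)^{-1}(\rho_t^{m-1}-\sigma^{m-1})]$; the matching upper bound $D_{\mu_t}H_m(\alpha_t)\le-\sqrt{I_m(\mu_t|\nu)}$ comes from the same first-variation (HWI-type) computation as in the proof of Theorem~\ref{th:LS}. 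Hence the tangent vector $\dot\mu_t=\grad H_m(\mu_t)$ corresponds to the velocity field $V_t$.

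Then, for fixed $\phi\in C^\infty(\R\times M)$, the map $t\mapsto\int_M\phi_t\,d\mu_t$ is locally absolutely continuous on $(0,\infty)$ (the curve is locally Lipschitz in $W_2$, $\phi_t$ is uniformly Lipschitz in $x$, $\del\phi/\del t$ bounded), and by right differentiability (Definition~\ref{df:rd}) together with McCann's description of $W_2$-geodesics (Theorems~\ref{th:Mc}, \ref{th:FG}) --- exactly as in \cite[Section~6]{Ogra}, \cite[Section~4]{GO} --- its derivative is the pairing of $d\phi_t$ with $\dot\mu_t$:
\[ \frac{d}{dt}\int_M\phi_t\,d\mu_t=\int_M\frac{\del\phi_t}{\del t}\,d\mu_t+\int_M\langle\nabla\phi_t,V_t\rangle\,d\mu_t \qquad\text{for a.e.\ }t. \]
Splitting $V_t$, using $\rho_t\,\nabla(\rho_t^{m-1})=\tfrac{m-1}{m}\nabla(\rho_t^m)$ and integration by parts with the identities for $\div_\omega$, $\Delta^\omega$ recalled in Subsection~\ref{ssc:gf1} (so the $\rho_t$-part of $\int_M\langle\nabla\phi_t,V_t\rangle\,d\mu_t$ equals $\tfrac{1}{m}\int_M\rho_t^m\,\Delta^\omega\phi_t\,d\omega=\tfrac{1}{m}\int_M\rho_t^{m-1}\Delta^\omega\phi_t\,d\mu_t$), while the $\sigma^{m-1}$-part contributes $\tfrac{1}{m-1}\int_M\langle\nabla\phi_t,\nabla(\sigma^{m-1})\rangle\,d\mu_t$ directly, gives
\[ \frac{d}{dt}\int_M\phi_t\,d\mu_t=\int_M\bigg\{\frac{\del\phi_t}{\del t}+\frac{1}{m}\rho_t^{m-1}\Delta^\omega\phi_t+\frac{1}{m-1}\langle\nabla\phi_t,\nabla(\sigma^{m-1})\rangle\bigg\}\,d\mu_t; \]
integrating over $[t_0,t_1]$ yields \eqref{eq:wpme}. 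All integrals are finite because $\mu_t\in\cP^*(M)$ forces $\rho_t^m\in L^1(M,\omega)$ (Remark~\ref{rm:Hm}) and $\sigma^{m-1}$ is Lipschitz since $\Psi$ is.

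The main obstacle is the middle step: rigorously identifying the steepest-descent direction $\alpha_t$ with the field $V_t$, i.e.\ showing the descent rate $-D_{\mu_t}H_m(\alpha_t)$ actually attains $\sqrt{I_m(\mu_t|\nu)}$ and not merely that it is bounded by it. This needs the first-variation upper bound along all geodesics through $\mu_t$, hence sufficient integrability of $\rho_t$ along the curve; here the a priori membership $\rho_t\in L^m(M,\omega)$ together with $m\le2$ is used, and when $\rho_t$ is not Lipschitz one argues by approximation as in \cite{Ogra}. A further point is that $V_t$ need not exist as a classical vector field --- $\rho_t$ need not be differentiable --- so it must be understood through the tangent-cone structure of $(\cP(M),W_2)$; the passage to the weak form \eqref{eq:wpme}, which only involves $\rho_t^{m-1}$ and the Lipschitz function $\sigma^{m-1}$, is precisely what makes the statement well posed without additional regularity.
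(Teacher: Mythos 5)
Your outline captures the correct heuristic (the first variation of $H_m$ is $(\rho^{m-1}-\sigma^{m-1})/(m-1)$, and the final integration by parts is the right computation), but the two steps that carry the actual weight of the proof are not supplied. First, your claim that on a compact manifold $H_m(\cdot|\nu)$ is $\lambda$-convex for some $\lambda\in\R$ ``because $\Ric$, $\Hess\psi$, $\nabla\psi$, $\Hess\Psi$ are bounded'' is false: by the implication (B) $\Rightarrow$ (A) of Theorem~\ref{th:mCD}, displacement $\lambda$-convexity for \emph{any} finite $\lambda$ already forces $\Ric_N\ge 0$ on $\overline{M_0}$ (the Ricci part of that argument is insensitive to the value of $K$), and Claim~\ref{cl:Jt} genuinely needs $\Ric_N\ge0$, not a negative lower bound. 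So on, say, a compact hyperbolic manifold with $\omega=\vol_g$ the entropy is not semi-convex at all, and the appeal to Theorem~\ref{th:cont} and to the $K$-convex framework behind Lemma~\ref{lm:gv} is unjustified in the stated generality (existence of the flow is in any case not needed, since the theorem assumes a gradient curve is given).

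Second, and more centrally, the identification of $\dot\mu_t$ with the vector field $V_t=-\nabla[(\rho_t^{m-1}-\sigma^{m-1})/(m-1)]$ and the resulting formula $\frac{d}{dt}\int\phi_t\,d\mu_t=\int\partial_t\phi_t\,d\mu_t+\int\langle\nabla\phi_t,V_t\rangle\,d\mu_t$ is exactly the content to be proved, not something that can be imported ``exactly as in'' \cite{Ogra}, \cite{GO}: those arguments concern the linear relative entropy, and Proposition~\ref{pr:dHm} is only available for Lipschitz densities, which a gradient-curve density $\rho_t$ need not be; your closing remark that ``one argues by approximation'' is precisely the missing proof. The paper handles this by a minimizing-movement device: for each $t$ it takes $\mu^{\delta}$ minimizing $\mu\mapsto H_m(\mu|\nu)+W_2(\mu,\mu_t)^2/2\delta$, proves Claim~\ref{cl:gf} (existence and \emph{absolute continuity} of the minimizer via a mollification of the singular part, which is where $n(1-m)<1$, i.e.\ $m>(n-1)/n$, enters; energy convergence; and $L^1$-convergence of $(\rho^{\delta})^m$ via Lemma~\ref{lm:gf}, which is where $m\le2$ enters), perturbs $\mu^{\delta}$ by $\exp_x(\ve\nabla\phi_t)$ using only the Jacobian equation and the smooth test function (so no differentiability of $\rho_t$ is ever used), and then invokes \cite[(4)]{GO} to compare $\mu^{\delta}$ with $\mu_{t+\delta}$. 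None of this machinery appears in your proposal, and your account of where the hypotheses $m>(n-1)/n$ and $m\le 2$ are used (``to keep $N$ away from $n$'', ``integrability of $\rho_t$ along the curve'') does not reflect their actual role.
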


\begin{proof}
Fix $t \in (0,\infty)$ and, given small $\delta>0$, choose
$\mu^{\delta} \in \cP(M)$ as a minimizer of the function
\[ \mu\ \longmapsto\ H_m(\mu|\nu) +\frac{W_2(\mu,\mu_t)^2}{2\delta}. \]
We postpone the proof of the following technical claim until the end of the section.
We remark that the hypotheses $m>(n-1)/n$ and $m \le 2$ come into play
in the proof of Claim~\ref{cl:gf}(i) and (iii), respectively.

\begin{claim}\label{cl:gf}
\begin{enumerate}[{\rm (i)}]
\item Such $\mu^{\delta}$ indeed exists and is absolutely continuous
with respect to $\omega$.

\item We have
\[ \lim_{\delta \to 0} \frac{W_2(\mu^{\delta},\mu_t)^2}{2\delta}=0, \qquad
 \lim_{\delta \to 0} H_m(\mu^{\delta}|\nu)=H_m(\mu_t|\nu). \]
In particular, $\mu^{\delta}$ converges to $\mu_t$ weakly.

\item Moreover, by putting $\mu^{\delta}=\rho^{\delta}\omega$,
$(\rho^{\delta})^m$ converges to $\rho_t^m$ in $L^1(M,\omega)$.
\end{enumerate}
\end{claim}

Take a Lipschitz function $\varphi:M \lra \R$ such that
$\cT(x):=\exp_x(\nabla\varphi(x))$ gives the optimal transport
from $\mu^{\delta}$ to $\mu_t$.
We consider the transport $\mu^{\delta}_{\ve}:=(\cF_{\ve})_{\sharp}\mu^{\delta}$
in another direction for small $\ve>0$, where $\cF_{\ve}(x):=\exp_x(\ve\nabla \phi_t(x))$.
It immediately follows from the choice of $\mu^{\delta}$ that
\begin{equation}\label{eq:gf1}
H_m(\mu^{\delta}_{\ve}|\nu)
 +\frac{W_2(\mu^{\delta}_{\ve},\mu_t)^2}{2\delta}
 \ge H_m(\mu^{\delta}|\nu) +\frac{W_2(\mu^{\delta},\mu_t)^2}{2\delta}.
\end{equation}
We first estimate the difference of distances.
Observe that, as $(\cF_{\ve} \times \cT)_{\sharp}\mu^{\delta}$ is a
(not necessarily optimal) coupling of $\mu^{\delta}_{\ve}$ and $\mu_t$,
\begin{align*}
&\limsup_{\ve \to 0}
 \frac{W_2(\mu^{\delta}_{\ve},\mu_t)^2-W_2(\mu^{\delta},\mu_t)^2}{\ve} \\
&\le \limsup_{\ve \to 0}\frac{1}{\ve}\int_M \big\{
 d\big( \cF_{\ve}(x),\cT(x) \big)^2-d\big( x,\cT(x) \big)^2 \big\} \,d\mu^{\delta}(x) \\
&= -\int_M 2\langle \nabla \phi_t,\nabla\varphi \rangle \,d\mu^{\delta}.
\end{align*}
We used the first variation formula for the distance $d$ in the last line
(cf.\ \cite[Theorem~II.4.1]{Ch}).
Thanks to the compactness of $M$, there is a constant $C>0$ such that
\[ \phi_t\big( \cT(x) \big) \le \phi_t(x)+\langle \nabla \phi_t(x),\nabla\varphi(x) \rangle
 +Cd\big( x,\cT(x) \big)^2. \]
Thus we obtain, by virtue of Claim~\ref{cl:gf}(ii),
\begin{align*}
&\liminf_{\delta \to 0} \frac{1}{2\delta} \limsup_{\ve \to 0}
 \frac{W_2(\mu^{\delta}_{\ve},\mu_t)^2-W_2(\mu^{\delta},\mu_t)^2}{\ve}
 \le -\limsup_{\delta \to 0} \frac{1}{\delta} \int_M
 \langle \nabla \phi_t,\nabla\varphi \rangle \,d\mu^{\delta} \\
&\le \liminf_{\delta \to 0} \frac{1}{\delta} \bigg[ \int_M
 \{ \phi_t-\phi_t(\cT) \} \,d\mu^{\delta} +CW_2(\mu^{\delta},\mu_t)^2 \bigg] \\
&= \liminf_{\delta \to 0} \frac{1}{\delta} \bigg\{
 \int_M \phi_t \,d\mu^{\delta} -\int_M \phi_t \,d\mu_t \bigg\}.
\end{align*}

Next we calculate the difference of entropies in $(\ref{eq:gf1})$.
We put $\mu^{\delta}=\rho^{\delta}\omega$,
$\mu^{\delta}_{\ve}=\rho^{\delta}_{\ve} \omega$ and
$\bJ^{\omega}_{\ve}:=e^{\psi-\psi(\cF_{\ve})}\det(D\cF_{\ve})$.
Then we obtain from the Jacobian equation
$\rho^{\delta}_{\ve}(\cF_{\ve})\bJ^{\omega}_{\ve}=\rho^{\delta}$
(Theorem~\ref{th:MA}) that
\begin{align*}
&H_m(\mu^{\delta}_{\ve}|\nu) -\frac{1}{m}\int_M \sigma^m \,d\omega
 = \frac{1}{m(m-1)} \int_M \{ (\rho^{\delta}_{\ve})^{m-1}
 -m\sigma^{m-1} \} \,d\mu^{\delta}_{\ve} \\
&= \frac{1}{m(m-1)} \int_M \{ \rho^{\delta}_{\ve}(\cF_\ve)^{m-1}
 -m\sigma(\cF_{\ve})^{m-1} \} \,d\mu^{\delta} \\
&= \frac{1}{m(m-1)} \int_M \bigg\{ \bigg(
 \frac{\rho^{\delta}}{\bJ^{\omega}_{\ve}} \bigg)^{m-1}
 -m\sigma(\cF_{\ve})^{m-1} \bigg\} \,d\mu^{\delta}.
\end{align*}
Thus we have
\begin{align*}
&H_m(\mu^{\delta}|\nu) -H_m(\mu^{\delta}_{\ve}|\nu) \\
&=\frac{1}{m(m-1)} \int_M \Big[ (\rho^{\delta})^{m-1} \{
 1-(\bJ^{\omega}_{\ve})^{1-m} \} -m\{ \sigma^{m-1}-\sigma(\cF_{\ve})^{m-1}
 \} \Big] \,d\mu^{\delta}.
\end{align*}
Note that, as $\det(D\cF_0)=1$,
\begin{align*}
\lim_{\ve \to 0}\frac{\bJ^{\omega}_{\ve}-1}{\ve}
&=\lim_{\ve \to 0}\frac{e^{\psi-\psi(\cF_{\ve})}\det(D\cF_{\ve})-1}{\ve}
 =\trace(\Hess\phi_t) -\langle \nabla \phi_t,\nabla\psi \rangle \\
&=\Delta \phi_t -\langle \nabla \phi_t,\nabla\psi \rangle
 =\Delta^{\omega} \phi_t.
\end{align*}
Hence we obtain, together with Claim~\ref{cl:gf}(iii),
\begin{align}
&\lim_{\ve \to 0}
 \frac{H_m(\mu^{\delta}|\nu)-H_m(\mu^{\delta}_{\ve}|\nu)}{\ve} \nonumber\\
&= \int_M \bigg\{ \frac{1}{m}(\rho^{\delta})^{m-1}\Delta^{\omega}\phi_t
 +\frac{1}{m-1}\langle \nabla \phi_t,\nabla(\sigma^{m-1}) \rangle \bigg\}
 \,d\mu^{\delta} \label{eq:gf2}\\
&\to \int_M \bigg\{ \frac{1}{m}\rho_t^{m-1}\Delta^{\omega}\phi_t
 +\frac{1}{m-1}\langle \nabla \phi_t,\nabla(\sigma^{m-1}) \rangle \bigg\}
 \,d\mu_t \nonumber
\end{align}
as $\delta$ tends to zero.

These together imply
\[ \liminf_{\delta \to 0} \frac{1}{\delta} \bigg\{
 \int_M \phi_t \,d\mu^{\delta} -\int_M \phi_t \,d\mu_t \bigg\}
 \ge \int_M \bigg\{ \frac{1}{m}\rho_t^{m-1}\Delta^{\omega}\phi_t
 +\frac{1}{m-1}\langle \nabla \phi_t,\nabla(\sigma^{m-1}) \rangle \bigg\} \,d\mu_t. \]
Moreover, equality holds since we can change $\phi$ into $-\phi$.
Recall from \cite[(4)]{GO} (see also \cite[Lemma~6.4]{Ogra}) that
\[ \lim_{\delta \to 0} \frac{1}{\delta}\bigg\{ \int_M h \,d\mu_{t+\delta}
 -\int_M h \,d\mu^{\delta} \bigg\}=0 \]
holds for all $h \in C^{\infty}(M)$.
Therefore we conclude
\begin{align*}
&\lim_{\delta \to 0} \frac{1}{\delta} \bigg\{
 \int_M \phi_{t+\delta} \,d\mu_{t+\delta} -\int_M \phi_t \,d\mu_t \bigg\} \\
&= \lim_{\delta \to 0} \frac{1}{\delta} \bigg\{
 \int_M (\phi_{t+\delta}-\phi_t) \,d\mu_{t+\delta} +\int_M \phi_t \,d\mu_{t+\delta}
 -\int_M \phi_t \,d\mu_t \bigg\} \\
&= \int_M \bigg\{ \frac{\del \phi_t}{\del t}
 +\frac{1}{m}\rho_t^{m-1}\Delta^{\omega}\phi_t
 +\frac{1}{m-1}\langle \nabla \phi_t,\nabla(\sigma^{m-1}) \rangle \bigg\} \,d\mu_t
\end{align*}
as desired.
$\qedd$
\end{proof}

\begin{remark}\label{rm:gfac}
In Theorem~\ref{th:gf}, assuming $\mu_t$ is absolutely continuous is redundant.
For $m>1$, $H_m(\mu_t|\nu)<\infty$ immediately implies $\mu_t \in \cP_{\ac}(M,\omega)$.
For $m<1$, if $\mu_t$ with $t>0$ has a nontrivial singular part $\mu^s$,
then the modification of $\mu_t$ as in the proof of Claim~\ref{cl:gf}(i) with
$\mu^{\delta}=\mu_t$ gives a measure
$\hat{\mu}_r \in \cP_{\ac}(M,\omega)$ for small $r>0$ such that
\[ W_2(\mu_t,\hat{\mu}_r)^2 \le \mu^s(M)r^2, \qquad
 H_m(\hat{\mu}_r|\nu) \le H_m(\mu_t|\nu) -C(\omega,m)\mu^s(M)r^{n(1-m)} \]
with $C>0$.
As $n(1-m)<1$, these yield $|\grad H_m(\cdot|\nu)|(\mu_t)=\infty$
as $r$ goes to zero, which contradicts the definition of gradient curves
(compare this with \cite[Theorem~10.4.8]{AGS}).
\end{remark}

Recall from Theorem~\ref{th:mCD} that the entropy $H_m(\cdot|\nu)$
is $K$-convex if (and only if) $\Ric_N \ge 0$ and $\Hess\Psi \ge K$.
Combining this with Theorems~\ref{th:cont}, \ref{th:gf},
we obtain the following.

\begin{corollary}\label{cr:gf}
Suppose that $(M,g)$ is compact and $\overline{M_0}$ is convex.
Then the weak solution $(\mu_t)_{t \in [0,\infty)} \subset \cP_{\ac}(\overline{M_0},\omega)$
to the porous medium $($or the fast diffusion$)$ equation \eqref{eq:pme}
constructed in Theorem~$\ref{th:gf}$ enjoys the $K$-contraction property $(\ref{eq:cont})$
under the assumptions $\Ric_N \ge 0$ and $\Hess\Psi \ge K$ on $\overline{M_0}$.
\end{corollary}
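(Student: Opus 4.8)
The plan is to deduce this by feeding the $K$-convexity produced by Theorem~\ref{th:mCD} into the abstract gradient-flow theory of Subsection~\ref{ssc:gf2}, and then invoking Theorem~\ref{th:gf} to identify the gradient curves with the weak solutions of \eqref{eq:pme}. First I would note that, since $(M,g)$ is compact, $\cP^2(\overline{M_0})=\cP(\overline{M_0})$, and that the hypothesis ``$\overline{M_0}$ is convex'' is exactly the side condition appearing in parts (B) and (C) of Theorem~\ref{th:mCD}. Hence the assumptions $\Ric_N\ge 0$ and $\Hess\Psi\ge K$ on $\overline{M_0}$ give, by the implication (A)$\Rightarrow$(C), for every pair $\mu_0,\mu_1\in\cP(\overline{M_0})$ a minimal geodesic joining them along which \eqref{eq:mCD} holds; that is, $H_m(\cdot|\nu)$ is $K$-convex in the weak sense on $\cP(\overline{M_0})$. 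Moreover $\cP(\overline{M_0})$ is a geodesically convex subset of $\cP(M)$: by Theorem~\ref{th:FG} (for absolutely continuous endpoints) and Proposition~\ref{pr:LV} (in general), any minimal geodesic of $\cP(M)$ between measures supported in $\overline{M_0}$ is realized through a family of minimal geodesics of $M$ with endpoints in $\overline{M_0}$, hence stays in $\overline{M_0}$ by convexity. (When $m<1$ one has $\overline{M_0}=M$, so there is nothing to restrict and $H_m(\cdot|\nu)$ is $K$-convex on all of $\cP(M)$ directly.)

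Next I would check the remaining standing hypotheses of Subsection~\ref{ssc:gf2} for $f=H_m(\cdot|\nu)$ on $\cP(\overline{M_0})$: it is lower semi-continuous for the weak topology by Lemma~\ref{lm:lsc}, and it is not identically $+\infty$ (for instance $\nu(M)^{-1}\nu$ has bounded density and lies in $\cP_{\ac}(\overline{M_0},\omega)$, so $H_m(\nu(M)^{-1}\nu|\nu)<\infty$, using $\omega(M)<\infty$ and boundedness of $\sigma$). Since $\overline{M_0}$ is a convex compact subset of the compact Riemannian manifold $M$, it is a compact Alexandrov space, so the infinitesimal apparatus of Subsection~\ref{ssc:gf1} (Theorem~\ref{th:angle}, the space of directions, the tangent cone, right differentiability) and the gradient-flow results of Subsection~\ref{ssc:gf2} (the absolute gradient, Lemma~\ref{lm:gv}, Definition~\ref{df:gc}, Theorem~\ref{th:cont}) all apply with $\overline{M_0}$ in place of $M$. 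Then Theorem~\ref{th:cont}(ii) yields, for any two gradient curves $\xi,\zeta\colon[0,\infty)\to\cP_{\ac}(\overline{M_0},\omega)$ of $H_m(\cdot|\nu)$,
\[ W_2\big(\xi(t),\zeta(t)\big)\le e^{-Kt}\,W_2\big(\xi(0),\zeta(0)\big),\qquad t\in[0,\infty), \]
which is \eqref{eq:cont}. By Theorem~\ref{th:gf} together with Remark~\ref{rm:gfac}, these gradient curves are precisely the weak solutions in $\cP_{\ac}(\overline{M_0},\omega)$ of the porous medium (or fast diffusion) equation \eqref{eq:pme} constructed there, and this finishes the argument.

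The step I expect to require the most care is, for $m>1$, the passage from $\cP(M)$ to $\cP(\overline{M_0})$. Finiteness of $H_m(\mu|\nu)$ does \emph{not} by itself force $\supp\mu\subseteq\overline{M_0}$---a density bounded on all of $M$ still has finite $m$-relative entropy---whereas Theorem~\ref{th:mCD} furnishes $K$-convexity only along geodesics whose endpoints' supports lie in $\overline{M_0}$; so one must genuinely run the gradient-flow theory intrinsically on $\overline{M_0}$. The points to pin down---that $(\cP(\overline{M_0}),W_2)$ is geodesically convex in $(\cP(M),W_2)$, that $\overline{M_0}$ inherits a lower curvature bound from $M$ so that Subsection~\ref{ssc:gf1} is available, and that the minimizing-movement scheme behind Theorem~\ref{th:gf}, when started in $\cP_{\ac}(\overline{M_0},\omega)$, produces approximants supported in $\overline{M_0}$ so the whole flow stays in $\cP(\overline{M_0})$---all rest on the convexity of $\overline{M_0}$ (the last also on the fact that the drift $-\rho\nabla\Psi$ in \eqref{eq:pme} points into $M_0$ near $\partial M_0$), but each deserves to be spelled out. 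For $m<1$ this obstacle disappears entirely since $\overline{M_0}=M$.
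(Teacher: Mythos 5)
Your argument is correct and is essentially the paper's own proof: the paper disposes of this corollary in one line by combining Theorem~\ref{th:mCD} (the $K$-convexity of $H_m(\cdot|\nu)$ under $\Ric_N\ge 0$ and $\Hess\Psi\ge K$), Theorem~\ref{th:cont}(ii) (the $K$-contraction of gradient curves from the abstract theory of Subsection~\ref{ssc:gf2}), and Theorem~\ref{th:gf} (identification of gradient curves with weak solutions of \eqref{eq:pme}), which is exactly your route. The extra care you devote to running the theory intrinsically on $\cP(\overline{M_0})$ when $m>1$ goes beyond what the paper records but is fully consistent with its approach.
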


The argument in the proof of Theorem~\ref{th:gf} also shows that the absolute gradient
of $H_m(\cdot|\nu)$ at $\mu$ coincides with the square root of
the $m$-relative Fisher information introduced in $(\ref{eq:Im})$,
for general $m$.
Compare this with Theorem~\ref{th:LS}.

\begin{proposition}\label{pr:dHm}
Take $m \in [(n-1)/n,1) \cup (1,\infty)$ and
$\mu=\rho\omega \in \cP_{\ac}(M,\omega)$ such that $\rho$ is Lipschitz.
For any $(d^2/2)$-convex function $\varphi:M \lra \R$ and
the corresponding transport $\mu_t:=(\cT_t)_{\sharp}\mu$ with
$\cT_t(x):=\exp_x(t\nabla\varphi(x))$, $t \ge 0$, it holds that
\[ \lim_{t \to 0}\frac{H_m(\mu_t|\nu)-H_m(\mu|\nu)}{t} =\frac{1}{m-1}
 \int_M \langle \nabla(\rho^{m-1}-\sigma^{m-1}),\nabla\varphi \rangle
 \,d\mu. \]
In particular, we have $|\grad[H_m(\cdot|\nu)]|(\mu)=\sqrt{I_m(\mu|\nu)}$ and,
if $|\grad[H_m(\cdot|\nu)]|(\mu)<\infty$,
then the negative gradient vector $\grad[H_m(\cdot|\nu)](\mu)$ is achieved by
\[ \nabla\varphi =-\nabla \bigg( \frac{\rho^{m-1}-\sigma^{m-1}}{m-1} \bigg). \]
\end{proposition}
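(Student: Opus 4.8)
The plan is to obtain the displayed limit by a direct computation from the explicit form of $H_m(\mu_t|\nu)$, and then to read off both the value of the absolute gradient and the gradient vector via the Cauchy--Schwarz inequality, exactly along the lines of the computation performed in the proof of Theorem~\ref{th:gf}. Since $\mu_t=\rho_t\omega\in\cP_{\ac}(M,\omega)$ by Theorem~\ref{th:MA}, we may write
\[ H_m(\mu_t|\nu)=\frac{1}{m(m-1)}\int_M \rho_t^m\,d\omega
 -\frac{1}{m-1}\int_M \sigma^{m-1}\,d\mu_t+\frac{1}{m}\int_M\sigma^m\,d\omega, \]
where the last term is independent of $t$. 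First I would transport both remaining integrals back to $\mu$ along $\cT_t$: the Jacobian equation gives $\int_M \rho_t^m\,d\omega=\int_M(\rho_t\circ\cT_t)^{m-1}\,d\mu=\int_M\rho^{m-1}(\bJ^\omega_t)^{1-m}\,d\mu$, while $\int_M\sigma^{m-1}\,d\mu_t=\int_M\sigma(\cT_t)^{m-1}\,d\mu$. Using $\frac{d}{dt}\big|_{t=0}\cT_t(x)=\nabla\varphi(x)$, so that $\frac{d}{dt}\big|_{t=0}\sigma(\cT_t)^{m-1}=\langle\nabla(\sigma^{m-1}),\nabla\varphi\rangle$, together with $\frac{d}{dt}\big|_{t=0}\bJ^\omega_t=\Delta^\omega\varphi$ (valid $\mu$-a.e., since $\varphi$ is twice differentiable a.e.\ and hence $\frac{d}{dt}\big|_{t=0}(\bJ^\omega_t)^{1-m}=(1-m)\Delta^\omega\varphi$), I would differentiate under the integral sign. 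The external term contributes $-\frac{1}{m-1}\int_M\langle\nabla(\sigma^{m-1}),\nabla\varphi\rangle\,d\mu$ directly, and the internal term contributes $-\frac{1}{m}\int_M\rho^m\Delta^\omega\varphi\,d\omega$; integrating the latter by parts on the compact $M$ (no boundary terms) and invoking $\rho^{m-1}\nabla\rho=\frac1m\nabla(\rho^m)=\frac1{m-1}\rho\,\nabla(\rho^{m-1})$ rewrites it as $\frac1{m-1}\int_M\langle\nabla(\rho^{m-1}),\nabla\varphi\rangle\,d\mu$. Adding the two contributions yields the asserted formula.

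Granted the formula, the Cauchy--Schwarz inequality in $L^2(\mu)$ shows that for every $(d^2/2)$-convex $\varphi$ the above displacement derivative is at least $-\sqrt{I_m(\mu|\nu)}\,\|\nabla\varphi\|_{L^2(\mu)}$, with equality exactly when $\nabla\varphi$ is a nonnegative multiple of $-\nabla\big((\rho^{m-1}-\sigma^{m-1})/(m-1)\big)$; since the metric speed of $(\mu_t)_t$ at $t=0$ equals $\|\nabla\varphi\|_{L^2(\mu)}$, this gives $-D_\mu[H_m(\cdot|\nu)](\alpha)\le\sqrt{I_m(\mu|\nu)}$ for every direction $\alpha\in\Sigma_\mu[\cP(M)]$ arising from such a potential, whence, using the $\lambda$-convexity (for some $\lambda\in\R$) of $H_m(\cdot|\nu)$ on the compact $M$ which underlies the machinery of Subsection~\ref{ssc:gf2}, the metric slope is controlled by these directional derivatives up to an error $O(W_2(\mu,\tilde\mu))$ as $\tilde\mu\to\mu$, so that $|\grad[H_m(\cdot|\nu)]|(\mu)\le\sqrt{I_m(\mu|\nu)}$. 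The reverse inequality follows by taking $\varphi$ with $\nabla\varphi=-\nabla\big((\rho^{m-1}-\sigma^{m-1})/(m-1)\big)$: the formula then evaluates to $-I_m(\mu|\nu)$ while $\|\nabla\varphi\|_{L^2(\mu)}=\sqrt{I_m(\mu|\nu)}$, so the associated curve realizes the slope. Finally, Lemma~\ref{lm:gv} provides a \emph{unique} minimizing direction, which must therefore coincide with this one; this identifies $\grad[H_m(\cdot|\nu)](\mu)$ as claimed.

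The main obstacle will be the rigorous passage to the limit under the integral sign in the derivative formula: because $\varphi$ is only locally semi-convex and $\rho$ only Lipschitz, one must justify dominated convergence for the difference quotients of $(\bJ^\omega_t)^{1-m}$ and of $\sigma(\cT_t)^{m-1}$ and check that the boundary-free integration by parts is legitimate, using compactness of $M$, boundedness of $\nabla\varphi$, and the Alexandrov second differentiability of $\varphi$ together with the Jacobian equation of Theorem~\ref{th:MA} (and, for $m<1$, the finiteness of $I_m(\mu|\nu)$ to handle the possible blow-up of $\nabla(\rho^{m-1})$ where $\rho$ vanishes). A secondary subtlety is the reduction, in the upper bound for the slope, of the metric slope to directional derivatives along $(d^2/2)$-convex transports: this rests on the fact that every measure near $\mu$ is the endpoint of such a geodesic and on the $\lambda$-convexity of $H_m(\cdot|\nu)$ on the compact $M$.
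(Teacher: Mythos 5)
Your computation of the displacement derivative is essentially the paper's own argument: the paper invokes the calculation that produced \eqref{eq:gf2} in the proof of Theorem~\ref{th:gf} (Jacobian equation, $\lim_{t\to 0}(\bJ^{\omega}_t-1)/t=\Delta^{\omega}\varphi$ at points of twice differentiability of $\varphi$, then integration by parts using $\tfrac1m\nabla(\rho^m)=\tfrac{1}{m-1}\rho\nabla(\rho^{m-1})$), which is exactly your first paragraph, so that part matches.

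The genuine gap is in your passage from the derivative formula to $|\grad[H_m(\cdot|\nu)]|(\mu)\le\sqrt{I_m(\mu|\nu)}$. You control the metric slope by directional derivatives by invoking ``the $\lambda$-convexity (for some $\lambda\in\R$) of $H_m(\cdot|\nu)$ on the compact $M$''. No such $\lambda$ exists in general: by Theorem~\ref{th:mCD}, displacement $\lambda$-convexity of $H_m(\cdot|\nu)$ for \emph{any} finite $\lambda$ forces $\Ric_N\ge 0$ in addition to $\Hess\Psi\ge\lambda$ (in the proof of (B)$\Rightarrow$(A) the internal-energy perturbation dominates the $\lambda W_2^2$ term as $\ve\to 0$, so the curvature conclusion is independent of $\lambda$), and neither hypothesis appears in Proposition~\ref{pr:dHm}; on a compact manifold with somewhere negative $\Ric_N$ the entropy is not semi-convex at all, unlike the classical $m=1$ case. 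So the step ``slope is controlled by directional derivatives up to $O(W_2)$'' is unsupported as written. The curvature-free route, which is what the paper's terse ``as any geodesic is realized in this way'' is leaning on, is the pointwise convexity of $s\mapsto s^m/(m-1)$ as in the proof of Theorem~\ref{th:LS}: along the McCann geodesic from $\mu$ to a nearby $\tilde\mu$ one has
\begin{equation*}
H_m(\mu|\nu)-H_m(\tilde{\mu}|\nu)\ \le\ \frac{1}{m-1}\int_M \big\{ (\rho^{m-1}-\sigma^{m-1}) -(\rho^{m-1}-\sigma^{m-1})(\cT_1) \big\} \,d\mu,
\end{equation*}
which is then compared with $W_2(\mu,\tilde{\mu})$ via Cauchy--Schwarz; no displacement convexity of the entropy is needed. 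A secondary, fixable point: a $(d^2/2)$-convex $\varphi$ with $\nabla\varphi=-\nabla\big((\rho^{m-1}-\sigma^{m-1})/(m-1)\big)$ need not exist outright, so for the lower bound on the slope you should approximate $(\rho^{m-1}-\sigma^{m-1})/(m-1)$ by smooth functions and rescale by a small $\ve$ (as the paper does explicitly in the Finsler analogue, Proposition~\ref{pr:Fgf}), which is enough because the derivative formula applies to every $(d^2/2)$-convex potential.
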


\begin{proof}
Recall that $\varphi$ is twice differentiable a.e., and that $\mu_t$ is
absolutely continuous for $t<1$ (\cite[Theorem~8.7]{Vi2}).
Using the calculation deriving $(\ref{eq:gf2})$, we obtain
\begin{align*}
&\lim_{t \to 0}\frac{H_m(\mu|\nu)-H_m(\mu_t|\nu)}{t} \\
&= \int_M \bigg\{ \frac{1}{m}\rho^{m-1}\Delta^{\omega}\varphi
 +\frac{1}{m-1}\langle \nabla\varphi,\nabla(\sigma^{m-1}) \rangle  \bigg\} \,d\mu \\
&= -\int_M \bigg\{ \frac{1}{m}\langle \nabla(\rho^m),\nabla\varphi \rangle
 -\frac{\rho}{m-1}\langle \nabla\varphi,\nabla(\sigma^{m-1}) \rangle  \bigg\}
 \,d\omega \\
&= -\frac{1}{m-1} \int_M \langle \nabla(\rho^{m-1}-\sigma^{m-1}),
 \nabla\varphi \rangle \,d\mu.
\end{align*}
As any geodesic with respect to $W_2$ is realized in this way (Theorem~\ref{th:Mc}),
we have $|\grad[H_m(\cdot|\nu)]|(\mu)=\sqrt{I_m(\mu|\nu)}$ and,
if $|\grad[H_m(\cdot|\nu)]|(\mu)<\infty$,
\[ \grad[H_m(\cdot|\nu)](\mu)
 =-\nabla \bigg( \frac{\rho^{m-1}-\sigma^{m-1}}{m-1} \bigg). \]
$\qedd$
\end{proof}

\begin{remark}\label{rm:pme}
The family of $m$-Gaussian measures (Example~\ref{ex:Nm})
is closely related to the Barenblatt solution to $(\ref{eq:pme})$ (without drift),
and again has a role to play here.
On the unweighted Euclidean space $(\R^n,dx)$, it is known by \cite[Proposition~5]{OW}
that a solution to \eqref{eq:pme} starting from an $m$-Gaussian measure
will keep being $m$-Gaussian.
An explicit expression of such solutions is given in \cite{Ta2}.
\end{remark}

\subsection{Proof of Claim~\ref{cl:gf}}

(i) The existence follows from, as usual, the compactness of $\cP(M)$
and the lower semi-continuity of $H_m(\cdot|\nu)$ (Lemma~\ref{lm:lsc}).
The absolute continuity is obvious for $m>1$.

For $m<1$, decompose $\mu^{\delta}$ into absolutely continuous and singular
parts $\mu^{\delta}=\rho\omega+\mu^s$ and suppose $\mu^s(M)>0$.
We modify $\mu^{\delta}$ into $\hat{\mu}_r \in \cP_{\ac}(M,\omega)$ as
\[ d\hat{\mu}_r(x)=\hat{\rho}_r(x) \,d\omega(x)
 :=\bigg\{ \rho(x) +\int_M \frac{\chi_{B(y,r)}(x)}{\omega(B(y,r))}
 \,d\mu^s(y) \bigg\} d\omega(x) \]
for small $r>0$.
Then we find
\begin{align*}
\int_M \sigma^{m-1} \,d\hat{\mu}_r
&\le \int_M \sigma^{m-1} \,d\mu^{\delta} +\int_M \bigg|
 \sigma(y)^{m-1}-\frac{1}{\omega(B(y,r))}\int_{B(y,r)} \sigma^{m-1} \,d\omega \bigg|
 d\mu^s(y) \\
&\le \int_M \sigma^{m-1} \,d\mu^{\delta}
 +\Big\{ \sup_M |\nabla(\sigma^{m-1})| \cdot r \Big\} \mu^s(M).
\end{align*}
Given an optimal coupling $\pi=\pi_1+\pi_2$ of $\mu^{\delta}$ and $\mu_t$
such that $(p_1)_{\sharp}\pi_1=\rho\omega$ and $(p_1)_{\sharp}\pi_2=\mu^s$,
\[ d\hat{\pi}_r(x,z):=d\pi_1(x,z)
 +\int_{y\in M} \frac{\chi_{B(y,r)}(x)}{\omega(B(y,r))} \,d\omega(x) d\pi_2(y,z) \]
is a coupling of $\hat{\mu}_r$ and $\mu_t$.
Hence we observe
\begin{align*}
W_2(\hat{\mu}_r,\mu_t)^2
&\le \int_{M \times M} d(x,z)^2 d\pi_1(x,z)
 +\int_{M \times M} \{ d(y,z)+r \}^2 \,d\pi_2(y,z) \\
&\le \int_{M \times M} d(x,z)^2 d\pi(x,z)
 +\{ 2\diam M +r \} r\pi_2(M \times M) \\
&\le W_2(\mu^{\delta},\mu_t)^2 +\{ 3\diam M \cdot r \} \mu^s(M).
\end{align*}
Finally, it follows from the H\"older inequality that
\begin{align*}
&\int_M \hat{\rho}_r^m \,d\omega
 = \int_M \bigg[ \int_M \bigg\{ \frac{\rho(x)}{\mu^s(M)}
 +\frac{\chi_{B(y,r)}(x)}{\omega(B(y,r))} \bigg\} \,d\mu^s(y) \bigg]^m d\omega(x) \\
&\ge \mu^s(M)^{m-1} \int_M \bigg[ \int_M \bigg\{ \frac{\rho(x)}{\mu^s(M)}
 +\frac{\chi_{B(y,r)}(x)}{\omega(B(y,r))} \bigg\}^m d\mu^s(y) \bigg] d\omega(x) \\
&\ge \mu^s(M)^{m-1} \int_M \bigg\{ \int_{M \setminus B(y,r)}
 \frac{\rho^m}{\mu^s(M)^m} \,d\omega +\int_{B(y,r)} \frac{1}{\omega(B(y,r))^m}
 \,d\omega \bigg\} d\mu^s(y) \\
&= \int_M \rho^m \,d\omega -\mu^s(M)^{-1} \int_M
 \bigg( \int_{B(y,r)} \rho^m \,d\omega \bigg) d\mu^s(y) \\
&\quad +\mu^s(M)^{m-1} \int_M \omega\big( B(y,r) \big)^{1-m} \,d\mu^s(y).
\end{align*}
As $M$ is compact, we find
\[ \mu^s(M)^{m-1} \int_M \omega\big( B(y,r) \big)^{1-m} \,d\mu^s(y)
 \ge \mu^s(M)^m C_1(\omega,m)r^{n(1-m)}, \]
and, for all $y \in \supp\mu^s$,
\begin{align*}
\int_{B(y,r)} \rho^m \,d\omega
&= \int_{B(y,r)} (\rho \sigma^{m-1})^m \sigma^{m(1-m)} \,d\omega \\
&\le \bigg( \int_{B(y,r)} \rho \sigma^{m-1} \,d\omega \bigg)^m
 \bigg( \int_{B(y,r)} \sigma^m \,d\omega \bigg)^{1-m} \\
&\le \bigg( \int_{B(y,r)} \rho \sigma^{m-1} \,d\omega \bigg)^m
 C_2(\omega,\sigma,m)r^{n(1-m)}.
\end{align*}
Since $\lim_{r \to 0} \sup_{y \in M} \int_{B(y,r)}\rho \sigma^{m-1} \,d\omega=0$,
these imply for small $r>0$
\[ \int_M \hat{\rho}_r^m \,d\omega
 \ge \int_M \rho^m \,d\omega +\frac{1}{2}C_1(\omega,m)\mu^s(M)^m r^{n(1-m)}. \]

Combining these, we conclude that
\begin{align*}
&H_m(\hat{\mu}_r|\nu) +\frac{W_2(\hat{\mu}_r,\mu_t)^2}{2\delta}
 -H_m(\mu^{\delta}|\nu) -\frac{W_2(\mu^{\delta},\mu_t)^2}{2\delta} \\
&\le -C_3(\omega,m) \mu^s(M)^m r^{n(1-m)}+C_4(M,\sigma,m,\delta)\mu^s(M)r,
\end{align*}
where $C_3,C_4>0$.
Then $n(1-m)<1$ and $\mu^s(M)>0$ yield that
\[ H_m(\hat{\mu}_r|\nu) +\frac{W_2(\hat{\mu}_r,\mu_t)^2}{2\delta}
 <H_m(\mu^{\delta}|\nu) +\frac{W_2(\mu^{\delta},\mu_t)^2}{2\delta} \]
holds for small $r>0$.
This contradicts the choice of $\mu^{\delta}$,
therefore we obtain $\mu^s(M)=0$.

(ii) By the choice of $\mu^{\delta}$, we have
\[ H_m(\mu^{\delta}|\nu) +\frac{W_2(\mu^{\delta},\mu_t)^2}{2\delta}
 \le H_m(\mu_t|\nu) \]
which immediately implies
$\lim_{\delta \to 0}W_2(\mu^{\delta},\mu_t)^2
 \le \lim_{\delta \to 0} 2\delta H_m(\mu_t|\nu)=0$.
Thus $\mu^{\delta}$ converges to $\mu_t$ weakly, and hence
\[ \limsup_{\delta \to 0} \frac{W_2(\mu^{\delta},\mu_t)^2}{2\delta}
\le H_m(\mu_t|\nu) -\liminf_{\delta \to 0} H_m(\mu^{\delta}|\nu) \le 0 \]
by the lower semi-continuity (Lemma~\ref{lm:lsc}).
This further yields
\[ H_m(\mu_t|\nu) \le \liminf_{\delta \to 0} H_m(\mu^{\delta}|\nu)
 \le \limsup_{\delta \to 0} H_m(\mu^{\delta}|\nu) \le H_m(\mu_t|\nu), \]
where the last inequality follows again from the choice of $\mu^{\delta}$.

(iii) This is a consequence of the following general lemma.
$\hfill \diamondsuit$

\begin{lemma}\label{lm:gf}
Assume $m \in [(n-1)/n,1) \cup (1,2]$ and that a sequence
$\{ \mu_i \}_{i \in \N} \subset \cP_{\ac}(M,\omega)$ converges to
$\mu \in \cP_{\ac}(M,\omega)$ weakly as well as
$\lim_{i \to \infty}H_m(\mu_i|\nu) =H_m(\mu|\nu) <\infty$.
Then, by setting $\mu_i=\rho_i \omega$ and $\mu=\rho \omega$,
$\rho_i^m$ converges to $\rho^m$ in $L^1(M,\omega)$.
\end{lemma}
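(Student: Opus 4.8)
The plan is to reduce the lemma to the single scalar limit $\int_M\rho_i^m\,d\omega\to\int_M\rho^m\,d\omega$, and then to upgrade this to $L^1$-convergence of $\rho_i^m$ using the weak convergence $\mu_i\to\mu$ and the strict convexity of $t\mapsto t^m$. For the reduction: since $M$ is compact and $\Psi$ is Lipschitz, $\sigma^{m-1}$ is a bounded continuous function on $M$ (explicitly $\sigma^{m-1}=1+(1-m)\Psi$ for $m<1$ and $\sigma^{m-1}=(1-(m-1)\Psi)_+$ for $m>1$) and $\int_M\sigma^m\,d\omega<\infty$. Writing \eqref{eq:Hm} for the absolutely continuous measure $\mu=\rho\omega$ (its singular term vanishes) as
\[ \frac{1}{m(m-1)}\int_M\rho_i^m\,d\omega=H_m(\mu_i|\nu)-\frac{1}{m}\int_M\sigma^m\,d\omega+\frac{1}{m-1}\int_M\sigma^{m-1}\,d\mu_i, \]
the right-hand side converges ($H_m(\mu_i|\nu)\to H_m(\mu|\nu)$ by hypothesis, the second term is a finite constant, and $\int_M\sigma^{m-1}\,d\mu_i\to\int_M\sigma^{m-1}\,d\mu$ because $\sigma^{m-1}\in C(M)$ and $\mu_i\to\mu$ weakly), which gives the claimed limit.

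For $m>1$ the conclusion then follows quickly. The sequence $\{\rho_i\}$ is bounded in the reflexive space $L^m(M,\omega)$, so passing to a subsequence $\rho_i\rightharpoonup h$ weakly in $L^m$; testing against $C(M)\subset L^{m/(m-1)}(M,\omega)$ and using $\mu_i\to\mu$ weakly shows $h=\rho$, hence $\rho_i\rightharpoonup\rho$ weakly in $L^m$. Combined with $\|\rho_i\|_{L^m}\to\|\rho\|_{L^m}$ and the uniform convexity of $L^m$ ($1<m<\infty$), this yields $\rho_i\to\rho$ strongly in $L^m$, and then $|a^m-b^m|\le m(a^{m-1}+b^{m-1})|a-b|$ together with H\"older's inequality gives $\|\rho_i^m-\rho^m\|_{L^1}\le m(\|\rho_i\|_{L^m}^{m-1}+\|\rho\|_{L^m}^{m-1})\|\rho_i-\rho\|_{L^m}\to0$.

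The case $m<1$ is the substantive one and will be the main obstacle. Here $t\mapsto t^m$ is concave but uniformly continuous on $[0,\infty)$ (indeed $|a^m-b^m|\le|a-b|^m$), so it suffices to prove $\rho_i\to\rho$ in $\omega$-measure: then $\rho_i^m\to\rho^m$ in measure and, since $\rho_i^m\ge0$ with $\int_M\rho_i^m\,d\omega\to\int_M\rho^m\,d\omega<\infty$, Scheff\'e's lemma (applied along subsequences) gives $\rho_i^m\to\rho^m$ in $L^1(M,\omega)$. To obtain convergence in measure I would run a strict-concavity argument: for a \emph{continuous} $g>0$ the tangent-line bound $\rho_i^m\le g^m+mg^{m-1}(\rho_i-g)$ makes the nonnegative concavity gaps $k_i^g:=g^m+mg^{m-1}(\rho_i-g)-\rho_i^m$ satisfy $\int_M k_i^g\,d\omega\to\Delta(g):=\int_M\{g^m+mg^{m-1}(\rho-g)-\rho^m\}\,d\omega\ge0$ (using the reduction and $g^{m-1}\in C(M)$); choosing $g=g_L$ a continuous approximation of $\rho$ with $\|g_L-\rho\|_{L^1}\to0$ and $g_L\ge1/L$ one checks $\Delta(g_L)\to0$, whereas if $\rho_i\not\to\rho$ in measure then on a region $\{1/C\le\rho,g_L\le C\}$ of almost full $\omega$-mass the gap $k_i^{g_L}$ is bounded below wherever $|\rho_i-\rho|\ge\varepsilon$ (by strict concavity when $\rho_i$ is moderate, and because $m<1$ makes the linear term beat $\rho_i^m$ when $\rho_i$ is large), so $\limsup_i\int_M k_i^{g_L}\,d\omega$ stays positive, contradicting $\Delta(g_L)\to0$. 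The delicate point throughout is that $(t^m)'=mt^{m-1}$ blows up at $0$ while $\rho$ is only integrable (neither continuous nor bounded below), so $\mu_i\to\mu$ cannot be tested against $\rho^{m-1}$ and $\{\rho_i\}$ need not be uniformly integrable; making the truncation at height $1/L$ interact correctly with the approximation $g_L$ — so that the region where $\rho$ is small (controlled only through $\int_{\{\rho_i<1/L\}}\rho_i^m\,d\omega\le L^{-m}\omega(M)$) contributes negligibly — is where essentially all the work sits, everything else being routine. This is presumably also where the paper's proof appeals to a general convexity/lower-semicontinuity lemma in the spirit of Lemma~\ref{lm:lsc}.
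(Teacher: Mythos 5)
Your reduction to the scalar limit $\lim_i\int_M\rho_i^m\,d\omega=\int_M\rho^m\,d\omega$ (testing the weak convergence against the bounded continuous function $\sigma^{m-1}$ and using the hypothesis on $H_m$) is exactly how the paper also starts, and your treatment of $m>1$ — boundedness in $L^m$, identification of the weak limit through $C(M)$, norm convergence plus uniform convexity to get strong $L^m$ convergence, then $|a^m-b^m|\le m(a^{m-1}+b^{m-1})|a-b|$ and H\"older — is correct, complete, and genuinely different from the paper, which handles both ranges of $m$ at once through a single claim about truncations. So that half stands.

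The gap is in the $m<1$ case, which is the substantive one. First, the step ``on a region $\{1/C\le\rho,\,g_L\le C\}$ of almost full $\omega$-mass'' is not available: $\rho$ is only a probability density and may vanish, or be arbitrarily small, on a set of large $\omega$-measure, so $\{\rho\ge 1/C\}$ need not carry almost all the mass; the defect set $\{|\rho_i-\rho|\ge\ve\}$ could lie entirely in $\{\rho<1/C\}$, where your stated lower bound on the concavity gap never engages. (The lower truncation of $\rho$ is in fact the wrong move: where $g_L$ is small and $\rho_i$ is not, the term $mg_L^{m-1}\rho_i$ blows up, so the gap is large there; what one needs is only an upper truncation $\rho\le C$, which does capture almost full mass.) Second, the two quantitative facts your contradiction rests on — that $g_L$ can be chosen continuous with the floor $g_L\ge 1/L$ and still $\Delta(g_L)\to0$ (this needs a Lusin-type construction making the contribution $L^{1-m}\int_{\mathrm{bad}}\rho\,d\omega$ negligible), and that the gap lower bound can be made uniform enough in $L$ to beat $\Delta(g_L)$ — are precisely the steps you leave unproven and yourself describe as where all the work sits. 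As written, the $m<1$ half is therefore an unfinished sketch containing one incorrectly stated step, though the strategy is repairable along the lines just indicated. For comparison, the paper bypasses continuous approximations entirely: it proves that $\min\{\rho_i,C\}\to\min\{\rho,C\}$ in $L^2(M,\omega)$ by a midpoint argument — if this failed along a subsequence, the pointwise strict convexity inequality for $t^m/m(m-1)$, together with $\max\{\rho,\rho_{l_j}\}^{m-2}\ge C^{m-2}$ (this is where the restriction $m\le 2$ enters), would force $\limsup_jH_m(\bar{\mu}_j|\nu)<H_m(\mu|\nu)$ for the midpoint measures $\bar{\mu}_j=\{(\rho+\rho_{l_j})/2\}\omega$, contradicting the lower semi-continuity of Lemma~\ref{lm:lsc} — and then deduces the $L^1$ convergence of $\rho_i^m$ for both $m<1$ and $m\in(1,2]$ from this truncated $L^2$ convergence by elementary estimates.
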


\begin{proof}
Note that the convergence of $H_m(\mu_i|\nu)$ ensures
$\lim_{i \to \infty} \int_M \rho_i^m \,d\omega=\int_M \rho^m \,d\omega$.
We shall show the following:
\begin{itemize}
\item[$(*)$] For any constant $C>0$, it holds
$\lim_{i \to \infty}\|\min\{ \rho_i,C \} -\min\{ \rho,C \}\|_{L^2(M,\omega)}=0$.
\end{itemize}
Then we have, for $m<1$,
\[ \int_M |\rho_i^m -\rho^m| \,d\omega
 \le \int_M |\rho_i -\rho|^m  \,d\omega
 \le \omega(M)^{1-m} \bigg( \int_M |\rho_i -\rho| \,d\omega \bigg)^m, \]
and
\begin{align*}
&\int_M |\rho_i -\rho| \,d\omega \\
&\le \int_M \big[ |\min\{ \rho_i,C \} -\min\{ \rho,C \}|
 +\max\{ \rho_i-C,0 \} +\max\{ \rho-C,0 \} \big] \,d\omega \\
&\to 0
\end{align*}
as $i \to \infty$ and then $C \to \infty$.
Precisely,
\begin{align*}
\int_M \max\{ \rho_i -C,0 \} \,d\omega
&= \int_M (\rho_i -\min\{ \rho_i,C \}) \,d\omega
 \to 1-\int_M \min\{ \rho,C \} \,d\omega \quad (i \to \infty) \\
&\to 0 \quad (C \to \infty),
\end{align*}
where $(*)$ is used when taking the limit as $i \to \infty$.
For $m \in (1,2]$, we similarly find
\begin{align}
\int_M |\rho_i^m -\rho^m| \,d\omega
&\le m\int_M |\rho_i -\rho| \max\{\rho_i,\rho\}^{m-1} \,d\omega \nonumber\\
&\le m\bigg( \int_M |\rho_i -\rho|^m \,d\omega \bigg)^{1/m}
 \bigg(  \int_M (\rho_i +\rho)^m \,d\omega \bigg)^{(m-1)/m}, \label{eq:711}
\end{align}
and
\begin{align*}
&\int_M |\rho_i -\rho|^m \,d\omega \\
&\le 2^{m-1} \int_M \big[ |\min\{ \rho_i,C \} -\min\{ \rho,C \}|^m
 +\max\{ \rho_i-C,0 \}^m +\max\{ \rho-C,0 \}^m \big] \,d\omega \\
&\to 0
\end{align*}
as $i \to \infty$ and then $C \to \infty$.
Indeed,
\begin{align*}
\int_M \max\{ \rho_i -C,0 \}^m \,d\omega
&= \int_M (\rho_i -\min\{ \rho_i,C \})^m \,d\omega
 \le \int_M (\rho_i^m -\min\{ \rho_i,C \}^m) \,d\omega \\
&\to \int_M (\rho^m -\min\{ \rho,C \}^m) \,d\omega \quad (i \to \infty) \\
&\to 0 \quad (C \to \infty),
\end{align*}
where we used the calculation as in \eqref{eq:711} and $(*)$ to see
\[ \lim_{i \to \infty} \int_M |\min\{ \rho_i,C \}^m -\min\{ \rho,C \}^m| \,d\omega=0. \]

To show $(*)$, we suppose that it is false.
Then there are some constants $C,\ve>0$ and a sequence
$\{ l_j \}_{j \in \N} \subset \N$ going to infinity such that
\begin{equation}\label{eq:not*}
\| \min\{ \rho,C \} -\min\{ \rho_{l_j},C \} \|_{L^2(M,\omega)} \ge \ve
\end{equation}
for all $j \in \N$.
Note that, as $d^2[t^m/m(m-1)]/dt^2=t^{m-2}$,
\[ \frac{1}{m(m-1)} \bigg( \frac{\rho+\rho_{l_j}}{2} \bigg)^m \le
 \frac{\rho^m+\rho_{l_j}^m}{2m(m-1)}
 -\frac{\max\{ \rho,\rho_{l_j} \}^{m-2}}{8}|\rho-\rho_{l_j}|^2. \]
For the second term, we observe
\[ \max\{ \rho,\rho_{l_j} \}^{m-2} |\rho-\rho_{l_j}|^2
 \ge C^{m-2} |\min\{ \rho,C \} -\min\{ \rho_{l_j},C \}|^2. \]
This is clear if $\max\{ \rho,\rho_{l_j} \} \le C$ or
$\min\{ \rho,\rho_{l_j} \} \ge C$, and follows from
$\tau^{m-2}(\tau-\ve)^2 \ge (1-\ve)^2$ for $\tau \ge 1 \ge \ve$ otherwise.
Thus we obtain, by \eqref{eq:not*},
\begin{align*}
&\frac{1}{m(m-1)} \int_M
 \bigg( \frac{\rho+\rho_{l_j}}{2} \bigg)^m \,d\omega \\
&\le \int_M \frac{\rho^m+\rho_{l_j}^m}{2m(m-1)} \,d\omega
 -\frac{C^{m-2}}{8} \int_M |\min\{ \rho,C \} -\min\{ \rho_{l_j},C \}|^2
 \,d\omega \\
&\le \int_M \frac{\rho^m+\rho_{l_j}^m}{2m(m-1)} \,d\omega
 -\frac{C^{m-2}}{8} \ve^2.
\end{align*}
This means that $\bar{\mu}_j:=\{ (\rho+\rho_{l_j})/2 \} \omega$
satisfies
\[ \limsup_{j \to \infty} H_m(\bar{\mu}_j|\nu)
 \le \lim_{i \to \infty} H_m(\mu_i|\nu) -\frac{C^{m-2}}{8} \ve^2
 =H_m(\mu|\nu) -\frac{C^{m-2}}{8}\ve^2, \]
this contradicts the lower semi-continuity of $H_m(\cdot|\nu)$ (Lemma~\ref{lm:lsc}).
$\qedd$
\end{proof}

\section{Finsler case}\label{sc:Fins}

We finally stress that most results in this article are extended to Finsler manifolds,
according to the theory developed in \cite{Oint}, \cite{OS}
(see also a survey \cite{Osur}).
Briefly speaking, a Finsler manifold is a differentiable manifold equipped with
a (Minkowski) norm on each tangent space.
Restricting these norms to those coming from inner products,
we have the family of  Riemannian manifolds as a subclass.
We refer to \cite{BCS}, \cite{Sh} for the basics of Finsler geometry,
and to \cite{Oint}, \cite{OS}, \cite{Osur} for the details omitted
in the following discussion.

A {\it Finsler manifold} $(M,F)$ will be a pair of an $n$-dimensional
$C^{\infty}$-manifold $M$ and a $C^{\infty}$-Finsler structure $F:TM \lra [0,\infty)$
satisfying the following {\it regularity}, {\it positive homogeneity},
and {\it strong convexity} conditions:
\begin{enumerate}[(1)]
\item $F$ is $C^{\infty}$ on $TM \setminus 0$, where $0$ stands for
the zero section;
\item $F(\lambda v)=\lambda F(v)$ holds for all $v \in TM$ and $\lambda \ge 0$;
\item In any local coordinate system $(x^i)_{i=1}^n$ of an open set $U \subset M$ and
the corresponding coordinate $v=\sum_i v^i(\del/\del x^i)|_x$ of $T_xM$
with $x \in U$, the $n \times n$-matrix
\[ \bigg( \frac{\del^2(F^2)}{\del v^i \del v^j}(v) \bigg)_{i,j=1}^n \]
is positive-definite for all $v \in T_xM \setminus 0$ and $x \in U$.
\end{enumerate}
Then the distance $d$, geodesics and the exponential map are defined
in the same manner as Riemannian geometry, whereas $d$ is typically nonsymmetric
(and not a distance in the precise sense) since $F$ is merely positively homogeneous.
Nonetheless, $d$ satisfies the positivity and the triangle inequality.

On a Finsler manifold $(M,F)$, there is no constructive measure as good as
the Riemannian volume measure in the Riemannian case (cf.\ \cite{Odga}),
but we can consider an arbitrary positive $C^{\infty}$-measure $\omega$ on $M$
and associate it with the weighted Ricci curvature $\Ric_N$ (\cite{Oint}).
This curvature turns out  extremely useful, and the argument in \cite{Oint}
is applicable to generalizing the whole results in Sections~\ref{sc:dc}--\ref{sc:conc}
to the Finsler setting.
(We need a little trick only in Proposition~\ref{pr:Poin},
put $\mu=(1-\ve f)\sigma\omega$ when $m<1$ to have
$\nabla[((1-\ve f)^{m-1}-1)\sigma^{m-1}]=\nabla[(1-m)f\ve\sigma^{m-1}]
=(1-m)\ve \nabla(f\sigma^{m-1})$.)

\begin{theorem}\label{th:Fins}
Let $(M,F)$ be a forward complete, connected Finsler manifold and $\omega$ be
a positive $C^{\infty}$-measure on $M$.
Then the following results in this article hold true also for $(M,F,\omega)$
$($with appropriate interpretations for the nonsymmetric distance, cf.\
{\rm \cite{Oint}):}
\begin{itemize}
\item Equivalence between the convexity of $H_m(\cdot|\nu)$ and a curvature bound
 $($Theorem~$\ref{th:mCD});$
\item Functional inequalities
 $($Propositions~$\ref{pr:Ta}$, $\ref{pr:Poin}$, Theorem~$\ref{th:LS});$
\item Concentration of measures
 $($Theorem~$\ref{th:conc}$, Corollary~$\ref{cr:conc}$, Proposition~$\ref{pr:conc})$.
\end{itemize}
\end{theorem}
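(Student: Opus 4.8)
The plan is to verify that every ingredient entering the proofs in Sections~\ref{sc:dc}--\ref{sc:conc} has a Finsler counterpart, so that those arguments transfer almost verbatim. The Riemannian-specific tools used there are: (i) the existence of an optimal transport map of the form $\cT_t(x)=\exp_x(t\nabla\varphi(x))$ driven by a $d^2/2$-convex potential (Theorems~\ref{th:Mc}, \ref{th:FG}) together with the Jacobian equation (Theorem~\ref{th:MA}); (ii) the convexity in $t$ of $\bJ^{\omega}_t(x)^{1/N}$ up to sign under $\Ric_N\ge 0$ (Claim~\ref{cl:Jt}); (iii) the Bishop-type comparison $\area_\omega(S(x_0,r))\le r^{N-1}\area_\omega(S(x_0,1))$ used in Lemma~\ref{lm:K>0}; (iv) the first variation formula for $d$; and (v) elementary convexity of $e_m$, of $t\mapsto t^m$, etc. Items (i)--(iv) are available for a forward complete Finsler manifold $(M,F)$ equipped with an arbitrary smooth positive measure $\omega$ and its weighted Ricci curvature $\Ric_N$, thanks to \cite{Oint} and \cite{OS} (see also \cite{Osur}); item (v) is dimension-free. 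The one genuinely new feature is the non-symmetry of $d$, which forces us to fix orientations of transports and, where a reverse estimate is needed, to read $\exp$, $\nabla$ and the curvature with respect to the reversed structure $\overleftarrow{F}(v):=F(-v)$; with these conventions the proofs go through.

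For Theorem~\ref{th:mCD}, the direction (A)$\Rightarrow$(C) follows exactly as in the Riemannian case: the Finsler analogue of Theorem~\ref{th:FG} (\cite{Oint}) produces the unique transport $\mu_t=(\cT_t)_\sharp\mu_0$ supported, $\mu_0$-a.e., on a minimal geodesic inside $\overline{M_0}$; Claim~\ref{cl:Jt} is the main computation of \cite[Section~8.2]{Oint}, whose modification we already carried out, valid for $N\ge n$ when $m<1$ and, by the same use of the H\"older inequality $(a+b)^{1/N}(c+d)^{(N-1)/N}\le a^{1/N}c^{(N-1)/N}+b^{1/N}d^{(N-1)/N}$, for $N<0$ when $m>1$; the estimate of $\sigma(\cT_t)^{m-1}/(1-m)$ from $\Hess\Psi\ge K$ and the splitting $\pi=\pi_{aa}+\pi_{as}+\pi_{sa}+\pi_{ss}$ are measure-theoretic and unchanged. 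For (B)$\Rightarrow$(A) one again tests \eqref{eq:mCD} with the bump measures \eqref{eq:mu0}, \eqref{eq:mu1}; here the second-order behaviour of $d$ is governed by the flag curvature rather than the sectional curvature (cf.\ \cite{Oint}), so ``$\Ric(v)=\sum_i k_i$'' becomes the sum of flag curvatures with flagpole $\dot\gamma(0)$, and one must keep track of which marginal is the source of the transport because of asymmetry; the algebra leading to \eqref{eq:Rica} (including the inequality opposite to \eqref{eq:DC+} when $m>1$, now a reverse Jacobian estimate read through $\overleftarrow{F}$) and to $\Hess\Psi\ge K$ is then identical, and optimising over $a$ gives $\Ric_N(v)\ge 0$ verbatim.

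The functional inequalities of Section~\ref{sc:func} follow purely formally from \eqref{eq:mCD}, using only that $(\cP^2(M),W_2)$ is a genuine metric space (the induced $W_2$ is symmetric even though $d$ is not) and that $I_m(\mu|\nu)$ is defined through $F(\nabla(\rho^{m-1}-\sigma^{m-1}))$: Propositions~\ref{pr:Ta}, \ref{pr:Poin} and Theorem~\ref{th:LS} carry over line by line, with the single caveat already noted that in Proposition~\ref{pr:Poin} for $m<1$ one perturbs by $\mu=(1-\ve f)\sigma\omega$ rather than $(1+\ve f)\sigma\omega$, so that $\nabla[((1-\ve f)^{m-1}-1)\sigma^{m-1}]=(1-m)\ve\,\nabla(f\sigma^{m-1})$ and the nonlinearity of the Legendre transform does not interfere. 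Likewise Lemma~\ref{lm:K>0} transfers, since its proof uses only \eqref{eq:K>0}, convexity of $M_0$, and the comparison (iii), all in \cite{Oint}; hence the concentration estimates of Section~\ref{sc:conc}---Theorem~\ref{th:conc}, Corollary~\ref{cr:conc}, Proposition~\ref{pr:conc}---which combine Proposition~\ref{pr:Ta} with the H\"older inequality and Lemma~\ref{lm:conc}, hold unchanged.

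The main obstacle is the bookkeeping forced by the non-symmetry of $d$: one must commit to conventions for $W_2$ and for the direction of each transport, and in the handful of places where a reverse Jacobian, distance, or volume estimate is invoked one should systematically use $\overleftarrow{F}$; because $\Ric_N$ and its Bishop-type consequences are insensitive to this reversal (\cite{Oint}), no essential difficulty arises, but the argument must be phrased carefully. We stress that the gradient-flow results of Section~\ref{sc:gf} are \emph{not} asserted: the metric-geometric machinery of \cite{Ogra}, \cite{GO} relies on the Riemannian-like infinitesimal structure of $(\cP(M),W_2)$ (Theorem~\ref{th:angle}), which genuinely fails over a non-Riemannian Finsler base, so those results would require the separate treatment of \cite{OS} and are therefore excluded from Theorem~\ref{th:Fins}.
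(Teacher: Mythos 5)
Your proposal is correct and follows essentially the same route as the paper, which likewise proves Theorem~\ref{th:Fins} by observing that every ingredient of Sections~\ref{sc:dc}--\ref{sc:conc} (the transport map and Jacobian equation, the $\Ric_N$-based convexity of Claim~\ref{cl:Jt}, the comparison estimates behind Lemma~\ref{lm:K>0}) has a Finsler counterpart in \cite{Oint}, \cite{OS}, with the only genuine adjustment being the sign trick $\mu=(1-\ve f)\sigma\omega$ in Proposition~\ref{pr:Poin} forced by the nonlinearity (positive homogeneity) of the Finsler gradient. Your exclusion of the Section~\ref{sc:gf} gradient-flow results for lack of an analogue of Theorem~\ref{th:angle} also matches the paper, which treats that part separately via the Finsler structure of the Wasserstein space.
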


As for Section~\ref{sc:gf}, due to the lack of the analogue of
Theorem~\ref{th:angle}, we can not directly follow the Riemannian argument.
Nonetheless, we can apply the discussion in \cite{OS} using a (formal)
Finsler structure of the Wasserstein space, and obtain results corresponding
to Theorem~\ref{th:gf} and Proposition~\ref{pr:dHm}.
The point is the usage of the structure of the underlying space $M$,
while we did not explicitly use it in Subsections~\ref{ssc:gf1}, \ref{ssc:gf2}.
See \cite[Sections~6, 7]{OS} for further details.
We remark that, however, the $K$-contraction property \eqref{eq:cont}
essentially depends on the Riemannian structure and can not be expected
in the Finsler setting (cf.\ \cite{OS2}).

Let $(M,F)$ be compact from now on.
Due to Otto's idea \cite[Section~4]{Ot}, we introduce a Finsler structure
of $(\cP(M),W_2)$ as follows.
Given $\mu \in \cP(M)$, we define the {\it tangent space}
$(T_{\mu}\cP, F_W(\mu,\cdot))$ at $\mu$ by
\begin{align*}
F_W(\mu,\nabla\varphi) &:=\bigg( \int_M F(\nabla\varphi)^2 \,d\mu \bigg)^{1/2}
 \quad {\rm for}\ \varphi \in C^{\infty}(M), \\
T_{\mu}\cP &:=\Big( \overline{\{ \nabla\varphi \,|\, \varphi \in C^{\infty}(M) \}},
 F_W(\mu,\cdot) \Big),
\end{align*}
where the {\it gradient vector} $\nabla\varphi(x) \in T_xM$ is the Legendre transform
of the derivative $D\varphi(x) \in T^*_xM$,
and the closure was taken with respect to $F_W(\mu,\cdot)$.
We remark that the gradient $\nabla$ is a nonlinear operator
(i.e., $\nabla(\varphi_1+\varphi_2)(x) \neq \nabla\varphi_1(x)+\nabla\varphi_2(x)$
and $\nabla(-\varphi)(x) \neq -\nabla\varphi(x)$ in general),
since the Legendre transform is nonlinear unless $F|_{T_xM}$ is Riemannian.

Now, we take a locally Lipschitz curve $(\mu_t)_{t \in I} \subset (\cP(M),W_2)$
on an open interval $I \subset \R$.
We can associate it with the {\it tangent vector field}
$\dot{\mu}_t=\Phi(t,\cdot) \in T_{\mu_t}\cP$, that is,
$\Phi$ is a Borel vector field on $I \times M$ with $\Phi(t,x) \in T_xM$
and $F(\Phi) \in L^{\infty}_{\loc}(I \times M,d\mu_tdt)$ satisfying
the {\it continuity equation} $\del \mu_t/\del t +\div(\Phi_t \mu_t)=0$
in the weak sense that
\begin{equation}\label{eq:ceq}
\int_I \int_M \bigg\{ \frac{\del \phi_t}{\del t}+D\phi_t(\Phi_t) \bigg\} \,d\mu_t dt=0
\end{equation}
holds for all $\phi \in C^{\infty}_c(I \times M)$
(\cite[Theorem~8.3.1]{AGS}, \cite[Theorem~7.3]{OS}).
Using these `differentiable' structures, we can consider gradient curves
in a way different from the `metric' approach in Section~\ref{sc:gf}.

\begin{definition}\label{df:Fgf}
Given a function $f:\cP(M) \lra (-\infty,\infty]$ and $\mu \in \cP(M)$
with $f(\mu)<\infty$, we say that $f$ is {\it differentiable} at $\mu$
if there is $\Phi \in T_{\mu}\cP$ such that
\[ \lim_{t \downarrow 0}\frac{f((\cT_t)_{\sharp}\mu)-f(\mu)}{t}
 =\int_M \cL(\Phi)(\nabla\varphi) \,d\mu \]
holds for all $\varphi \in C^{\infty}(M)$, where $\cT_t(x):=\exp_x(t\nabla\varphi)$
and $\cL:T_xM \lra T_x^*M$ stands for the Legendre transform.
Then we write $\nabla_W f(\mu)=\Phi$.
\end{definition}

Then a gradient curve should be a solution to
$\dot{\mu}_t=\nabla_W[-H_m(\cdot|\nu)](\mu_t)$.
We first show that $\nabla_W[-H_m(\cdot|\nu)](\mu_t)$ is described by
the Fisher information like Proposition~\ref{pr:dHm}.

\begin{proposition}\label{pr:Fgf}
Take $\mu=\rho\omega \in \cP_{\ac}(M,\omega)$ with $\rho^m \in H^1(M,\omega)$.
If $\rho^{m-1}-\sigma^{m-1} \not\in H^1(M,\mu)$,
then $-H_m(\cdot|\nu)$ is not differentiable at $\mu$.
If $\rho^{m-1}-\sigma^{m-1} \in H^1(M,\mu)$,
then $-H_m(\cdot|\nu)$ is differentiable at $\mu$ and we have
\[ \nabla_W[-H_m(\cdot|\nu)](\mu)
 =\nabla\bigg( \frac{\rho^{m-1}-\sigma^{m-1}}{1-m} \bigg) \in T_{\mu}\cP. \]
\end{proposition}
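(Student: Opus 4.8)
The plan is to transplant to the Finsler framework of \cite{OS} the one-sided derivative computation carried out in the proof of Theorem~\ref{th:gf} (the one producing \eqref{eq:gf2}), then to perform an integration by parts permitted by $\rho^m\in H^1(M,\omega)$, and thereby reduce the differentiability assertion to a boundedness question for a linear functional. First I would fix $\varphi\in C^\infty(M)$, set $\cT_t(x):=\exp_x(t\nabla\varphi(x))$ and $\mu_t:=(\cT_t)_\sharp\mu=\rho_t\omega$, and invoke the Finsler Jacobian equation $\rho_t(\cT_t)\bJ^\omega_t=\rho$ together with the expansion $\lim_{t\downarrow0}t^{-1}(\bJ^\omega_t-1)=\div_{\omega}(\nabla\varphi)$ of \cite{OS}. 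The same dominated convergence argument that produced \eqref{eq:gf2} — which here needs only the uniform bound $|\bJ^\omega_t-1|\le Ct$ (valid as $\varphi$ is smooth and $M$ compact), $\rho^{m-1}\in L^1(M,\mu)$ (equivalently $\rho^m\in L^1(M,\omega)$), and the Lipschitz continuity of $\sigma^{m-1}=1-(m-1)\Psi$ — would then give
\[ \lim_{t\downarrow0}\frac{H_m(\mu|\nu)-H_m(\mu_t|\nu)}{t}
 =\int_M\Big\{\frac{1}{m}\rho^{m-1}\div_{\omega}(\nabla\varphi)
 +\frac{1}{m-1}D(\sigma^{m-1})(\nabla\varphi)\Big\}\,d\mu, \]
and up to this point no regularity of $\rho$ beyond $\rho^m\in L^1(M,\omega)$ is used.

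Since $\rho^m\in H^1(M,\omega)$ and $d\mu=\rho\,d\omega$, I would integrate the first term by parts, $\int_M\rho^{m-1}\div_{\omega}(\nabla\varphi)\,d\mu=-\int_M D(\rho^m)(\nabla\varphi)\,d\omega$. Setting $h:=(\rho^{m-1}-\sigma^{m-1})/(1-m)$ and reading $D(\rho^{m-1})$ as $\tfrac{m-1}{m}\rho^{-1}D(\rho^m)$ (defined $\omega$-a.e., with the convention that it vanishes on $\{\rho=0\}$, a set on which $D(\rho^m)=0$ a.e.), one gets $\rho\,Dh=-\tfrac{1}{m}D(\rho^m)-\rho\,D\Psi\in L^1(M,\omega)$; since $D(\sigma^{m-1})=(1-m)D\Psi$, the displayed limit rewrites as $\lim_{t\downarrow0}t^{-1}\{H_m(\mu_t|\nu)-H_m(\mu|\nu)\}=-\int_M Dh(\nabla\varphi)\,d\mu$. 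Hence, with $f:=-H_m(\cdot|\nu)$ and $L(\varphi):=\lim_{t\downarrow0}t^{-1}\{f(\mu_t)-f(\mu)\}=\int_M Dh(\nabla\varphi)\,d\mu$, Definition~\ref{df:Fgf} says $f$ is differentiable at $\mu$ exactly when $\varphi\mapsto L(\varphi)$ admits a representation $L(\varphi)=\int_M\cL(\Phi)(\nabla\varphi)\,d\mu$ with $\Phi\in T_\mu\cP$, and any such $\Phi$ then satisfies $|L(\varphi)|\le F_W(\mu,\Phi)\,F_W(\mu,\nabla\varphi)$ by Cauchy--Schwarz together with $\cL(\Phi)(V)\le F(\Phi)F(V)$. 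If $\rho^{m-1}-\sigma^{m-1}\in H^1(M,\mu)$, then $h\in H^1(M,\mu)$ (because $\sigma^{m-1}$ is Lipschitz), so $\nabla h\in T_\mu\cP$ — smooth functions being dense in $H^1(M,\mu)$ and $\varphi\mapsto\nabla\varphi$ continuous into $L^2(M,\mu;TM)$, cf.\ \cite{OS} — and $\cL(\nabla h)=Dh$ by the definition of the gradient yields $\int_M\cL(\nabla h)(\nabla\varphi)\,d\mu=L(\varphi)$; thus $f$ is differentiable at $\mu$ with $\nabla_W f(\mu)=\nabla h=\nabla\big((\rho^{m-1}-\sigma^{m-1})/(1-m)\big)$.

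It remains to show that $\rho^{m-1}-\sigma^{m-1}\notin H^1(M,\mu)$ forces non-differentiability; since $\sigma^{m-1}$ is Lipschitz this hypothesis means $\int_M F(\nabla(\rho^{m-1}))^2\,d\mu=\infty$. I would choose Lipschitz truncations $g_k:=G_k(\rho^m)$ ($G_k$ Lipschitz, of a shape depending on whether $m<1$ or $m>1$) with $\nabla g_k=\chi_{S_k}\nabla(\rho^{m-1})$ for sets $S_k\uparrow\{\rho>0\}$, so that $g_k\in H^1(M,\omega)\cap H^1(M,\mu)$ and $\|\nabla g_k\|_{L^2(\mu)}^2=\int_{S_k}F(\nabla(\rho^{m-1}))^2\,d\mu\to\infty$. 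Supposing for contradiction that $f$ is differentiable at $\mu$ with $\nabla_W f(\mu)=\Phi$, I would approximate each $g_k$ by $C^\infty$ mollifications $\varphi_{k,j}$ whose gradients are uniformly bounded and converge a.e.\ to $\nabla g_k$, so that $\nabla\varphi_{k,j}\to\nabla g_k$ in both $L^2(M,\omega)$ and $L^2(M,\mu)$; letting $j\to\infty$ in $L(\varphi_{k,j})=\int_M\cL(\Phi)(\nabla\varphi_{k,j})\,d\mu$ — legitimate because $D(\rho^m)\in L^2(M,\omega)$, $\cL(\Phi)\in L^2(M,\mu;T^*M)$ and $D\Psi$ is bounded — gives $\int_M Dh(\nabla g_k)\,d\mu=\int_M\cL(\Phi)(\nabla g_k)\,d\mu$. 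Since $D(\rho^{m-1})(\nabla g_k)=F(\nabla g_k)^2$ on $S_k$ and $\nabla g_k$ vanishes off $S_k$, the left side equals $\tfrac{1}{1-m}\|\nabla g_k\|_{L^2(\mu)}^2-\int_M D\Psi(\nabla g_k)\,d\mu$, whence
\[ \frac{1}{|1-m|}\|\nabla g_k\|_{L^2(\mu)}^2-\mathrm{Lip}(\Psi)\,\|\nabla g_k\|_{L^2(\mu)}
 \le\Big|\int_M\cL(\Phi)(\nabla g_k)\,d\mu\Big|\le F_W(\mu,\Phi)\,\|\nabla g_k\|_{L^2(\mu)}, \]
which bounds $\|\nabla g_k\|_{L^2(\mu)}$ and contradicts $\|\nabla g_k\|_{L^2(\mu)}\to\infty$. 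I expect this last step to be the real obstacle: as $L$ is not $F_W(\mu,\cdot)$-continuous, the passage from the Lipschitz competitors $g_k$ to genuine $C^\infty$ test functions must be done carefully, exploiting the uniform gradient bound coming from the Lipschitz property of $g_k$ and the $L^2(M,\omega)$-continuity of $\varphi\mapsto\int_M D(\rho^m)(\nabla\varphi)\,d\omega$ — the same mechanism underlying the proof of Lemma~\ref{lm:gf}.
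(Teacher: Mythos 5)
Your proposal is correct and runs largely parallel to the paper's own proof: the same family $\mu_t=(\cT_t)_{\sharp}\mu$ for smooth $\varphi$ and small $t$, the Jacobian equation, the identification of the directional derivative with $\int_M D\big((\rho^{m-1}-\sigma^{m-1})/(1-m)\big)(\nabla\varphi)\,d\mu$, and the representation $\nabla_W[-H_m(\cdot|\nu)](\mu)=\nabla h$ once $h:=(\rho^{m-1}-\sigma^{m-1})/(1-m)\in H^1(M,\mu)$. Two points of divergence are worth recording. First, where $\rho^m\in H^1(M,\omega)$ is used: you expand $\bJ^{\omega}_t$ pointwise to produce $\div_{\omega}(\nabla\varphi)$ and then integrate by parts, whereas the paper rewrites $\int_M t^{-1}(\bJ^{\omega}_t-1)\rho^m\,d\omega$ by the change of variables as $\int_M t^{-1}\{\rho^m-\rho(\cT_t)^m\}\bJ^{\omega}_t\,d\omega$ and lets the difference quotient of $\rho^m$ along the flow carry the Sobolev regularity. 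The paper's maneuver deliberately avoids $\div_{\omega}(\nabla\varphi)$: in the Finsler setting $\nabla\varphi$ is only Lipschitz (the Legendre transform fails to be $C^1$ at zeros of $D\varphi$), so your expansion $t^{-1}(\bJ^{\omega}_t-1)\to\div_{\omega}(\nabla\varphi)$ holds only a.e.\ and the subsequent integration by parts against a Lipschitz vector field needs a word of justification; this is routine but it is precisely the technicality the paper's version bypasses, and the net identity is the same. Second, for the non-differentiability direction the paper merely observes that testing against smooth approximations of $\rho^{m-1}-\sigma^{m-1}$ (rescaled by $1/(1-m)$) makes the local slope infinite, which is incompatible with the bound by $F_W(\mu,\Phi)<\infty$ that differentiability would force; your truncation-plus-mollification argument with $g_k=G_k(\rho^m)$ is a fleshed-out, quantitative version of exactly this mechanism and closes correctly. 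Only two cosmetic remarks there: with a nonsymmetric $F$ the two-sided bound $|\cL(\Phi)(V)|\le F(\Phi)F(V)$ should carry the (finite, by compactness) reversibility constant, and the Lipschitzness of $\Psi$ you invoke is the same standing hypothesis the paper uses implicitly through $D(\sigma^{m-1})$; neither affects the contradiction.
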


\begin{proof}
Fix arbitrary $\varphi \in C^{\infty}(M)$ and put $\cT_t(x):=\exp_x(t\nabla\varphi(x))$,
$\mu_t=\rho_t\omega:=(\cT_t)_{\sharp}\mu$ for sufficiently small $t>0$.
Then the Jacobian equation $\rho=\rho_t(\cT_t)\bJ^{\omega}_t$ holds $\mu$-a.e.
(\cite[Theorem~5.2]{Oint}),
where $\bJ^{\omega}_t(x)$ stands for the Jacobian of
$D\cT_t(x):T_xM \lra T_{\cT_t(x)}M$ with respect to $\omega$.
Thus we obtain, as in the proof of Theorem~\ref{th:gf},
\[ H_m(\mu_t|\nu)=H_m(\mu|\nu) +\frac{1}{m(m-1)} \int_M
 \big[ \rho^{m-1} \{ (\bJ^{\omega}_t)^{1-m}-1 \}
 +m\{ \sigma^{m-1}-\sigma(\cT_t)^{m-1} \} \big] \,d\mu. \]
We observe, as $\rho^m \in H^1(M,\omega)$,
\begin{align*}
&\lim_{t \to 0} \int_M \frac{(\bJ^{\omega}_t)^{1-m}-1}{t} \rho^m \,d\omega
 = (1-m) \lim_{t \to 0} \int_M \frac{\bJ^{\omega}_t-1}{t} \rho^m \,d\omega \\
&=(1-m) \lim_{t \to 0} \int_M \frac{\rho^m-\rho(\cT_t)^m}{t} \bJ^{\omega}_t \,d\omega
 = (m-1) \int_M D(\rho^m)(\nabla\varphi) \,d\omega \\
&=m \int_M D(\rho^{m-1})(\nabla\varphi) \,d\mu,
\end{align*}
and hence
\[ \lim_{t \to 0}\frac{H_m(\mu|\nu)-H_m(\mu_t|\nu)}{t}
 =\int_M D\bigg( \frac{\rho^{m-1}-\sigma^{m-1}}{1-m} \bigg)
 (\nabla\varphi) \,d\mu. \]
This yields
\[ \nabla_W[-H_m(\cdot|\nu)](\mu)
 =\nabla\bigg( \frac{\rho^{m-1}-\sigma^{m-1}}{1-m} \bigg) \]
provided that $\rho^{m-1}-\sigma^{m-1} \in H^1(M,\mu)$.
Suppose $\rho^{m-1}-\sigma^{m-1} \not\in H^1(M,\mu)$.
Note that $\rho^{m-1}-\sigma^{m-1} \in L^2(M,\mu)$ since $\rho^m \in L^2(M,\omega)$
and $M$ is compact, thus we find $F(\nabla(\rho^{m-1}-\sigma^{m-1})) \not\in L^2(M,\mu)$.
Therefore we obtain
\[ \limsup_{\tilde{\mu} \to \mu}
 \frac{H_m(\mu|\nu)-H_m(\tilde{\mu}|\nu)}{W_2(\mu,\tilde{\mu})} =\infty \]
by approximating $\rho^{m-1}-\sigma^{m-1}$ with $\phi \in C^{\infty}(M)$ and
choosing $\varphi=\phi/(1-m)$.
Hence $H_m(\cdot|\nu)$ is not differentiable at $\mu$.
$\qedd$
\end{proof}

\begin{theorem}\label{th:Fgf}
Let $(\mu_t)_{t \in [0,\infty)} \subset \cP_{\ac}(M,\omega)$ be a continuous curve that is
locally Lipschitz on $(0,\infty)$, and assume that $\mu_t=\rho_t\omega$
satisfies $\rho_t^m \in H^1(M,\omega)$ as well as
$\rho_t^{m-1}-\sigma^{m-1} \in H^1(M,\mu_t)$ for a.e.\ $t \in (0,\infty)$.
Then we have
\[ \dot{\mu}_t=\nabla_W[-H_m(\cdot|\nu)](\mu_t) \]
at a.e.\ $t \in (0,\infty)$ if and only if $(\rho_t)_{t \in [0,\infty)}$ is
a weak solution to the reverse porous medium $($or fast diffusion$)$ equation of the form
\begin{equation}\label{eq:rpme}
\frac{\del \rho}{\del t} =-\div_{\omega}\bigg[ \rho\nabla
 \bigg( \frac{\rho^{m-1}-\sigma^{m-1}}{1-m} \bigg) \bigg].
\end{equation}
\end{theorem}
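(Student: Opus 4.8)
The plan is to reduce the equivalence to the continuity-equation description of the tangent vector field of a locally Lipschitz curve in $(\cP(M),W_2)$, combined with the identification of $\nabla_W[-H_m(\cdot|\nu)]$ furnished by Proposition~\ref{pr:Fgf}. First I would record two ingredients. Since $(\mu_t)_{t\in[0,\infty)}$ is continuous and locally Lipschitz on $(0,\infty)$, the theory recalled before Definition~\ref{df:Fgf} (\cite[Theorem~8.3.1]{AGS}, \cite[Theorem~7.3]{OS}) supplies a Borel tangent vector field $\dot\mu_t=\Phi(t,\cdot)\in T_{\mu_t}\cP$, which is the unique element of $T_{\mu_t}\cP$ satisfying the continuity equation $\partial_t\mu_t+\div(\Phi_t\mu_t)=0$ in the weak form \eqref{eq:ceq}. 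Secondly, the hypotheses $\rho_t^m\in H^1(M,\omega)$ and $\rho_t^{m-1}-\sigma^{m-1}\in H^1(M,\mu_t)$ for a.e.\ $t$ place us in the differentiable case of Proposition~\ref{pr:Fgf}, giving $\nabla_W[-H_m(\cdot|\nu)](\mu_t)=\nabla\bigl((\rho_t^{m-1}-\sigma^{m-1})/(1-m)\bigr)$, an element of $T_{\mu_t}\cP$ for a.e.\ $t$.

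Granting these, both implications are essentially formal. For the ``only if'' direction, if $\dot\mu_t=\nabla_W[-H_m(\cdot|\nu)](\mu_t)$ for a.e.\ $t$, then inserting $\Phi_t=\nabla\bigl((\rho_t^{m-1}-\sigma^{m-1})/(1-m)\bigr)$ into \eqref{eq:ceq} and integrating by parts in the space variable, using $\int_M D\phi_t(\nabla h)\,d\mu_t=-\int_M\phi_t\,\div_{\omega}(\rho_t\nabla h)\,d\omega$ with $h=(\rho_t^{m-1}-\sigma^{m-1})/(1-m)$, produces precisely the distributional formulation of \eqref{eq:rpme} (in the form analogous to \eqref{eq:wpme}). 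Conversely, if $(\rho_t)$ is a weak solution of \eqref{eq:rpme}, then the gradient field $V_t:=\nabla\bigl((\rho_t^{m-1}-\sigma^{m-1})/(1-m)\bigr)\in T_{\mu_t}\cP$ satisfies the very same continuity equation \eqref{eq:ceq} along $(\mu_t)$; by the uniqueness of the tangent vector field inside $T_{\mu_t}\cP$ we get $\dot\mu_t=V_t$, and then $V_t=\nabla_W[-H_m(\cdot|\nu)](\mu_t)$ by Proposition~\ref{pr:Fgf}.

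The hard part will be matching integrability classes and making the uniqueness argument rigorous in the Finsler setting. The continuity equation \eqref{eq:ceq} as stated asks for $F(\Phi)\in L^\infty_{\loc}(I\times M,d\mu_t\,dt)$, whereas $\rho_t^{m-1}-\sigma^{m-1}\in H^1(M,\mu_t)$ only yields $F(V_t)\in L^2(M,\mu_t)$; I would reconcile this either by working with the $L^2$-version of the continuity equation and of $T_{\mu_t}\cP$ as in \cite[Chapter~8]{AGS} and \cite[Section~7]{OS}, or by exploiting that local Lipschitz continuity of $(\mu_t)$ bounds the metric speed, hence $F_W(\mu_t,\dot\mu_t)$, locally uniformly in $t$, which forces $V_t$ to inherit the required local boundedness once the identification is made. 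One must also check measurability in $t$ of $t\mapsto V_t$ and keep track of the ``for a.e.\ $t$'' qualifiers, and---most importantly---verify that the nonlinearity of the Finsler gradient does not spoil uniqueness of the tangent vector field; this last point is exactly what \cite[Section~7]{OS} secures, since $T_{\mu_t}\cP$ is built there as a closure of gradients and \eqref{eq:ceq} singles out its element uniquely.
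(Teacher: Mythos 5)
Your proposal is correct and follows essentially the same route as the paper: identify $\nabla_W[-H_m(\cdot|\nu)](\mu_t)$ via Proposition~\ref{pr:Fgf}, and then translate both implications through the continuity equation \eqref{eq:ceq}, with the converse resting on the uniqueness of the tangent vector field in $T_{\mu_t}\cP$. You are in fact slightly more explicit than the paper about that uniqueness step and the $L^2$ versus $L^\infty_{\loc}$ integrability bookkeeping, which the paper leaves to the cited results of \cite{AGS} and \cite{OS}.
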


\begin{proof}
If $\dot{\mu}_t=\nabla_W[-H_m(\cdot|\nu)](\mu_t)$ holds for a.e.\ $t$, then
Proposition~\ref{pr:Fgf} yields
\[ \dot{\mu}_t=\nabla\bigg( \frac{\rho_t^{m-1}-\sigma^{m-1}}{1-m} \bigg)
 \qquad {\rm a.e.}\ t. \]
Thus it follows from the continuity equation $(\ref{eq:ceq})$ that
\[ \int_0^{\infty} \int_M \frac{\del \phi_t}{\del t} \,d\mu_t dt
 =-\int_0^{\infty} \int_M D\phi_t \bigg[ \nabla\bigg( 
 \frac{\rho_t^{m-1}-\sigma^{m-1}}{1-m} \bigg) \bigg] \,d\mu_t dt \]
for all $\phi \in C_c^{\infty}((0,\infty) \times M)$.
Therefore $\rho_t$ weakly solves $(\ref{eq:rpme})$.

Conversely, if $\rho_t$ is a weak solution to $(\ref{eq:rpme})$,
then the same calculation implies that
\[ \Phi_t=\nabla\bigg( \frac{\rho_t^{m-1}-\sigma^{m-1}}{1-m} \bigg) \]
satisfies the continuity equation $(\ref{eq:ceq})$.
Therefore Proposition~\ref{pr:Fgf} shows
$\dot{\mu}_t=\Phi_t=\nabla_W[-H_m(\cdot|\nu)](\mu_t)$.
$\qedd$
\end{proof}

We meant by the `reverse' porous medium (or fast diffusion) equation the equation
with respect to the {\it reverse} Finsler structure $\overleftarrow{F}(v):=F(-v)$.
As the gradient vector for $\overleftarrow{F}$ is written by
$\overleftarrow{\nabla}u=-\nabla(-u)$, $(\ref{eq:rpme})$ is indeed rewritten as
\[ \frac{\del \rho}{\del t} =\div_{\omega}\bigg[ \rho\overleftarrow{\nabla}
 \bigg( \frac{\rho^{m-1}-\sigma^{m-1}}{m-1} \bigg) \bigg]. \]

\end{document}